\documentclass[a4paper,11pt]{article}
\usepackage[utf8x]{inputenc}

\usepackage[title]{appendix}

\usepackage{a4wide}
\usepackage{amssymb}
\usepackage{amsmath}
\usepackage{amsthm}
\usepackage{mathtools}
\usepackage[mathscr]{euscript}
\usepackage[normalem]{ulem}
\usepackage[colorlinks=true]{hyperref}
\usepackage[usenames,dvipsnames]{xcolor}
\usepackage{appendix}

\usepackage{graphicx}

\numberwithin{equation}{section}

\theoremstyle{plain}

\newtheorem{Definition}{Definition}[section]

\newtheorem{Theorem}[Definition]{Theorem}
\newtheorem{Proposition}[Definition]{Proposition}
\newtheorem{Lemma}[Definition]{Lemma}
\newtheorem{Corollary}[Definition]{Corollary}

\theoremstyle{definition}
\newtheorem{Remark}[Definition]{Remark}
\newtheorem{Example}[Definition]{Example}

\newcommand{\R}{\mathbb R}
\newcommand{\N}{\mathbb N}

\DeclareMathOperator{\Geo}{Geo}
\DeclareMathOperator{\TGeo}{TGeo}

%mike:

\newcommand{\eps}{\varepsilon}
\usepackage[xcolor]{changebar}
\cbcolor{blue}

 %second fundamental form

%%%\newcommand{gameck}{{\gamma_{\eps_j}}} I removed the backslash to make overleaf nicer

%\newcommand{\miketodo}[1]{todo[size=\tiny,color=cyan]{#1}} %other colors available: e.g.:red, green, blue, yellow, lightgrey, teal, lime, ...
%\newcommand{\rotodo}[1]{todo[size=\tiny,color=teal]{#1}} %
%\newcommand{\btodo}[1]{todo[size=\tiny,color=yellow]{#1}}

\newcommand{\lm}[1]{\mathbb{L}^2(#1)}

%James

%Melanie
\usepackage[inline]{enumitem}
\newcommand{\enumlabelformat}{\roman}
\newcommand{\enumlabelfont}[1]{#1}
\newlength{\thelabelsep}
\setlength{\thelabelsep}{4pt}

\setlist{labelsep=\thelabelsep}
\setlist[enumerate,1]{font=\enumlabelfont,label=(\enumlabelformat*),leftmargin=2.5em}
\setlist[itemize]{leftmargin=2.5em,label=$-$}

\newcounter{inlineenum}
\renewcommand{\theinlineenum}{\enumlabelformat{inlineenum}}
% use this to enumerate in remarks etc.

%\newcommand{\R}{\mathbb{R}}
%providecommand{todo}[1]{\P \textbf{#1} \P}

\let\epsilon\varepsilon
\let\phi\varphi

%argam

%tobias

%clemens

\newcommand{\LpLSn}{Lorentzian pre-length space}

%felix
%widebar command (i prefer to use this for most things over bar)
\makeatletter
\let\save@mathaccent\mathaccent
\newcommand*\if@single[3]{%
  \setbox0\hbox{${\mathaccent"0362{#1}}^H$}%
  \setbox2\hbox{${\mathaccent"0362{\kern0pt#1}}^H$}%
  \ifdim\ht0=\ht2 #3\else #2\fi
  }
%The bar will be moved to the right by a half of \macc@kerna, which is computed by amsmath:
\newcommand*\rel@kern[1]{\kern#1\dimexpr\macc@kerna}
%If there's a superscript following the bar, then no negative kern may follow the bar;
%an additional {} makes sure that the superscript is high enough in this case:
\newcommand*\widebar[1]{\@ifnextchar^{{\wide@bar{#1}{0}}}{\wide@bar{#1}{1}}}
%Use a separate algorithm for single symbols:
\newcommand*\wide@bar[2]{\if@single{#1}{\wide@bar@{#1}{#2}{1}}{\wide@bar@{#1}{#2}{2}}}
\newcommand*\wide@bar@[3]{%
  \begingroup
  \def\mathaccent##1##2{%
%Enable nesting of accents:
    \let\mathaccent\save@mathaccent
%If there's more than a single symbol, use the first character instead (see below):
    \if#32 \let\macc@nucleus\first@char \fi
%Determine the italic correction:
    \setbox\z@\hbox{$\macc@style{\macc@nucleus}_{}$}%
    \setbox\tw@\hbox{$\macc@style{\macc@nucleus}{}_{}$}%
    \dimen@\wd\tw@
    \advance\dimen@-\wd\z@
%Now \dimen@ is the italic correction of the symbol.
    \divide\dimen@ 3
    \@tempdima\wd\tw@
    \advance\@tempdima-\scriptspace
%Now \@tempdima is the width of the symbol.
    \divide\@tempdima 10
    \advance\dimen@-\@tempdima
%Now \dimen@ = (italic correction / 3) - (Breite / 10)
    \ifdim\dimen@>\z@ \dimen@0pt\fi
%The bar will be shortened in the case \dimen@<0 !
    \rel@kern{0.6}\kern-\dimen@
    \if#31
      \overline{\rel@kern{-0.6}\kern\dimen@\macc@nucleus\rel@kern{0.4}\kern\dimen@}%
      \advance\dimen@0.4\dimexpr\macc@kerna
%Place the combined final kern (-\dimen@) if it is >0 or if a superscript follows:
      \let\final@kern#2%
      \ifdim\dimen@<\z@ \let\final@kern1\fi
      \if\final@kern1 \kern-\dimen@\fi
    \else
      \overline{\rel@kern{-0.6}\kern\dimen@#1}%
    \fi
  }%
  \macc@depth\@ne
  \let\math@bgroup\@empty \let\math@egroup\macc@set@skewchar
  \mathsurround\z@ \frozen@everymath{\mathgroup\macc@group\relax}%
  \macc@set@skewchar\relax
  \let\mathaccentV\macc@nested@a
%The following initialises \macc@kerna and calls \mathaccent:
  \if#31
    \macc@nested@a\relax111{#1}%
  \else
%If the argument consists of more than one symbol, and if the first token is
%a letter, use that letter for the computations:
    \def\gobble@till@marker##1\endmarker{}%
    \futurelet\first@char\gobble@till@marker#1\endmarker
    \ifcat\noexpand\first@char A\else
      \def\first@char{}%
    \fi
    \macc@nested@a\relax111{\first@char}%
  \fi
  \endgroup
}
\makeatother
%comparison points

%other comparison stuff

\newcommand\LpLSmaths{\ensuremath{(X, d, \ll, \leq, \tau)}}

\newcommand{\g}{\gamma}
\newcommand{\s}{\sigma}
\newcommand{\vp}{\varphi}
\newcommand{\ps}{\psi}
\newcommand{\hvp}{\hat{\varphi}}
\newcommand{\hps}{\hat{\psi}}

%----- End felix commands -----

\title{Timelike conjugate points in Lorentzian length spaces}

\author{James D.E.\ Grant\footnote{School of Mathematics and Physics, University of Surrey, UK, j.grant@surrey.ac.uk}\\
Michael Kunzinger\footnote{Department of Mathematics, University of Vienna, Oskar-Morgenstern-Platz 1, 1090 Wien, Austria, \newline michael.kunzinger@univie.ac.at, b.schinnerl@gmail.com, roland.steinbauer@univie.ac.at},\\
Argam Ohanyan\footnote{Department of Mathematics, University of Toronto, 45 St.\ George Street, M5S 2E5 Toronto, Ontario, Canada, argam.ohanyan@utoronto.ca},\\
Yasmin Schinnerl${}^\dagger$,\\
Roland Steinbauer${}^\dagger$
}

\begin{document}

\date{\today}

%\date{Received: date /Accepted: date}

\maketitle

\begin{abstract}

We study notions of conjugate points along timelike geodesics in the synthetic setting of Lorentzian (pre-)length spaces, inspired by earlier work for metric spaces by Shankar--Sormani. After preliminary considerations on convergence of timelike and causal geodesics, we introduce and compare one-sided, symmetric, unreachable and ultimate conjugate points along timelike geodesics. We show that all such notions are compatible with the usual one in the smooth (strongly causal) spacetime setting. As applications, we prove a timelike Rauch comparison theorem, as well as a result closely related to the recently established Lorentzian Cartan--Hadamard theorem by Er\"{o}s--Gieger. In the appendix, we give a detailed treatment of the Fr\'{e}chet distance on the space of non-stopping curves up to reparametrization, a technical tool used throughout the paper.

\vspace{1em}

\noindent
\emph{Keywords:} Timelike conjugate point, geodesic, timelike Rauch comparison
\medskip

\noindent
\emph{MSC2020:} 53B30, 53C22, 53C23

\end{abstract}

\tableofcontents

\section{Introduction}\label{section: introduction}

Lorentzian (pre-)length spaces were introduced by Kunzinger--S\"{a}mann~\cite{kunzinger2018lorentzian} as a way to study Lorentzian and spacetime geometry using only the most essential structures, akin to metric geometry in positive definite signature. Since its introduction, the synthetic theory of spacetimes has experienced rapid development, and is a very active field of research today.

Geodesics (i.e., local distance realizers) in Lorentzian pre-length spaces were introduced and studied by Grant--Kunzinger--S\"{a}mann~\cite{grant2019inextendibility}. This notion turned out to be relevant in the study of hyperbolic angles by Beran--S\"{a}mann~\cite{beran2022angles} and the recently established synthetic version of the sectional curvature variant of Bartnik's splitting conjecture by Flores--Herrera--Solis~\cite{flores2024low}.

In this article, we are interested in studying notions of conjugate points along timelike geodesics in Lorentzian (pre-)length spaces, 
following the fundamental contribution of Shankar--Sormani~\cite{shankar2009conjugate} on conjugate points in metric spaces. (See also the earlier work of Rinow~\cite{rinow1961innere}.) In the smooth Riemannian setting, Warner's detailed study~\cite{warner1965conjugate} of the conjugate locus sheds light on the exact behavior of the exponential map and the type of singularities that can occur. The results of Warner were extended to semi-Riemannian manifolds by Szeghy~\cite{szeghy2008conjugate}.

In Lorentzian and spacetime geometry, conjugate points are fundamental in the study of various comparison-geometric results, such as the singularity theorems of Hawking and Penrose. Our aim in this paper is to initiate the study of conjugate points in the synthetic setting for Lorentzian geometry, i.e. Lorentzian pre-length spaces. 

Our article is structured as follows: In Section~\ref{Section: Geodesics}, we first recall some basics on Lorentzian pre-length spaces used throughout (Subsection~\ref{Subsection: preliminaries on LpLS}). We then study geodesics and their convergence properties in Subsection~\ref{subsection: convergenceofgeodesics}. Due to the lack of arclength parametrization in the generality that we work with, we need to consider equivalence classes of geodesics up to suitable reparametrizations. To this end, in Appendix~\ref{Section: appendix frechet}, we give a treatment of the~\emph{Fr\'{e}chet distance\/} on such equivalence classes of non-stopping curves on a metric spaces that is suitable for our purposes. Our main result in this context is that the set of (equivalence classes of) causal geodesics $\Geo(X)$ is closed in the set of non-stopping curves with respect to the Fr\'{e}chet distance (see Proposition~\ref{Proposition: TopologyofGeoX}). 

In Section~\ref{Section: Timelikeconjugatepoints}, following the terminology of Shankar--Sormani~\cite{shankar2009conjugate}, we introduce and study various notions of conjugate points along timelike geodesics: One-sided and symmetric conjugate points in Subsection~\ref{Subsection: symmetric conjugate points}, families of timelike geodesics in Subsection~\ref{subsection: Families of timelike geodesics}, unreachable and ultimate conjugate points in Subsection~\ref{Subsection: unreachable and ultimate}, as well as some notions of cut points in Subsection~\ref{Subsection: causalcutloci}. We rely on Szeghy's~\cite{szeghy2008conjugate} generalizations of Warner's~\cite{warner1965conjugate} results to show compatibility of these notions with the smooth (strongly causal) spacetime setting.

Section~\ref{Section: applications} deals with two applications of these notions: a timelike Rauch comparison theorem (Theorem~\ref{Theorem: Timelike Rauch}) and a result similar to the Cartan--Hadamard Theorem (Theorem~\ref{Theorem: cartan hadamard adjacent}), which establishes the nonexistence of a certain kind of conjugate points along sufficiently short geodesics in spaces with upper timelike curvature bounds. The main tools that we use (aside from those that we develop here) is the stacking principle of Beran--Ohanyan--Rott--Solis~\cite{beran2023splitting} and the synthetic Lorentzian Cartan--Hadamard theorem of Er\"{o}s--Gieger~\cite{Eroes-Gieger25}.

Finally, in Section~\ref{section: Outlook}, we give an outlook on related avenues of future research.

\section{Geodesics in Lorentzian pre-length spaces}
\label{Section: Geodesics}

\subsection{Preliminaries on Lorentzian pre-length spaces}
\label{Subsection: preliminaries on LpLS}

In this section, we collect a few basics on Lorentzian (pre-)length spaces that will be required throughout. We refer to Kunzinger--S\"{a}mann~\cite{kunzinger2018lorentzian} for full details.

\begin{Definition}[Lorentzian (pre-)length spaces]
A~\emph{Lorentzian pre-length space} is a quintuple $(X,d,\ll,\leq,\tau)$, where
\begin{enumerate}
    \item $(X,d)$ is a metric space; 
    \item $\ll, \leq$ are transitive relations (the~\emph{chronological\/} resp.\ \emph{causal relations}), $\leq$ is in addition reflexive and contains $\ll$; 
    \item $\tau \colon X^2 \to [0,\infty]$ is a lower semicontinuous function that vanishes outside of the causal relation $\leq$, the timelike relation $\ll$ coincides with $\{\tau > 0\}$ and for all $x \leq y \leq z$,
    \begin{equation}
        \tau(x,z) \geq \tau(x,y) + \tau(y,z).
    \end{equation}
\end{enumerate}
\end{Definition}

We employ standard notation $I^\pm(A), J^\pm(A)$ for chronological resp.\ causal futures/pasts of a subset $A \subseteq X$. We write $I(A,B) \coloneqq I^+(A) \cap I^-(B)$, similarly $J(A,B)$, for $A, B \subseteq X$. A subset $C \subseteq X$ is said to be~\emph{causally convex\/} if $J(x,y) \subseteq C$ whenever $x,y \in C$. A Lorentzian pre-length space is~\emph{chronological\/} if $\ll$ is irreflexive and~\emph{causal\/} if $\leq$ is a partial order.

\begin{Definition}[Timelike/causal curves and $\tau$-length]
\label{def:TLcausalcurves}
A~\emph{future-directed timelike (resp.\ causal) curve\/} (past-directed curves are defined analogously) in a Lorentzian pre-length space $X$ is a nowhere constant map $\gamma \colon I=[a,b]\to X$ (i.e., $\gamma$ is not constant on any non-trivial subinterval of $I$) that
is Lipschitz (w.r.t.\ $d$) and such that for all $t_1, t_2 \in I$ with $t_1<t_2$ we have $\gamma(t_1) \ll \gamma(t_2)$ (resp.) $\gamma(t_1)\le \gamma(t_2)$ (causal curves on unbounded intervals can be defined analogously). Its~\emph{$\tau$-length\/} is defined as 
\[
L_\tau(\gamma) = \inf \left\{ \sum_{i=0}^{N-1} \tau(\gamma(t_i),\gamma(t_{i+1})) \colon 
N\in \N,\ a=t_0 < t_1 < \dots < t_N = b \right\}.
\]
A reparametrization of a future-directed causal curve $\gamma \colon [a,b] \to X$ is a future-directed causal curve $\lambda \colon [c,d] \to X$ such that $\gamma=\lambda\circ \varphi$, where $\varphi \colon [a,b]\to [c,d]$ is surjective and strictly monotonically increasing (hence also continuous). A future-directed causal curve $\gamma \colon [a,b] \to X$ is called a~\emph{maximizer\/} if $L_\tau(\gamma) = \tau(\gamma(a),\gamma(b))$.
\end{Definition}

In particular, the reparametrized curve $\lambda\circ\varphi$ above is required to itself be Lipschitz. The $\tau$-length of a causal curve is invariant under reparametrizations (\cite[Lem.\ 2.18]{kunzinger2018lorentzian}). We note that causal curves were defined in~\cite{kunzinger2018lorentzian} as non-constant maps rather than nowhere constant. We choose the current convention for two main reasons, one being that the restriction of a causal curve to any non-trivial subinterval of its domain should again be causal, and the other that causal curves in the standard smooth spacetime setting are indeed nowhere constant. The Fr\'{e}chet distance as detailed in Appendix~\ref{Section: appendix frechet} then applies to causal curves.

We also note that all causal curves are assumed to be future-directed unless explicitly stated otherwise.

\begin{Definition}[Model spaces]
\label{Def: model spaces}
For integers $0 \le m \le n$, denote by $\mathbb{R}_{m}^{n}$ the vector space $\mathbb{R}^{n}$ with the inner product $b(v,w) \coloneqq -\sum_{i=1}^{m}v_{i}w_{i} + \sum_{i=m+1}^{n}v_{i}w_{i},$ where $v=(v_1, \dots, v_n)$ and $w=(w_1, \dots ,w_n)$.\par
Let $K \in \mathbb{R}$. The~\emph{Lorentzian $K$-planes\/} or the~\emph{comparison spaces of constant curvature $K$\/} are the following:
\begin{itemize}
    \item For $K>0$, $\mathbb{L}^{2}(K)$ is the universal cover of $\left\{ v \in \mathbb{R}_{1}^{3} : b(v,v) = \frac{1}{K^{2}} \right\}$.
    \item For $K<0$, $\mathbb{L}^{2}(K)$ is the universal cover of $\left\{v \in \mathbb{R}_{2}^{3} : b(v,v) = -\frac{1}{K^{2}} \right\}$.
    \item For $K=0$, $\mathbb{L}^{2}(0) \coloneqq \mathbb{R}_{1}^{2}$ is the two-dimensional Minkowski spacetime.
\end{itemize}
The timelike diameter of $\lm{K}$ is $D_K = \frac{\pi}{\sqrt{-K}}$ for $K < 0$ and
$D_K = \infty$ for $K \ge 0$. We will denote the time separation of $\mathbb{L}^2(K)$ by $\bar{\tau}$.
\end{Definition}

\begin{Definition}[Alexandrov topology, strong causality]
The~\emph{Alexandrov topology\/} on a Lorentzian pre-length space $X$ is defined as the topology generated by the subbase
\begin{equation}
    \{I(x,y) : x,y \in X\}.
\end{equation}
We say $X$ is~\emph{strongly causal\/} if the Alexandrov topology agrees with the metric topology on $X$.
\end{Definition}

Following~\cite{BKR24}, we call a Lorentzian pre-length space~\emph{regular\/} if every maximizer between two timelike related points is timelike, i.e., contains no null subsegment. Moreover, a Lorentzian pre-length space is called~\emph{causally path-connected\/} if any two distinct causally related points are joined by a causal curve and any two timelike related points by a timelike curve.

\begin{Remark}[Localizability]
A~\emph{localizable\/} Lorentzian pre-length space is a Lorentzian pre-length space equipped with well-behaved local time separation functions, see~\cite[Def.\ 3.16]{kunzinger2018lorentzian} for the precise definition. It is called~\emph{regularly localizable\/} if the maximizers w.r.t.\ the local time separations (which exist by definition of localizability) are required to be  
timelike if they connect timelike related points. Regularly localizable Lorentzian pre-length spaces are regular, cf.\ \cite[Thm.\ 3.18]{kunzinger2018lorentzian}.
\end{Remark}

\begin{Proposition}[Upper semicontinuity of $L_\tau$, Prop.\ 3.17 in~\cite{kunzinger2018lorentzian}]
Let $X$ be a localizable and strongly causal Lorentzian pre-length space. Then $L_\tau$ is upper semicontinuous in the following sense: If $\gamma_n \colon [a,b] \to X$ is a sequence of causal curves converging $d$-uniformly to a causal curve $\gamma \colon [a,b] \to X$, then
\begin{equation}
    L_\tau(\gamma) \geq \limsup_{n \to \infty} L_\tau(\gamma_n).
\end{equation}
\end{Proposition}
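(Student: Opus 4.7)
The goal is to estimate $L_\tau(\gamma_n)$ from above using local time separations on a partition subordinate to a localizing cover, pass to the limit via continuity of these local functions, and then approximate $L_\tau(\gamma)$ by refining the partition.

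First, I would cover the compact image $\gamma([a,b])$ by finitely many localizing neighborhoods $\Omega_1,\ldots,\Omega_k$ with associated continuous local time separations $\omega_1,\ldots,\omega_k$. From the definition of localizability, each $\omega_j$ is continuous on $\Omega_j \times \Omega_j$, satisfies $\omega_j \leq \tau$ there, and has the property that any causal curve $\mu \colon [c,d] \to \Omega_j$ obeys $L_\tau(\mu) \leq \omega_j(\mu(c),\mu(d))$ (since $\omega_j$ is realized by a maximizer within $\Omega_j$). Using uniform continuity of $\gamma$ together with a Lebesgue number argument, pick a partition $a = t_0 < t_1 < \cdots < t_N = b$ such that $\gamma([t_i,t_{i+1}]) \subseteq \Omega_{j(i)}$ for some index $j(i)$. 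Strong causality ensures such a cover is well-behaved in the sense that this subordination condition is meaningful; uniform $d$-convergence $\gamma_n \to \gamma$ then guarantees that for all sufficiently large $n$ we also have $\gamma_n([t_i,t_{i+1}]) \subseteq \Omega_{j(i)}$.

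Next, additivity of $L_\tau$ under concatenation combined with the local bound just mentioned gives, for such $n$,
\[
L_\tau(\gamma_n) \;=\; \sum_{i=0}^{N-1} L_\tau\bigl(\gamma_n|_{[t_i,t_{i+1}]}\bigr) \;\leq\; \sum_{i=0}^{N-1} \omega_{j(i)}\bigl(\gamma_n(t_i),\gamma_n(t_{i+1})\bigr).
\]
Taking $\limsup_{n\to\infty}$ and using continuity of each $\omega_{j(i)}$ together with $\omega_{j(i)} \leq \tau$ yields
\[
\limsup_{n\to\infty} L_\tau(\gamma_n) \;\leq\; \sum_{i=0}^{N-1} \omega_{j(i)}\bigl(\gamma(t_i),\gamma(t_{i+1})\bigr) \;\leq\; \sum_{i=0}^{N-1} \tau\bigl(\gamma(t_i),\gamma(t_{i+1})\bigr).
\]

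Finally, given $\varepsilon > 0$, I would refine the partition further so that $\sum_i \tau(\gamma(t_i),\gamma(t_{i+1})) < L_\tau(\gamma) + \varepsilon$: any partition can be refined to remain subordinate to the cover, and such refinements only decrease the $\tau$-sum by the reverse triangle inequality, so the infimum of subordinate sums already equals $L_\tau(\gamma)$. Since $\varepsilon$ is arbitrary, the desired inequality $\limsup_n L_\tau(\gamma_n) \leq L_\tau(\gamma)$ follows.

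The main subtlety I anticipate is mostly bookkeeping: correctly invoking the localizability axioms to obtain both the continuity of each $\omega_{j(i)}$ and its two comparisons with $\tau$ (namely $\omega_{j(i)} \leq \tau$ globally on $\Omega_{j(i)}^2$ and $L_\tau \leq \omega_{j(i)}$ of endpoints for curves inside $\Omega_{j(i)}$), and ensuring that the cofinality of subordinate partitions under refinement really does let one approximate the infimum defining $L_\tau(\gamma)$.
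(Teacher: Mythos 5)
The paper offers no proof of this proposition—it is cited directly as Proposition~3.17 of Kunzinger--S\"{a}mann~\cite{kunzinger2018lorentzian}—so there is no internal argument to compare against, and your write-up must be assessed on its own. Your reconstruction is correct and follows the expected route: cover the compact image $\gamma([a,b])$ by localizing neighborhoods $\Omega_j$; choose a partition fine enough that each $\gamma([t_i,t_{i+1}])$ sits inside some $\Omega_{j(i)}$ with a margin, so that uniform convergence forces $\gamma_n([t_i,t_{i+1}])\subseteq\Omega_{j(i)}$ for large $n$; use additivity of $L_\tau$ and the localizability property $L_\tau(\mu)\le\omega_{j(i)}(\mu(c),\mu(d))$ for causal $\mu$ inside $\Omega_{j(i)}$; pass to the limit by continuity of $\omega_{j(i)}$; bound by $\tau$ via $\omega_{j(i)}\le\tau|_{\Omega_{j(i)}^2}$; and finally note that subordinate partitions are cofinal under refinement and the $\tau$-sum is non-increasing under refinement, so the infimum over subordinate partitions already equals $L_\tau(\gamma)$. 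Each of these ingredients is exactly what~\cite[Def.~3.16]{kunzinger2018lorentzian} supplies, and the cofinality step closes the argument cleanly.

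Two small remarks. First, make the Lebesgue-number step slightly quantitative: you need not just $\gamma([t_i,t_{i+1}])\subseteq\Omega_{j(i)}$ but that a uniform $d$-neighborhood of each such arc lies in $\Omega_{j(i)}$, which is what lets uniform convergence push the $\gamma_n$-arcs into the same neighborhood; this follows from compactness of $\gamma([t_i,t_{i+1}])$ inside the open $\Omega_{j(i)}$, but it is worth saying. Second, your appeal to strong causality (``ensures such a cover is well-behaved'') is hand-waving, and your argument as written does not actually appear to use strong causality anywhere: all the estimates come from localizability alone. Either identify concretely where strong causality enters or state explicitly that the hypothesis is not invoked in this estimate; as it stands that sentence reads like a gap even though none exists.
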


For our definition of timelike (sectional) curvature bounds, we follow the conventions of Beran--Kunzinger--Rott~\cite[Def.\ 3.1]{BKR24}.

\begin{Definition}[Timelike curvature bounds]
We say a Lorentzian pre-length space $X$ has~\emph{timelike curvature bounded from above (resp.\ below)\/} by $K \in \R$ if every point has a neighborhood $U$ such that
\begin{enumerate}
    \item the set $U^2 \cap \tau^{-1}([0,D_K))$ is open in $X^2$ and the restriction of $\tau$ to this set is continuous and finite-valued;
    \item whenever $x \leq y$ with $\tau(x,y) < D_K$ and $x,y \in U$, there exists a maximizer in $U$ from $x$ to $y$;
    \item whenever $\Delta(x,y,z)$ is a timelike triangle in $U$ (i.e., $x \ll y \ll z$ with timelike maximizers running between them) and $\Delta(\bar{x},\bar{y},\bar{z})$ is the comparison triangle with equal side lengths in $\mathbb{L}^2(K)$, for any $p,q \in \Delta(x,y,z)$ and corresponding points $\bar{p},\bar{q} \in \Delta(\bar{x},\bar{y},\bar{z})$ (i.e., points with equal time separation to the endpoints of their sides as $p,q$), we have
    \begin{equation}
        \tau(p,q) \geq \bar{\tau}(\bar{p},\bar{q}) \quad (\text{resp.\ }\leq).
    \end{equation}
\end{enumerate}
\end{Definition}

We will mainly rely on consequences of timelike curvature bounds, specifically the stacking principle~\cite[Prop.\ 2.42]{beran2023splitting}, which describes how the stacking of a subdivision of comparison triangles stack in the model space, as well as the Lorentzian Cartan--Hadamard theorem and related results due to Er\"{o}s--Gieger~\cite{Eroes-Gieger25}. We refer to these works for details.

\subsection{Convergence of geodesics}\label{subsection: convergenceofgeodesics}

In this section, we establish results on the convergence of geodesics in Lorentzian (pre-)length spaces. We begin by recalling the notion of geodesic introduced in~\cite[Def.\ 1.7]{beran2022angles}: 
\begin{Definition}\label{def:geodesics}
A future-directed causal curve $\gamma \colon I \to X$ is a~\emph{geodesic\/} if for each $t \in I$ there
exists a neighborhood $[a,b]$ of $t$ (i.e., $a < t < b$, allowing for equality at the
endpoints of $I$) such that $\gamma|_{[a,b]}$ is a maximizer, i.e., $\tau(\gamma(a), \gamma(b)) =
L_\tau (\gamma|_{[a,b]})$.
\end{Definition}

Since the $\tau$-length of a causal curve $\gamma$ is parametrization-invariant, any reparametrization of a geodesic $\gamma$ is also a geodesic, and we shall henceforth consider geodesics as equivalence classes of reparametrizations. We emphasize that whenever we speak of "uniqueness" in the context of geodesics, we always mean "up to reparametrization".

\begin{Definition}[Finite $\tau$-measurability] A Lorentzian pre-length space $(X,d,\ll,\leq,\tau)$ is~\emph{finitely $\tau$-measurable\/} if every causal curve segment $\gamma \colon [a,b] \to X$ has finite $\tau$-length.
\end{Definition}

\begin{Lemma}
Let {\LpLSmaths} be a Lorentzian pre-length space that is localizable or has a timelike curvature bound (from above or below). Then it is finitely $\tau$-measurable.
\end{Lemma}
\begin{proof}
The first claim follows from the definition of localizability, cf.\ \cite[Def.\ 3.16]{kunzinger2018lorentzian} for the latter. On the other hand, if $X$ has a timelike curvature bound, then $\tau$ is continuous and finite-valued on the product with itself of a comparison neighborhood of any point in $X$. Covering a given causal curve segment with finitely many such neighborhoods establishes finite $\tau$-measurability.
\end{proof}

\begin{Remark} \label{Remark: geodesicsvssolutionsofgeodeq} 
In~\cite[Def.\ 4.1]{grant2019inextendibility}, geodesics were defined as future-directed timelike (resp.\ causal, null) curves $\gamma \colon I \to X$ on localizing Lorentzian pre-length spaces as follows: for every $t_0 \in I$ there exists a localizing neighborhood $\Omega$ of $\gamma(t_0)$ and a relatively closed interval $J \subseteq I$ with $t_0$ in its relative interior, such that $\gamma(J) \subseteq \Omega$ and $\gamma|_J$ is maximizing in $\Omega$ with respect to the local time separation function $\omega$. Thus, such curves are geodesics in the sense of Definition~\ref{def:geodesics} within each `local' Lorentzian pre-length space $\Omega$ as above. Moreover, in strongly causal localizable Lorentzian pre-length spaces both definitions agree by the proof of~\cite[Lem.\ 4.3]{grant2019inextendibility}.
\end{Remark}

\begin{Remark}[On geodesics in low causality]
The definition of causal geodesic we gave as a local $\tau$-maximizer becomes meaningless in very low causality. For example, consider the Lorentz cylinder $X = S^1 \times \R^n$ with the metric $-d\theta^2 + \sum_{i=1}^n (dx^i)^2$, then $\tau = +\infty$ on the timelike relation $\ll$. In particular, the $\tau$-length of every timelike curve segment is $+\infty$, hence every such curve is maximizing and thus (in particular) a geodesic.
\end{Remark}

An important class of examples of Lorentzian (pre-) length spaces is furnished by smooth manifolds $M$ equipped with a continuous and causally plain metric $g$ (cf.\ \cite[Prop.\ 5.8]{kunzinger2018lorentzian}). To compare the notion of geodesics in Lorentzian manifolds with that of Definition~\ref{def:geodesics}, let us now extend the validity of~\cite[Prop.\ 2.32]{kunzinger2018lorentzian}
from smooth Lorentzian metrics to a large class of continuous ones. 

\begin{Proposition}[$L_g$ vs.\ $L_\tau$]
\label{Proposition: Lg vs Ltau}
Let $(M,g)$ be a spacetime such that $g$ is continuous, causally plain and strongly causal. Then the $g$- and $\tau$-lengths of every causal curve $\gamma \colon [a, b] \to (M,g)$ agree:
\[
L_g(\gamma) = \int_a^b \sqrt{-g(\dot\gamma,\dot\gamma)} \,dt = L_\tau(\gamma).
\]
\end{Proposition}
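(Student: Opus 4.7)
The plan is to split the equality into the two inequalities $L_g(\gamma) \le L_\tau(\gamma)$ and $L_\tau(\gamma) \le L_g(\gamma)$. The first is essentially immediate from the definitions: for any partition $a = t_0 < \cdots < t_N = b$, each restriction $\gamma|_{[t_i, t_{i+1}]}$ is a $g$-causal curve from $\gamma(t_i)$ to $\gamma(t_{i+1})$, so by the definition of $\tau$ as the supremum of $L_g$ over $g$-causal curves, $L_g(\gamma|_{[t_i, t_{i+1}]}) \le \tau(\gamma(t_i), \gamma(t_{i+1}))$. Summing and using additivity of $L_g$ over partitions gives $L_g(\gamma) \le \sum_i \tau(\gamma(t_i), \gamma(t_{i+1}))$; passing to the infimum over partitions yields $L_g(\gamma) \le L_\tau(\gamma)$.

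For the reverse inequality, my plan is to reduce to the smooth case, i.e., \cite[Prop.\ 2.32]{kunzinger2018lorentzian}, via a Chru\'sciel--Grant--type regularization. Specifically, approximate $g$ by smooth Lorentzian metrics $\hat g_\varepsilon$ with strictly wider null cones ($g \prec \hat g_\varepsilon$) and $\hat g_\varepsilon \to g$ locally uniformly; a convenient local model is $\hat g_\varepsilon = g_\varepsilon - \varepsilon\, T^{\flat} \otimes T^{\flat}$, where $g_\varepsilon$ is a mollification of $g$ and $T$ is a local $g$-timelike vector field. This form guarantees the pointwise bound $-\hat g_\varepsilon(v,v) \ge -g(v,v)$ on $g$-causal vectors $v$, whence $L_{\hat g_\varepsilon}(\sigma) \ge L_g(\sigma)$ for every $g$-causal $\sigma$. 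Since $g$-causal curves are automatically $\hat g_\varepsilon$-causal, taking suprema in the definition of $\tau$ yields $\tau_g(x,y) \le \tau_{\hat g_\varepsilon}(x,y)$ for every $g$-causally related pair. Consequently, $L_{\tau_g}(\gamma) \le L_{\tau_{\hat g_\varepsilon}}(\gamma)$, and the smooth version applied to the smooth spacetime $(M, \hat g_\varepsilon)$ gives $L_{\tau_{\hat g_\varepsilon}}(\gamma) = L_{\hat g_\varepsilon}(\gamma)$. Letting $\varepsilon \to 0$ and invoking dominated convergence---valid since $\gamma$ is Lipschitz (hence $\dot\gamma$ is essentially bounded) and $\hat g_\varepsilon \to g$ uniformly on the compact image $\gamma([a,b])$---gives $L_{\hat g_\varepsilon}(\gamma) \to L_g(\gamma)$, closing the argument.

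The principal technical obstacle will be organizing the regularization so that all the required properties hold simultaneously: $\hat g_\varepsilon$ must be smooth, strictly wider-coned than $g$, convergent to $g$ in $C^0_{\mathrm{loc}}$, and sufficiently regular (in particular strongly causal in the relevant neighborhoods) for the smooth Prop.\ 2.32 to apply. If a global construction proves awkward, a natural workaround is to use compactness of $\gamma([a,b])$ together with strong causality of $g$ to cover the image by finitely many relatively compact neighborhoods in which a local regularization makes sense, apply the smooth version piecewise, and reassemble via the additivity of both $L_g$ and $L_\tau$. The causally plain hypothesis is crucial throughout, as it is precisely what excludes pathological cone behaviour (``bubbling'' of the inner vs.\ outer cones) and ensures that the wider-cone approximations converge to the correct causal structure of $g$.
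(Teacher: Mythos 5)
Your overall strategy coincides with the paper's: approximate $g$ by smooth wider-coned Lorentzian metrics and pass to the limit via the smooth case. The organization of the limit argument, however, is genuinely different. You split into $L_g(\gamma)\le L_\tau(\gamma)$ (proved directly from the definition of $\tau$ and superadditivity, a clean elementary step the paper does not make explicit) and $L_\tau(\gamma)\le L_g(\gamma)$ (via $\tau \le \tau_{\hat g_\varepsilon}$, the smooth identity for $\hat g_\varepsilon$, and dominated convergence $L_{\hat g_\varepsilon}(\gamma)\to L_g(\gamma)$). The paper instead proves $L_\tau(\gamma)=\lim_k L_{\tau_k}(\gamma)$ outright by a squeeze: it uses the monotonicity $\tau_k\ge\tau$ together with the \emph{locally uniform} convergence $\tau_k\to\tau$ from~\cite[Lem.~A.2]{McCann_Saemann22}, applied on a fixed partition of $\gamma$, to trap $L_{\tau_k}(\gamma)$ within $2\varepsilon$ of $L_\tau(\gamma)$. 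Your route has the advantage of requiring only the one-sided comparison $\tau\le\tau_{\hat g_\varepsilon}$ rather than the full locally uniform convergence of time separations; the paper's route avoids any discussion of the explicit form of the regularization by simply citing the off-the-shelf approximation lemma.

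Two points to tighten. First, the claim that the specific form $\hat g_\varepsilon = g_\varepsilon - \varepsilon\,T^\flat\otimes T^\flat$ \emph{guarantees} $-\hat g_\varepsilon(v,v)\ge -g(v,v)$ on $g$-causal $v$ is an overstatement as written: expanding gives $-\hat g_\varepsilon(v,v)+g(v,v) = \bigl(g(v,v)-g_\varepsilon(v,v)\bigr) + \varepsilon\, g(T,v)^2$, and the first parenthesis has no sign a priori; one needs to couple the shrinking parameter to the local uniform distance $\|g_\varepsilon-g\|$, which is exactly what the Chru\'sciel--Grant/McCann--S\"amann construction~\cite[Lem.~A.1]{McCann_Saemann22} does. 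You should cite that lemma rather than asserting the bound from the naive ansatz. Second, as you correctly flag, applying the smooth result~\cite[Prop.\ 2.32]{kunzinger2018lorentzian} to $(M,\hat g_\varepsilon)$ requires control on the causality of the approximating metric (wider cones can in principle destroy strong causality); your proposed workaround — localize along the compact image of $\gamma$ and use additivity of both length functionals — is a reasonable fix, though it is left at the sketch stage. The paper faces the same subtlety and likewise does not spell it out, so this is not a gap relative to the paper's own level of detail.
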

\begin{proof}
To prove this we employ a sequence of smooth Lorentzian metrics $g_k$ with wider lightcones than $g$, approximating $g$ as constructed in~\cite[Lem.\ A.1]{McCann_Saemann22}. Then for the corresponding time separation functions $\tau_k$ of $g_k$ we have that $\tau$ is the locally uniform and non-increasing limit of the $\tau_k$ (see~\cite[Lem.\ A.2]{McCann_Saemann22}). In particular, $L_{\tau_k}(\gamma) \ge L_\tau(\gamma)$ for all $k$. Let $\eps>0$ and choose a partition $P=\{a=t_0<\dots<t_{N}=b\}$ of $[a,b]$ such that
\[
L_\tau(\gamma) \ge \sum_{i=1}^N \tau(\gamma(t_{i-1}),\gamma(t_i)) - \eps.
\]
By uniform convergence, there exists some $k_0=k_0(\eps,P)$ such that, for all $k\ge k_0$ we have
\[
\sum_{i=1}^N \tau(\gamma(t_{i-1}),\gamma(t_i)) \ge \sum_{i=1}^N \tau_k(\gamma(t_{i-1}),\gamma(t_i)) -\eps.
\]
Altogether,
\[
L_{\tau_k}(\gamma) \ge L_\tau(\gamma) \ge \sum_{i=1}^N \tau_k(\gamma(t_{i-1}),\gamma(t_i)) - 2\eps 
\ge L_{\tau_k}(\gamma) -2\eps.
\]
Consequently, $L_\tau(\gamma) = \lim_{k\to\infty} L_{\tau_k}(\gamma) = \lim_{k\to \infty} L_{g_k}(\gamma) =L_g(\gamma)$.
\end{proof}

\begin{Remark}[Comparing notions of geodesics]
It follows from Proposition~\ref{Proposition: Lg vs Ltau} that (in strongly causal spaces) geodesics in the sense of Definition~\ref{def:geodesics} will coincide with geodesics
in the sense of solutions of the geodesic equation as long as the regularity of the metric is sufficient to allow for a satisfactory
solution theory of these ODEs and, additionally, as long as such solutions are indeed local maximizers of the $g$-length-functional.
This certainly is the case for $g\in C^{1,1}$ (cf.\ \cite{minguzzi2015convex} or~\cite{kunzinger2014regularisation}). However, 
the Hartmann--Wintner-type spacetime
$M \coloneqq \R \times (-1,1) \times \R^n$ with metric $g \coloneqq -dt^2 + dx^2 + (1 - |x|^{\lambda})g_n$, where $\lambda \in (1, 2)$ and $g_n$ is the Euclidean metric on $\R^n$, is 
an example of a $C^{1,\lambda-1}$-Lorentzian metric where solutions of the geodesic equations fail to even locally maximize the $g$-length, cf.\ \cite[Ex.\ 2.4]{samann2018geodesics}. In any case, whenever we compare synthetic notions to classical notions in spacetimes we will assume strong causality in order to have a compatible notion of local maximizers.
\end{Remark}

\begin{Lemma}[Local maximization and causal character] \ 
\label{Lemma: Geodesicshavecausalcharacter}
Let $(X,d,\ll,\leq,\tau)$ be a Lorentzian pre-length space. If $X$ is regular, then causal geodesics in $X$ have a causal character, i.e.\ they are either timelike or null.
\end{Lemma}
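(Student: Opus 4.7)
The plan is to combine the local maximizer property of geodesics with compactness of the domain and the regularity hypothesis to establish a dichotomy on each local piece of $\gamma$, and then propagate this character via push-up to the whole curve.

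Concretely, given a causal geodesic $\gamma \colon [a,b] \to X$, first I would construct a partition $a = r_0 < r_1 < \cdots < r_N = b$ with the property that each restriction $\gamma|_{[r_{i-1}, r_{i+1}]}$ is itself a maximizer. This is achieved by covering $[a,b]$ with the open intervals on which $\gamma$ is locally maximizing (guaranteed by Definition~\ref{def:geodesics}), extracting a finite subcover, and choosing the partition mesh smaller than half the corresponding Lebesgue number, so that each pair $[r_{i-1}, r_{i+1}]$ lies inside one of these local maximizer intervals. Throughout I would use the standard fact that any subinterval of a maximizer is again a maximizer, which is a direct consequence of additivity of $L_\tau$ and the reverse triangle inequality.

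On each sub-maximizer $\gamma|_{[r_{i-1}, r_{i+1}]}$ there is then a clean dichotomy: either $\gamma(r_{i-1}) \ll \gamma(r_{i+1})$, in which case regularity forces every pair of points in $[r_{i-1}, r_{i+1}]$ to be timelike related; or $\gamma(r_{i-1}) \not\ll \gamma(r_{i+1})$, which together with the maximizer property yields $L_\tau(\gamma|_{[r_{i-1}, r_{i+1}]}) = \tau(\gamma(r_{i-1}), \gamma(r_{i+1})) = 0$. The timelike case propagates to adjacent sub-maximizers: from $\gamma(r_i) \ll \gamma(r_{i+1}) \le \gamma(r_{i+2})$ and push-up (itself a consequence of the reverse triangle inequality for $\tau$) one obtains $\gamma(r_i) \ll \gamma(r_{i+2})$, so $\gamma|_{[r_i, r_{i+2}]}$ is also in the timelike case, and symmetrically for backward propagation. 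By induction, either every sub-maximizer is in the timelike case or every one is in the null case.

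In the timelike case I would conclude that $\gamma(s) \ll \gamma(t)$ for arbitrary $s < t$ in $[a,b]$ by chaining the known timelike relations at the partition points via push-up, so that $\gamma$ is a timelike curve. In the null case, additivity of $L_\tau$ over the partition forces $L_\tau(\gamma) = 0$, making $\gamma$ null. The main delicacy is the interplay between the partition construction -- ensuring that overlapping pairs $[r_{i-1}, r_{i+1}]$ are simultaneously maximizers -- and the careful application of regularity, which only yields information on maximizers whose endpoints are already known to be chronologically related.
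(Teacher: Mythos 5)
Your proof is correct and follows essentially the same strategy as the paper's: cover the domain by maximizer pieces, assign a causal character to each via regularity, and propagate that character through overlaps. The paper is slightly terser---it uses a partition into maximizer segments plus a maximizer neighbourhood around each partition point (so adjacent segments share a common sub-maximizer, which transfers the character), whereas you build overlapping maximizer intervals directly via a Lebesgue-number argument and propagate via push-up---but these are minor variants of one and the same argument.
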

\begin{proof} 
Given a causal geodesic $\gamma \colon [a, b] \to X$, choose a subdivision $a=t_0<\dots<t_N=b$ such that each $\gamma|_{[t_i,t_{i+1}]}$ is maximal,
and therefore has a causal character due to regularity of $X$. Since, for each $i$, there is a neighborhood of $t_i$ in which $\gamma$ is maximal,
and therefore has a causal character, any two consecutive segments of $\gamma$ have the same causal character.
\end{proof}

Before discussing the limit curve theorem and its implications for geodesics, we adapt some material from~\cite{hau2020causal} to our setting.

\begin{Definition}[Local relations]
Let $(X,d,\ll,\leq,\tau)$ be a Lorentzian pre-length space and $U \subseteq X$ open. We define the local timelike and causal relations on $U$ as follows: For $p,q \in U$ we say $p \ll_U q$ ($p \leq_U q$) if there exists a future-directed timelike (causal) curve $\gamma \colon [a, b] \to X$ with $\gamma(a) = p$, $\gamma(b) = q$ and $\gamma \left( [a, b] \right) \subseteq U$.
\end{Definition}

\begin{Lemma}
Let {\LpLSmaths} be a causally path-connected {\LpLSn}. Then, for any causally convex $V \subseteq X$, we have $\le_V \, = \left. \le \right|_{V \times V}$.
\end{Lemma}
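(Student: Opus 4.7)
The plan is to split the claimed equality into the two inclusions and dispose of the easy direction first. The inclusion $\le_V \sse \left.\le\right|_{V\times V}$ is immediate from the definitions: any future-directed causal curve $\gamma\colon[a,b]\to X$ whose image lies in $V$ and which connects $p$ to $q$ is, a fortiori, a future-directed causal curve in $X$ from $p$ to $q$, so it witnesses $p\le q$.

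For the reverse inclusion I fix $p,q\in V$ with $p\le q$, setting aside the diagonal case $p=q$ (either vacuous, since causal curves are nowhere constant by Definition~\ref{def:TLcausalcurves}, or handled by declaring $\le_V$ reflexive on $V$, consistently with the reflexivity of $\le$). Causal path-connectedness of $X$ then furnishes a future-directed causal curve $\gamma\colon[a,b]\to X$ with $\gamma(a)=p$ and $\gamma(b)=q$, and the heart of the argument is to verify that the image of $\gamma$ actually lies in $V$. For any $t\in[a,b]$, the causal character of $\gamma$ yields
\[
p=\gamma(a)\le \gamma(t)\le \gamma(b)=q,
\]
so $\gamma(t)\in J^+(p)\cap J^-(q)=J(p,q)$. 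Since $p,q\in V$ and $V$ is causally convex, $J(p,q)\sse V$, hence $\gamma([a,b])\sse V$ and $p\le_V q$.

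I do not foresee a genuine obstacle here: the proof is essentially an unpacking of definitions, with causal path-connectedness providing the curve and causal convexity trapping it inside $V$. The only delicate point is the reflexive case $p=q$, which must be handled by convention if the statement is to be read as an equality of relations on all of $V\times V$. It is also worth noting that the analogous statement for the local chronological relation $\ll_V$ is not claimed and would require the timelike half of causal path-connectedness together with the observation that $I^+(p)\cap I^-(q) \sse J(p,q) \sse V$; this would drop out of the same argument, but I would only invoke it if needed later.
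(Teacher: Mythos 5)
Your proposal is correct and follows essentially the same route as the paper: the forward inclusion is immediate from the definition of $\le_V$, and the reverse inclusion uses causal path-connectedness to produce a causal curve from $p$ to $q$, then causal convexity of $V$ to trap its image inside $V$. The one thing you add that the paper glosses over is the diagonal: since causal curves are nowhere constant and causal path-connectedness only supplies a curve between \emph{distinct} causally related points, $\le_V$ as literally defined omits the diagonal of $V\times V$ whereas $\le|_{V\times V}$ contains it (as $\le$ is reflexive). This is a genuine, if pedantic, mismatch that the paper's proof silently ignores; your suggestion to read $\le_V$ as its reflexive closure (or to restrict attention to $p\ne q$) is the right repair, and it is worth flagging.
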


\begin{proof}
If $p, q \in V$ with $p \le_V q$, then there exists a future-directed, causal curve from $p$ to $q$, hence $p \le q$, so $(p, q) \in \left. \le \right|_{V \times V}$.

Conversely, let $p, q \in V$ with $p \le q$. Since {\LpLSmaths} is causally path-connected, there exists a causal curve $\gamma \colon [a, b] \to X$ with $\gamma(a) = p$, $\gamma(b) = q$. Since $V$ is causally convex, $\gamma \left( [a, b] \right) \subseteq V$. Therefore, $p \le_V q$.
\end{proof}

\begin{Definition}[Local weak causal closedness]
A set $U \subseteq X$ is~\emph{weakly causally closed\/} if, for all sequences $\left( p_n \right)_n$ and $\left( q_n \right)_n$ in $U$ such that $p_n \le_U q_n$, for all $n \in \N$ with $p_n \to p \in U$ and $q_n \to q \in U$, one has $p \le_U q$. A Lorentzian pre-length space is~\emph{locally weakly causally closed\/} if every point has a weakly causally closed neighborhood.
\end{Definition}

Note that, unlike~\cite[Definitions~2.18 \& 2.19]{hau2020causal}, we do not require that {\LpLSmaths} be causally path-connected in our definition of $\le_U$ and, hence, of weakly causally closed. However, we have the following slightly sharpened version of~\cite[Prop.~2.21]{hau2020causal}:

\begin{Lemma}
\label{Lemma:2.21}
Let $\LpLSmaths$ be a strongly causal Lorentzian pre-length space with the property that, for any causally convex $V \subseteq X$, we have $\le_V \, = \left. \le \right|_{V \times V}$. Then, the following are equivalent: 
\begin{enumerate}
\item\label{221i} $\LpLSmaths$ is locally causally closed;%
\footnote{Recall from~\cite[Definition~3.4]{kunzinger2018lorentzian} that if {\LpLSmaths} is a {\LpLSn}, then a subset $U \subseteq X$ is~\emph{causally closed\/} if, for all sequences $(p_n)_n$, $(q_n)_n$ in $U$ such that $p_n \le q_n$ for all $n$  with $p_n \to p \in \bar{U}$, $q_n \to q \in \bar{U}$, we have $p \le q$. A {\LpLSn} is~\emph{locally causally closed\/} if every point has a causally closed neighborhood.}
\item\label{221ii} $\LpLSmaths$ is locally weakly causally closed.
\end{enumerate}
\end{Lemma}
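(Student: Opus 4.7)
The approach is to use strong causality to shrink the given neighborhood, in each direction, to a causally convex open one of Alexandrov form $I(a,b)$, on which the standing hypothesis $\le_V = \le|_{V\times V}$ lets one freely convert between "$\le$" and "$\le_V$".

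As a preliminary I would record that in a strongly causal Lorentzian pre-length space the Alexandrov sets $I(a,b) = I^+(a) \cap I^-(b)$ form a neighborhood basis of open causally convex sets at every point: openness is the content of strong causality, and causal convexity follows from the transitivity of $\ll$ against $\le$ (if $a \ll p \le r \le q \ll b$ then $a \ll r \ll b$). In particular, every such $V$ satisfies the hypothesis $\le_V = \le|_{V \times V}$.

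For (i) $\Rightarrow$ (ii): given a causally closed neighborhood $U$ of a point $x$, I would pick from the Alexandrov basis an open causally convex $V \subseteq U$ containing $x$. To see that $V$ is weakly causally closed, take $(p_n),(q_n)$ in $V$ with $p_n \le_V q_n$, $p_n \to p \in V$, $q_n \to q \in V$. The relation $p_n \le_V q_n$ certainly implies $p_n \le q_n$, so causal closedness of $U$ (applied to $p_n, q_n \in U$ with limits in $V \subseteq \bar U$) yields $p \le q$, and the hypothesis upgrades this to $p \le_V q$.

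For (ii) $\Rightarrow$ (i): given a weakly causally closed neighborhood $U$ of $x$, choose $\delta > 0$ with $B(x,\delta) \subseteq U$ and then take $a \ll x \ll b$ so that $V \coloneqq I(a,b) \subseteq B(x,\delta/2)$; this ensures $\bar V \subseteq U$. To see that $V$ is causally closed, take $(p_n),(q_n)$ in $V$ with $p_n \le q_n$, $p_n \to p \in \bar V$, $q_n \to q \in \bar V$. By causal convexity of $V$ together with the hypothesis, $p_n \le_V q_n$, and hence $p_n \le_U q_n$. Since $\bar V \subseteq U$, the limits lie in $U$, so weak causal closedness of $U$ yields $p \le_U q$, and in particular $p \le q$.

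The only step I expect to require care is the choice of $V$ in (ii) $\Rightarrow$ (i), where I must arrange $\bar V \subseteq U$ (not merely $V \subseteq U$): weak causal closedness controls only limits lying in $U$ itself, whereas the conclusion about causal closedness of $V$ must handle limits anywhere in $\bar V$. The combination of strong causality with a small metric ball inside $U$ takes care of this asymmetry; the rest is bookkeeping with the hypothesis $\le_V = \le|_{V\times V}$.
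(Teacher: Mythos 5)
Your argument mirrors the paper's proof closely: in each direction you shrink $U$ to an open causally convex $V$ (arranging additionally $\overline{V} \subseteq U$ in the (ii) $\Rightarrow$ (i) direction, which you correctly identify as the crux), and then use the standing hypothesis $\le_V \,=\, \le|_{V\times V}$ to translate between $\le$ and the local relation. The only imprecision is in the preliminary claim that single Alexandrov sets $I(a,b)$ form a neighborhood basis at every point. The paper declares $\{I(a,b) : a,b\in X\}$ a \emph{subbase} of the Alexandrov topology, so strong causality only ensures that finite intersections $\bigcap_i I(a_i,b_i)$ form a neighborhood base at each point; in a general Lorentzian pre-length space there is no evident reason a single $I(a,b)$ can be fitted inside an arbitrary such finite intersection. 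Moreover, openness of $I(a,b)$ holds in any Lorentzian pre-length space because $\ll$ is an open relation, independently of strong causality, which is about the agreement of the Alexandrov and metric topologies. Neither point affects the substance of your argument, since finite intersections of the $I(a_i,b_i)$ are still open and causally convex, and those are the only properties you actually invoke---which is precisely why the paper simply speaks of a ``causally convex neighborhood'' rather than a set of the form $I(a,b)$.
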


\begin{proof}
{\ }\\[-5mm]
\begin{itemize}[fullwidth]
(i) $\Rightarrow$ (ii): Let $x \in X$ and $U$ a causally closed neighborhood of $x$. By strong causality, there exists a causally convex neighborhood $V$ of $x$ with $x \in V \subseteq U$. Let $(p_n)_n$ and $(q_n)_n$ be sequences in $V$ with $p_n \le_V q_n$ for all $n$, such that $p_n \to p \in V$ and $q_n \to q \in V$ as $n \to \infty$. Since $V$ is causally convex, $\le_V \, = \left. \le \right|_{V \times V}$, so $p_n \le q_n$. Therefore, since $U$ is causally closed and $p, q \in V \subseteq U$, we have $p \le q$. Since $p, q \in V$ and $\le_V \, = \left. \le \right|_{V \times V}$, it follows that $p \le_V q$. Hence, $V$ is a weakly causally closed neighborhood of $x$. 

\item[(ii) $\Rightarrow$ (i)]: Let $x \in X$ and $U$ a weakly causally closed neighborhood of $x$. Since $(X, d)$ is a metric space, it satisfies the $T_3$ separation condition. In combination with strong causality, this implies that there exists a causally convex neighborhood $V$ of $x$ with the property that $x \in V \subseteq \overline{V} \subseteq U$. Let $(p_n)_n$ and $(q_n)_n$ be sequences in $V$ with $p_n \le q_n$ for all $n$, such that $p_n \to p$ and $q_n \to q$, where $p, q \in \overline{V} \subseteq U$. Since $V$ is causally convex, we have $p_n \le_V q_n$ and therefore, since $V \subseteq U$, $p_n \le_U q_n$ for all $n \in \N$. Since $U$ is weakly causally closed, it follows that $p \le_U q$ and, therefore, $p \le q$. Therefore, $V$ is a causally closed neighborhood of $x$.
\qedhere
\end{itemize}
\end{proof}

Finally, as pointed out in~\cite[pp.~8]{hau2020causal}, the limit curve theorem~\cite[Thm.\ 3.7]{kunzinger2018lorentzian} holds with the assumption of local weak causal closedness.%

\begin{Theorem}[Limit curve theorem for segments]
\label{Theorem: limitcurvetheoremforsegments}
Let $(X,d,\ll,\leq,\tau)$ be a 
locally weakly causally closed Lorentzian pre-length space. Let $\gamma_n \colon [a, b] \to X$ be a uniformly Lipschitz sequence of future-directed causal curves. Suppose that the $\gamma_n$ are either
\begin{itemize}
    \item[(a)] all contained in a common compact set, or 
    \item[(b)] the metric $d$ is proper, and there is $t_0 \in [a, b]$ such that a subsequence of $\gamma_n(t_0)$ converges. 
\end{itemize}
Then a subsequence of the $\gamma_n$ converges $d$-uniformly to a Lipschitz curve $\gamma \colon [a, b] \to X$. If $\gamma$ is nowhere constant, then $\gamma$ is a future-directed causal curve. Moreover:
\begin{enumerate}
\item\label{LCi} If $\gamma_n(a) = p$ and $\gamma_n(b) = q$ for each $n \in \N$, with $p \neq q$, then $\gamma$ is a future-directed causal curve from $p$ to $q$; 
\item\label{LCii} If $X$ is, in addition, strongly causal and localizable and the $\gamma_n$ are limit maximizing, i.e.\ there exists a sequence $\varepsilon_n \to 0$ such that
\[
L_{\tau}(\gamma_n) \geq \tau(\gamma_n(a),\gamma_n(b)) - \varepsilon_n,
\]
then $\gamma$ is maximizing. In this case, $L_{\tau}(\gamma) = \lim_{k \to \infty} L_{\tau}(\gamma_{n_k})$, where $\gamma_{n_k}$ is the subsequence converging to $\gamma$.
\end{enumerate}
\end{Theorem}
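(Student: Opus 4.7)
My plan is to extract a uniformly convergent subsequence via Arzel\`a--Ascoli, use local weak causal closedness to transfer the causal relation to the limit, and then address the refinements (i) and (ii) separately. For the convergence step, case (a) is immediate: a uniformly Lipschitz family with images in a common compact set is equicontinuous and pointwise precompact, so Arzel\`a--Ascoli yields a subsequence $\gamma_{n_k}$ converging $d$-uniformly to a Lipschitz curve $\gamma : [a,b] \to X$ sharing the common Lipschitz constant. Case (b) reduces to (a) after noting that properness of $d$, the uniform Lipschitz estimate, and the assumed convergence of $\gamma_{n_k}(t_0)$ together confine all tails of the sequence to a closed, hence compact, ball.

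To show that $\gamma$ satisfies $\gamma(s) \le \gamma(t)$ for $a \le s < t \le b$, I would exploit local weak causal closedness as follows. Cover the compact set $\gamma([s,t])$ by weakly causally closed open neighborhoods and, using continuity of $\gamma$ together with uniform convergence $\gamma_{n_k} \to \gamma$, extract a partition $s = r_0 < r_1 < \cdots < r_N = t$ and weakly causally closed open sets $U_1, \dots, U_N$ with $\gamma([r_{i-1}, r_i]) \subseteq U_i$ and, for all sufficiently large $k$, also $\gamma_{n_k}([r_{i-1}, r_i]) \subseteq U_i$. Since the causal curve $\gamma_{n_k}$ restricted to $[r_{i-1}, r_i]$ stays inside $U_i$, we have $\gamma_{n_k}(r_{i-1}) \le_{U_i} \gamma_{n_k}(r_i)$; weak causal closedness of $U_i$ passes this to the limit, yielding $\gamma(r_{i-1}) \le_{U_i} \gamma(r_i)$ and hence $\gamma(r_{i-1}) \le \gamma(r_i)$. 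Transitivity of $\le$ then delivers $\gamma(s) \le \gamma(t)$. Combined with the Lipschitz property from Step 1, this shows that whenever the limit is nowhere constant it is a future-directed causal curve.

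For part (i), the fixed endpoints $\gamma_n(a) = p \ne q = \gamma_n(b)$ transfer to the limit, so $\gamma$ is non-constant, but a priori $\gamma$ may still be constant on some subintervals. The remedy is to reparametrize by $d$-arclength: set $s(t) \coloneqq $ the total $d$-variation of $\gamma|_{[a,t]}$, a continuous non-decreasing function with $s(b) \ge d(p,q) > 0$, and define $\tilde\gamma \colon [0, s(b)] \to X$ by $\tilde\gamma(s(t)) \coloneqq \gamma(t)$. This is well-defined because $s(t_1) = s(t_2)$ with $t_1 < t_2$ forces $\gamma$ to be constant on $[t_1, t_2]$; the resulting $\tilde\gamma$ is $1$-Lipschitz, nowhere constant by construction, still obeys the causal relation inherited from $\gamma$ via the previous paragraph, and runs from $p$ to $q$. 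I expect this reparametrization to be the main technical obstacle, as one must check that it preserves both the causal condition and the conclusions of Step 2 without losing Lipschitz control.

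Finally, for part (ii), under strong causality and localizability the Proposition on upper semicontinuity of $L_\tau$ quoted earlier gives $L_\tau(\gamma) \ge \limsup_k L_\tau(\gamma_{n_k})$. The limit-maximizing hypothesis combined with lower semicontinuity of $\tau$ yields
\[
\liminf_k L_\tau(\gamma_{n_k}) \ge \liminf_k \bigl(\tau(\gamma_{n_k}(a),\gamma_{n_k}(b)) - \varepsilon_{n_k}\bigr) \ge \tau(\gamma(a), \gamma(b)),
\]
while the reverse triangle inequality (applied to the trivial partition $\{a,b\}$) produces the matching upper bound $L_\tau(\gamma) \le \tau(\gamma(a), \gamma(b))$. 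Chaining these three estimates forces every inequality to be an equality, so $\gamma$ is maximizing and $\lim_k L_\tau(\gamma_{n_k}) = L_\tau(\gamma)$. Aside from Step 3, the conceptually most delicate point is ensuring in Step 2 that one works with the local relations $\le_{U_i}$ throughout, since it is precisely there that the hypothesis of local weak causal closedness (rather than the stronger local causal closedness) is used.
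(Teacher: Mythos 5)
Your proposal is correct and follows essentially the same route as the paper: Arzel\`a--Ascoli (reducing case (b) to (a) via properness) to extract the uniform limit, weak causal closedness applied to local neighborhoods to propagate $\gamma_n(t_1)\le_U\gamma_n(t_2)$ to $\gamma(t_1)\le\gamma(t_2)$ and then chaining via transitivity, $d$-arclength reparametrization to handle possible constancy intervals in part (i), and the three-way squeeze between upper semicontinuity of $L_\tau$, lower semicontinuity of $\tau$, and the trivial partition bound for part (ii). The only presentational difference is that you spell out the reparametrization step and the semicontinuity chain explicitly, whereas the paper delegates those details to the proofs of \cite[Lem.~3.6]{kunzinger2018lorentzian} (which also handles the fact that causal curves are here required to be \emph{nowhere} constant, so the raw uniform limit need not directly qualify) and \cite[Thm.~2.23]{beran2023splitting}.
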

\begin{proof}
From the proof of~\cite[Thm.~3.7]{kunzinger2018lorentzian}, we deduce that there exists a subsequence $\left( \gamma_{n_k} \right)_k$ of curves that converge uniformly to a Lipschitz continuous limit curve $\gamma$. Therefore, without loss of generality, we may assume that the sequence of curves $\left( \gamma_n \right)_n$ converges uniformly to $\gamma$. Since $X$ is locally weakly causally closed, for every $t \in [a, b]$ there exists an open, weakly causally closed neighborhood $U_t$ of $\gamma(t)$. Let $a \le t_1 < t_2 \le b$ be such that $\gamma(t_1), \gamma(t_2) \in U_t$. Then, by uniform convergence, there exists an $n_0 \in \N$ such that, for all $n \ge n_0$, we have $\gamma_n \!\left( \left[ t_1, t_2 \right] \right) \subseteq U_t$. Since $\gamma_n$ are causal, this implies that $\gamma_n(t_1) \le_{U_t} \gamma_n(t_2)$ for all $n \ge n_0$. The fact that $U_t$ is weakly causally closed then implies that $\gamma(t_1) \le_{U_t} \gamma(t_2)$ and, therefore, $\gamma(t_1) \le \gamma(t_2)$. The rest of the proof of part~\ref{LCi} proceeds as in the proof of~\cite[Lemma~3.6]{kunzinger2018lorentzian}.

The claim~\ref{LCii} about the maximality of the limit is a consequence of lower semicontinuity of $\tau$ and upper semicontinuity of $L_{\tau}$, 
cf.\ the proof of~\cite[Thm.\ 2.23]{beran2023splitting}.
\end{proof}

In the context of geodesics, we will often be interested in Lorentzian pre-length spaces where the local maximization property is uniform. This motivates the following definitions (one of which was already introduced in~\cite[Def.\ 2.20]{beran2023splitting}).

\begin{Definition}[Locally maximizing]\label{def:locally-maximising}
A Lorentzian pre-length space $(X,d,\ll,\leq,\tau)$ is called~\emph{locally timelike (causally, null) maximizing}\footnote{Such spaces were referred to as~\emph{locally quasiuniformly maximizing} in~\cite[Def.\ 2.20]{beran2023splitting}.} if for each point $p \in X$ there exists a neighborhood $U$ of $p$ such that each timelike (causal, null) geodesic segment entirely contained in $U$ is maximizing.
\end{Definition}

If $X$ is locally causally maximizing, we simply say that it is locally maximizing.

\begin{Example}[$C^{1,1}$-spacetimes are locally maximizing]
Let $(M,g)$ be a strongly causal $C^{1,1}$-spacetime, then it is locally maximizing: By Remark~\ref{Remark: geodesicsvssolutionsofgeodeq}, the two available notions of timelike (null, causal) geodesics agree. Moreover, by \cite[Thm.\ 6]{minguzzi2015convex} local maximizers coincide with classical solutions of the geodesic equation.
Now let $p \in M$ and $U$ a convex neighborhood containing $p$ (cf.\ \cite{kunzinger2014regularisation,minguzzi2015convex}). Then 
for any two causally related points $q_1 \leq q_2$, $q_1,q_2\in U$, there is a unique causal geodesic connecting them which is entirely contained in $U$. Hence any causal geodesic in $U$ is already maximizing.
\end{Example}

\begin{Example}[Low regularity spacetimes which are not locally maximizing] The Hartman-Wintner-type spacetimes from \cite[Ex.\ 2.4]{samann2018geodesics}, which are $C^{1,\alpha}$ for $0<\alpha<1$, are not locally maximizing.
\end{Example}

\begin{Example}
    The Lorentzian constant curvature $K$ spaces, which are (the universal coverings of) de-Sitter space, Minkowski space, and anti-de Sitter space of the corresponding radius (cf.\ Definition~\ref{Def: model spaces}), are locally maximizing: Indeed in the case of $K \geq 0$ this is trivial since causal geodesics are maximizing throughout, and in the case $K < 0$ one can consider convex neighborhoods of a given point $x$ which avoid the timelike cut locus 
    $\{\tau(x,.) = \frac{\pi}{\sqrt{-K}}\}$.
\end{Example}

\begin{Definition}[Spaces of geodesics]
Let $(X,d,\ll,\leq,\tau)$ be a finitely $\tau$-measurable Lorentzian pre-length space. We denote by $\Geo(X)$ the set of equivalence classes of future-directed causal geodesic segments $\gamma \colon [0, 1] \to X$, where two such segments are called equivalent if and only if there exists a  strictly increasing and surjective reparametrization $\varphi \colon [0,1] \to [0,1]$ between them.%
\footnote{See Definition~\ref{def:TLcausalcurves}.} %
We denote by $\TGeo(X) \subseteq \Geo(X)$ the subset of equivalence classes of future-directed timelike geodesic segments. For convenience, we write $\gamma \in \Geo(X)$ for both the equivalence class and some representative. 

We consider the following metrics on $\Geo(X)$: $d_F$ is the restriction of the Fr\'{e}chet distance (see Appendix~\ref{Section: appendix frechet}), and
\begin{align}
    d_{\Gamma}(\gamma_1,\gamma_2) \coloneqq d_F(\gamma_1,\gamma_2) + |L_{\tau}(\gamma_1) - L_{\tau}(\gamma_2)|.
\end{align}
\end{Definition}

In the following result, we denote by $\mathrm{Const}(X)$ the set of all constant paths $[0,1]\to X$.
\begin{Proposition}[Topology of $\Geo(X)$]
\label{Proposition: TopologyofGeoX}
Let $(X,d,\ll,\leq,\tau)$ be a localizable, locally weakly causally closed, strongly causal, and locally maximizing Lorentzian pre-length space. Then $\Geo(X)$ is a closed subset of the set of nowhere constant curves from $[0,1]$ to $X$ w.r.t.\ the Fr\'{e}chet distance $d_F$. Moreover, the metric topologies of $d_F$ and $d_\Gamma$ coincide on $\Geo(X)$.
\begin{proof}
    Let $(\gamma_n) \subseteq \Geo(X)$ converge to the equivalence class of a nowhere constant path $\gamma \colon [0,1] \to X$. By definition of $d_F$, there exists a sequence of representatives (again called $\gamma_n$) that converge uniformly to $\gamma$ (chosen to be some representative). By~\cite[Lem.\ 3.6]{kunzinger2018lorentzian} (cf.\ also~\cite{hau2020causal}), since $\gamma$ is assumed to be nowhere constant, it is a future-directed causal curve. Given $t \in [0,1]$, let $U$ be a neighborhood containing $\gamma(t)$ such that each causal geodesic entirely contained in $U$ is maximizing. W.l.o.g.\ suppose $t \in (0,1)$ and suppose $\delta >0$ so that $\gamma([t-\delta,t+\delta]) \subseteq U$. By uniform convergence, $\gamma_n([t-\delta,t+\delta])\subseteq U$ for all large enough $n$. Since $L_{\tau}$ is upper semicontinuous, one may then argue just like in the case of the limit curve theorem (cf.\ Theorem~\ref{Theorem: limitcurvetheoremforsegments}) to conclude that $\gamma([t-\delta,t+\delta])$ is maximizing, and in fact $\lim_n L_{\tau}(\gamma_n|_{[t-\delta,t+\delta]}) = L_{\tau}(\gamma|_{[t-\delta,t+\delta]})$ (cf.\ again the proof of~\cite[Thm.\ 2.23]{beran2023splitting}). Since $t$ was arbitrary, we conclude that $\gamma \in \Geo(X)$. Due to the convergence of Lorentzian lengths we just pointed out, we also see that $d_{F}$ and $d_{\Gamma}$ induce the same topology on $\Geo(X)$.
\end{proof}
\end{Proposition}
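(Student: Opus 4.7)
The plan is to take a Fr\'{e}chet-convergent sequence $(\gamma_n) \subseteq \Geo(X)$ with limit (as an equivalence class) $\gamma$ that is nowhere constant, and show that $\gamma$ is again a causal geodesic. First I would pick representatives $\gamma_n, \gamma \colon [0,1] \to X$ such that $\gamma_n \to \gamma$ uniformly with respect to $d$, which is possible by the characterization of Fr\'{e}chet convergence in Appendix~\ref{Section: appendix frechet}. Since $\gamma$ is assumed nowhere constant, the limit curve theorem (Theorem~\ref{Theorem: limitcurvetheoremforsegments}, relying on local weak causal closedness) implies $\gamma$ is a future-directed causal curve.

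For the geodesic property I would argue pointwise: fix $t \in [0,1]$ and use the locally maximizing assumption to produce a neighborhood $U$ of $\gamma(t)$ in which every causal geodesic entirely contained in $U$ is a maximizer. Choose $\delta > 0$ with $\gamma(J) \subseteq U$ for $J = [t-\delta, t+\delta] \cap [0,1]$; uniform convergence then forces $\gamma_n(J) \subseteq U$ for all sufficiently large $n$. Each $\gamma_n|_J$ is itself a causal geodesic entirely contained in $U$, so the locally maximizing hypothesis yields $L_\tau(\gamma_n|_J) = \tau(\gamma_n(\min J), \gamma_n(\max J))$. Combining the upper semicontinuity of $L_\tau$ along $d$-uniform limits (from~\cite{kunzinger2018lorentzian}, as quoted in the excerpt) with the lower semicontinuity of $\tau$ gives the sandwich
\[
L_\tau(\gamma|_J) \geq \limsup_n L_\tau(\gamma_n|_J) = \limsup_n \tau(\gamma_n(\min J), \gamma_n(\max J)) \geq \tau(\gamma(\min J), \gamma(\max J)),
\]
which, combined with the reverse triangle inequality $L_\tau(\gamma|_J) \leq \tau(\gamma(\min J), \gamma(\max J))$, forces equality on the two outer terms and in particular shows $\gamma|_J$ is maximizing. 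Hence $\gamma$ is a geodesic.

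For the equivalence of the two metric topologies, note that $d_F \leq d_\Gamma$ trivially, so it suffices to show that $L_\tau \colon (\Geo(X), d_F) \to \R$ is continuous. The sandwich above also yields $\lim_n L_\tau(\gamma_n|_J) = L_\tau(\gamma|_J)$ on each of the local intervals $J$ just constructed. By compactness of $[0,1]$ finitely many such intervals cover $[0,1]$, and additivity of $L_\tau$ along a partition into maximizing pieces (where $L_\tau$ coincides with the endpoint $\tau$) propagates the local convergence to the global statement $L_\tau(\gamma_n) \to L_\tau(\gamma)$, concluding the equivalence.

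The main obstacle is the careful joint use of two one-sided continuity properties: $L_\tau$ is only upper semicontinuous along uniform limits, while $\tau$ is only lower semicontinuous. Neither alone suffices to identify $L_\tau(\gamma|_J)$ with an endpoint $\tau$-value, and both are pulled into alignment only because the locally maximizing hypothesis lets us \emph{replace} $L_\tau(\gamma_n|_J)$ by the endpoint $\tau$-value $\tau(\gamma_n(\min J), \gamma_n(\max J))$ in the first place. Without the locally maximizing assumption the argument would stall at the strict inequality from upper semicontinuity alone.
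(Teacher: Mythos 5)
Your proof is correct and takes essentially the same route as the paper: pick uniformly convergent representatives, invoke local weak causal closedness for causality of the limit, and then combine the locally maximizing hypothesis with upper semicontinuity of $L_\tau$ and lower semicontinuity of $\tau$ to see that local maximization passes to the limit. Where the paper simply cites the limit curve theorem and the proof of the splitting theorem for the sandwich step, you spell out the chain of inequalities explicitly, which makes the underlying mechanism transparent but is not a genuinely different argument.
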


\medskip
We shall also require the following useful variant of the limit curve theorem. (See~\cite[Thm.\ 2]{EVP21} for a related result.) To this end, recall that a locally weakly causally closed, causally path-connected and localizable Lorentzian pre-length space is called a~\emph{Lorentzian length space\/} if
\begin{equation}
    \tau(x,y) = \sup \{L_\tau(\gamma) : \gamma \colon [a,b] \to X \text{ causal with }\gamma(a) = x, \, \gamma(b) = y\} \cup \{0\}.
\end{equation}
A Lorentzian length space is~\emph{globally hyperbolic\/} if it is non-totally imprisoning (i.e., $d$-lengths of causal curves in compact sets are uniformly bounded) and $J(x,y)$ is compact for all $x,y \in X$. Globally hyperbolic Lorentzian length spaces are strongly causal, cf.\ \cite[Thm.\ 3.26]{kunzinger2018lorentzian}.

\begin{Theorem}[Limit curve theorem: Converging endpoints]
\label{Theorem: Limitcurvetheoremtwoendpointcase}
Let $(X,d,\ll,\leq,\tau)$ be a globally hyperbolic Lorentzian length space. Let $\gamma_n \colon [0,1] \to X$ be a sequence of future-directed causal curves such that $p_n \coloneqq \gamma_n(0) \to p$ and $q_n \coloneqq \gamma_n(1) \to q$ with $p\ne q$. Then a subsequence of a reparametrized sequence $(\tilde{\gamma}_n)$ converges uniformly to a future-directed causal curve from $p$ to $q$. Moreover, we have:
\begin{enumerate}
\item\label{217i} If each $\gamma_n$ is a geodesic and $X$ is locally maximizing, then $\gamma$ is also a geodesic, and in this case $L_\tau(\gamma) = \lim_n L_\tau(\gamma_n)$. Moreover, if $X$ is regular and $p \ll q$, then $\gamma$ is a future-directed timelike geodesic.
\item\label{217ii} If $(\gamma_n)$ is a limit maximizing sequence (in particular: if each $\gamma_n$ is maximizing), then $\gamma$ is maximizing.
\end{enumerate}
\begin{proof}
    Choose $\Tilde{p} \in I^-(p)$ and $\Tilde{q} \in I^+(q)$. Then $\gamma_n([0,1]) \subseteq J(\Tilde{p}, \Tilde{q})$, which is compact. Since $X$ is non-totally imprisoning, there is a uniform upper bound on the $d$-arclengths of causal curves contained in $J(\Tilde{p},\Tilde{q})$, hence we may reparametrize the $\gamma_n$ with respect to $d$-arclength to get a uniformly Lipschitz sequence,
    (again denoted by the same letter) $\gamma_n \colon [a_n,b_n] \to X$, with both $(a_n)$ and $(b_n)$ bounded sequences. Without loss of generality we may suppose that $a_n\to a$ and $b_n\to b$. Let $\alpha_n$ be a future-directed causal curve connecting $\tilde{p}$ to $p_n=\gamma_n(a_n)$, parametrized by $d$-arclength, say $\alpha_n \colon [a_n-\delta_n,a_n] \to X$ for some $\delta_n>0$. Then $\delta_n \ge d(\tilde{p},p_n) \to d(\tilde{p},p)>0$. Similarly, let $\beta_n \colon [b_n,b_n+\eta_n] \to X$ be a $d$-arclength reparametrization of a future-directed causal curve from $q_n=\gamma_n(b_n)$ to $\tilde{q}$. Here, $\eta_n \ge d(q_n,\tilde{q}) \to d(q,\tilde{q})>0$. Hence we may suppose that $\min(\delta_n,\eta_n)>2c$ for some $c>0$ and all $n$. Let now $\hat{\gamma}_n$ denote the restriction of the concatenation of $\alpha_n$, $\gamma_n$ and $\beta_n$ to the interval $[a-c,b+c]$. Applying Theorem~\ref{Theorem: limitcurvetheoremforsegments} to this sequence it follows that we can extract a subsequence that converges uniformly to a causal curve whose $d$-arclength reparametrization is defined on $[a-c,b+c]$. 
    Denoting by $\gamma$ the restriction of this curve to $[a,b]$ we obtain a causal limit curve connecting $p$ to $q$.
   
    Part~\ref{217i} now follows from Proposition~\ref{Proposition: TopologyofGeoX} and the assumption that $p \neq q$. The additional claim is an immediate consequence of the fact that in regular spaces maximizers have a causal character.

    Part~\ref{217ii} follows from Theorem~\ref{Theorem: limitcurvetheoremforsegments}, which states that the curve $\gamma$ is maximizing if the sequence $\gamma_n$ is limit maximizing.
\end{proof}
\end{Theorem}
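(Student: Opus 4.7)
The plan is to reduce the statement to the segment version of the limit curve theorem (Theorem~\ref{Theorem: limitcurvetheoremforsegments}) by reparametrizing and extending the $\gamma_n$ so that they form a uniformly Lipschitz family defined on a common compact interval inside a common compact set. Global hyperbolicity drives the reduction: picking $\tilde p \in I^-(p)$ and $\tilde q \in I^+(q)$, convergence of the endpoints forces $\gamma_n([0,1]) \subseteq J(\tilde p, \tilde q)$ for large $n$, and $J(\tilde p, \tilde q)$ is compact. Non-total imprisonment then yields a uniform upper bound on the $d$-arclengths of the $\gamma_n$, so $d$-arclength reparametrization produces a $1$-Lipschitz sequence $\tilde\gamma_n \colon [a_n,b_n] \to X$ whose domain lengths are bounded above by that constant and bounded below by $d(p_n,q_n) \to d(p,q) > 0$. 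Extracting a subsequence, I may assume $a_n \to a$ and $b_n \to b$ with $b > a$.

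To place the $\tilde\gamma_n$ on a common interval, I would extend them by fixed causal pieces on either side. Take $d$-arclength parametrized causal curves $\alpha_n$ from $\tilde p$ to $p_n$ and $\beta_n$ from $q_n$ to $\tilde q$, defined on $[a_n - \delta_n, a_n]$ and $[b_n, b_n + \eta_n]$ respectively. Since $\delta_n \geq d(\tilde p, p_n)$ and $\eta_n \geq d(q_n, \tilde q)$ both stay bounded away from zero, there is a uniform $c > 0$ with $\delta_n, \eta_n > 2c$ eventually. Concatenating $\alpha_n, \tilde\gamma_n, \beta_n$ and restricting to $[a-c, b+c]$ yields a uniformly Lipschitz sequence $\hat\gamma_n$ inside the compact set $J(\tilde p, \tilde q)$. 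Theorem~\ref{Theorem: limitcurvetheoremforsegments} (in its compact-set case) then delivers a subsequential uniform limit $\hat\gamma$, whose restriction to $[a,b]$ is a continuous curve from $p$ to $q$; after a final reparametrization collapsing any constant pieces it yields the desired future-directed causal curve $\gamma$.

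Parts~\ref{217i} and~\ref{217ii} should then follow from tools already in place. For~\ref{217i}, if each $\gamma_n$ is a geodesic and $X$ is locally maximizing, then Proposition~\ref{Proposition: TopologyofGeoX} applies directly: the Fr\'{e}chet limit $\gamma$ lies in $\Geo(X)$, and because $d_\Gamma$ and $d_F$ induce the same topology on $\Geo(X)$ one also gets $L_\tau(\gamma) = \lim_n L_\tau(\gamma_n)$. If in addition $X$ is regular and $p \ll q$, then Lemma~\ref{Lemma: Geodesicshavecausalcharacter} forces $\gamma$ to be either timelike throughout or null, and the inequality $\tau(p,q) > 0$ rules out the null case. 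Part~\ref{217ii} is a direct transfer of the maximality conclusion of Theorem~\ref{Theorem: limitcurvetheoremforsegments} through the same reparametrization-and-extension construction, since limit maximality is preserved by the trivial extension on either end.

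The main obstacle I anticipate is the nowhere-constant condition: uniform limits of $d$-arclength parametrized sequences need not themselves be $d$-arclength parametrized, so $\hat\gamma|_{[a,b]}$ may be constant on open subintervals. One needs a final collapsing step to produce a bona fide causal curve in the sense of Definition~\ref{def:TLcausalcurves}, and the assumption $p \neq q$ is what guarantees that this collapse does not reduce $\gamma$ to a constant. A secondary but routine concern is confirming that $I^-(p)$ and $I^+(q)$ are nonempty, which in a globally hyperbolic Lorentzian length space follows from causal path-connectedness together with the length-space axiom.
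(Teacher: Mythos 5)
Your proposal reproduces the paper's argument essentially step by step: the same choice of $\tilde p\in I^-(p)$, $\tilde q\in I^+(q)$, containment in the compact set $J(\tilde p,\tilde q)$, $d$-arclength reparametrization via non-total imprisonment, extension by $\alpha_n,\beta_n$ onto a common interval, application of Theorem~\ref{Theorem: limitcurvetheoremforsegments}, and then Proposition~\ref{Proposition: TopologyofGeoX} together with the causal-character lemma for parts~\ref{217i}--\ref{217ii}. Your explicit handling of the nowhere-constant issue via a final $d$-arclength collapse (with $p\ne q$ preventing degeneration) is a correct and slightly more careful rendering of a step the paper only sketches.
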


\begin{Remark}[Limit curve theorem for $\Geo(X)$]
\label{Remark: limitcurveconvendp for Geo}
We may reformulate Theorem~\ref{Theorem: Limitcurvetheoremtwoendpointcase} in the language of equivalence classes as follows: Let $X$ be a globally hyperbolic and locally maximizing Lorentzian length space and $\gamma_n \in \Geo(X)$. Suppose that $\gamma_n(0) \eqqcolon p_n \to p \neq q \leftarrow q_n \coloneqq \gamma_n(1)$. Then a subsequence of $(\gamma_n)$ converges in $\Geo(X)$ to some  $\gamma$. Moreover, if $X$ is regular and $p \ll q$, then $\gamma \in \TGeo(X)$. If each $\gamma_n$ is maximizing, then so is $\gamma$.
\end{Remark}

We end this subsection with a property of geodesic convergence in smooth semi-Riemannian manifolds. 
\begin{Lemma}\label{lem:geod_convergence}
Let $(M,g)$ be a smooth semi-Riemannian manifold. Let $a < 0 \leq 1 < b$. Then the following are equivalent for geodesic segments $\gamma_n, \gamma \colon [a,b] \to M$:
\begin{enumerate}
    \item $\gamma_n \to \gamma$ in $C^0$.
    \item $\gamma_n \to \gamma$ in $C^\infty$.
    \item $\gamma_n'(0) \to \gamma'(0)$ in $TM$.
\end{enumerate}
\end{Lemma}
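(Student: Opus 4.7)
The plan is to split the equivalence into the three implications $(2)\Rightarrow(1)$, $(3)\Rightarrow(2)$, and $(1)\Rightarrow(3)$, the last of which will be the only nontrivial one.

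The implication $(2)\Rightarrow(1)$ is immediate from the inclusion of topologies. For $(3)\Rightarrow(2)$, I would invoke smooth dependence on initial data for the geodesic ODE on $TM$: since $\gamma$ is defined on the compact interval $[a,b]$ with initial data $(\gamma(0),\gamma'(0))\in TM$, and the set of initial data whose maximal geodesic contains $[a,b]$ is open in $TM$, it follows that for $n$ large enough each $\gamma_n$ is defined on $[a,b]$ and $\gamma_n\to\gamma$ in $C^\infty([a,b],M)$.

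The main work is in $(1)\Rightarrow(3)$. Fix an auxiliary complete Riemannian metric $h$ on $M$ (not related to $g$) and set $v_n\coloneqq\gamma_n'(0)$, $v\coloneqq\gamma'(0)$. The plan is first to prove that $(v_n)$ is $h$-bounded, then to identify its limit via a subsequence argument.

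For the boundedness step, which I expect to be the main obstacle, I argue by contradiction: assume $|v_n|_h\to\infty$ along a subsequence. Set $\hat v_n\coloneqq v_n/|v_n|_h$; since $\gamma_n(0)\to\gamma(0)$ and the $h$-unit sphere bundle is locally compact, pass to a subsequence with $\hat v_n\to\hat v$ for some $\hat v\in T_{\gamma(0)}M$ of unit $h$-length. The affine reparametrization $\tilde\gamma_n(s)\coloneqq\gamma_n(s/|v_n|_h)$ is still a geodesic, with initial data $(\gamma_n(0),\hat v_n)\to(\gamma(0),\hat v)$, so by $(3)\Rightarrow(2)$ it converges in $C^\infty$ on a fixed neighborhood of $0$ to the geodesic $\tilde\gamma$ with these initial data. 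On the other hand, for each fixed $s$ we have $s/|v_n|_h\to 0$, and uniform convergence $\gamma_n\to\gamma$ on $[a,b]$ forces $\tilde\gamma_n(s)=\gamma_n(s/|v_n|_h)\to\gamma(0)$. Thus $\tilde\gamma$ is constant near $0$, contradicting $|\tilde\gamma'(0)|_h=1$.

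Once $(v_n)$ is bounded, it lies in a compact subset of $TM$ since $\gamma_n(0)\to\gamma(0)$. Take any subsequence $v_{n_k}\to w\in T_{\gamma(0)}M$; by $(3)\Rightarrow(2)$ the corresponding geodesics converge in $C^\infty$ to the geodesic $\sigma$ with initial data $(\gamma(0),w)$, but $(1)$ forces $\sigma=\gamma$, hence $w=v$ by uniqueness of solutions. Therefore every subsequence of $(v_n)$ has a sub-subsequence tending to $v$, which yields $v_n\to v$ in $TM$ and completes the proof.
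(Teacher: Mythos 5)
Your proof is correct, but the route for the interesting implication $(i)\Rightarrow(iii)$ is genuinely different from the paper's. The paper's argument is short and direct: it uses the map $E\colon\mathcal{D}\subseteq TM\to M^2$, $E(p,v)=(p,\exp_p v)$, which is a diffeomorphism from a neighborhood of the zero section onto a neighborhood of the diagonal, together with a covering of $\gamma([a,b])$ by finitely many convex normal neighborhoods $U_i$ (chosen so that $\bigcup_i U_i^2$ lies in the domain where $E^{-1}$ is defined). Uniform convergence puts the $\gamma_n$ into the same neighborhoods for large $n$, and one then reads off $\gamma_n'(0)$ by applying $E^{-1}$ to the pair of endpoints, which converges by continuity of $E^{-1}$. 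Your approach replaces this local-invertibility argument by a two-step compactness argument: first, a normalization-plus-contradiction argument (rescale the geodesics so that the initial velocities are $h$-unit, appeal to $(iii)\Rightarrow(ii)$ for the rescaled family, and observe that the $C^0$ limit forces the rescaled limit geodesic to be constant, contradicting unit speed) to show that $(\gamma_n'(0))$ is bounded; second, a subsequence-uniqueness argument using $(iii)\Rightarrow(ii)$ again to pin down the limit of any convergent subsequence. Your route is somewhat longer, needs an auxiliary Riemannian metric (completeness is superfluous here, since everything happens over a compact neighborhood of $\gamma(0)$, but it is harmless), and uses $a<0$ essentially (to have a two-sided neighborhood of $0$ on which to run the rescaled geodesics), whereas the paper's route leans on the local diffeomorphism property of $E$. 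Both are valid; the normalization trick in your boundedness step is a nice self-contained device that avoids any appeal to normal neighborhoods.
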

\begin{proof}
Clearly, $(ii)$ implies $(i)$. To see that $(iii)$ implies $(ii)$, observe that if $(p_n,v_n) \in TM$ with $(p_n,v_n) \to (p,v)$, then the fact that $\gamma_n(t) \coloneqq \exp_{p_n}(t v_n)$ converges to $\gamma(t) \coloneqq \exp_{p}(tv)$ in $C^\infty_{\mathrm{loc}}$ on the maximal domain of $\gamma$ is a consequence of basic ODE theory, see e.g.\ \cite[Cor.\ 2.6]{KOSS}. We are left with showing that $(i)$ implies $(iii)$. Suppose $\gamma_n \colon [a,b] \to M$ converges uniformly to $\gamma \colon [a,b] \to M$. It is well-known that the exponential map $E \colon \mathcal{D} \subseteq TM \to M^2$ is a diffeomorphism from a neighborhood of the zero section in $TM$ onto a neighborhood of the diagonal in $M^2$. We may cover $\gamma$ with finitely many convex normal neighborhoods $U_i$ such that $\bigcup_i U_i^2$ is contained in the aforementioned neighborhood of the diagonal. For large enough $n$, the $U_i$ cover the corresponding portions of $\gamma_n$ due to uniform convergence. But then
    \begin{align*}
        \gamma_n'(0) = E^{-1}(\gamma_n(0),\exp_{\gamma_n(0)}(\gamma_n'(0))) = E^{-1}(\gamma_n(0),\gamma_n(1)) \to E^{-1}(\gamma(0),\gamma(1)) = \gamma'(0),
    \end{align*}
    concluding the proof.
\end{proof}

\section{Timelike conjugate points}
\label{Section: Timelikeconjugatepoints}

This section is dedicated to the discussion of several notions of conjugate points in the synthetic setting. These notions were introduced in positive definite signature by Shankar--Sormani~\cite{shankar2009conjugate} (cf.\ also Rinow~\cite{rinow1961innere}). We will for the most part follow their terminology and conventions when adapting the notions to the Lorentzian setting.

\subsection{Symmetric conjugate points}
\label{Subsection: symmetric conjugate points}

For the following discussion, let $(X,d,\ll,\leq, \tau)$ be a finitely $\tau$-measurable Lorentzian pre-length space.

\begin{Definition}[One-sided conjugate points]
Let $\gamma \in \TGeo(X)$ be a future-directed timelike geodesic from $p$ to $q$. We say $q$ is~\emph{one-sided conjugate to} $p$ if there is a sequence $q_i \to q$ and for each $i$, there exist two distinct future-directed timelike geodesics $\eta_i \neq \gamma_i \in \TGeo(X)$ from $p$ to $q_i$ such that $ \eta_i,\gamma_i \to \gamma$ with respect to $d_{\Gamma}$.
\end{Definition}

\begin{Theorem}\label{th:one-sided-vs-classical}
In a strongly causal smooth spacetime $(M,g)$, points $p\ll q$ are one-sided conjugate points along a future-directed timelike geodesic $\gamma$ if and only if they are conjugate points (in the Jacobi field sense).
\end{Theorem}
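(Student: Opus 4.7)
Write $v := \gamma'(0) \in T_p M$, so that $\gamma(t) = \exp_p(tv)$ is the unique affinely parametrised smooth representative of $\gamma$ on $[0,1]$. The plan is to translate both notions of conjugacy into statements about the (non-)injectivity of $\exp_p$ near $v$, and then connect them via Szeghy's~\cite{szeghy2008conjugate} semi-Riemannian generalisation of Warner's analysis of the conjugate locus, together with Lemma~\ref{lem:geod_convergence} and Remark~\ref{Remark: geodesicsvssolutionsofgeodeq}.

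For the direction ``Jacobi $\Rightarrow$ one-sided'', I would start from the assumption that $v$ is a critical point of $\exp_p$. Szeghy's extension of Warner's theorem then yields sequences $v_i, w_i \to v$ with $v_i \neq w_i$ and $\exp_p(v_i) = \exp_p(w_i) =: q_i \to q$. Since future-directed timelikeness is open in $T_pM$, for large $i$ the smooth geodesics $\eta_i(t) := \exp_p(tv_i)$ and $\gamma_i(t) := \exp_p(tw_i)$ are future-directed timelike segments from $p$ to $q_i$; being smooth Lorentzian geodesics they are local $\tau$-maximisers, so they lie in $\TGeo(X)$, and they represent distinct equivalence classes since their affine initial velocities differ. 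Lemma~\ref{lem:geod_convergence} then gives $C^\infty$-convergence $\eta_i,\gamma_i \to \gamma$, hence $d_F$-convergence; combined with $L_\tau(\eta_i) = \sqrt{-g(v_i, v_i)} \to L_\tau(\gamma)$, this produces the $d_\Gamma$-convergence required by the definition of one-sided conjugacy.

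For the reverse direction, I would pick the unique affinely parametrised smooth representatives $\eta_i(t) = \exp_p(t \bar{v}_i)$ and $\gamma_i(t) = \exp_p(t \bar{w}_i)$ of the given synthetic data (available by Remark~\ref{Remark: geodesicsvssolutionsofgeodeq}), with $\bar{v}_i \neq \bar{w}_i$ and $\exp_p(\bar{v}_i) = \exp_p(\bar{w}_i) = q_i \to q$. The key step is to show $\bar{v}_i, \bar{w}_i \to v$: once this holds, $\exp_p$ fails to be locally injective at $v$, so the inverse function theorem forces $(d\exp_p)_v$ to be singular, which is the Jacobi-conjugacy criterion. To prove this convergence, I would fix an auxiliary Riemannian metric $h$ on $M$: the $d_\Gamma$-hypothesis gives $\|\bar{v}_i\|_g \to \|v\|_g$ (bounding the Lorentzian norm), and once $h$-boundedness of $\bar{v}_i$ is established, extracting a subsequential limit $\bar{v}_*$ and invoking Lemma~\ref{lem:geod_convergence} yields $\eta_i \to \exp_p(\cdot\,\bar{v}_*)$ in $C^0$. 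Matching this against the hypothesis $\eta_i \to \gamma$ in $d_F$ (using that $d_F$ is a genuine metric on $\Geo(X)$ and that both candidate limits are affinely parametrised on $[0,1]$) then forces $\bar{v}_* = v$, and likewise for $\bar{w}_i$.

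The main obstacle is the Riemannian boundedness of $\bar{v}_i$, since a sequence of future timelike vectors can escape to infinity tangentially to the null cone while keeping its $g$-norm bounded; the Lorentzian length bound alone therefore does not suffice. The argument I have in mind is to use $d_F$-closeness to $\gamma$ near $p$, together with the local diffeomorphism property of $\exp_p$ at $0 \in T_pM$, to conclude that any subsequential limit of the direction $\bar{v}_i/\|\bar{v}_i\|_h$ must equal $v/\|v\|_h$; then, were $\|\bar{v}_i\|_h$ unbounded, the decomposition $\bar{v}_i = \|\bar{v}_i\|_h(v/\|v\|_h + o(1))$ combined with the reverse Cauchy--Schwarz on $g_p$ would force $\|\bar{v}_i\|_g \to \infty$, contradicting the $L_\tau$-bound. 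Beyond this technical boundedness step, the rest of the argument is a direct unpacking of the definitions together with Lemma~\ref{lem:geod_convergence}, so this is where the genuine content of the theorem lies.
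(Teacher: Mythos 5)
Your proposal follows essentially the same route as the paper: Szeghy's semi-Riemannian generalization of Warner's conjugate-locus theorem for the ``Jacobi $\Rightarrow$ one-sided'' direction, and local injectivity of $\exp_p$ at regular points (via Lemma~\ref{lem:geod_convergence}) for the converse. The one step you compress is the application of Szeghy: his theorem does not directly produce sequences $v_i, w_i \to v$ with $v_i \neq w_i$ and $\exp_p(v_i)=\exp_p(w_i)$; one must first invoke the density of the set $C^R(p)\cap C(p)'$ of regular conjugate vectors of locally minimal multiplicity in the conjugate locus, pick such a $v'$ arbitrarily close to $v$, use the Warner-map property to get non-injectivity of $\exp_p$ on every neighborhood of $v'$, and then diagonalize over shrinking neighborhoods of $v$ — the paper spells this out. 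Conversely, your treatment of ``one-sided $\Rightarrow$ Jacobi'' is more explicit than the paper's contrapositive, which simply asserts that regularity of $\exp_p$ at $v$ rules out the one-sided data; the paper leaves implicit precisely the step you isolate, namely that $d_\Gamma$-convergence of the geodesics to $\gamma$ forces the affine initial velocities to converge to $v$ in $T_pM$. Your concern about timelike velocities escaping along the null cone with bounded $g$-norm is a legitimate technical point and your two-step resolution (direction control from $d_F$-closeness near $p$, then $h$-boundedness via reverse Cauchy--Schwarz plus the $L_\tau$-bound) is plausible, though the direction-convergence claim would need to be argued carefully, e.g.\ by observing that the whole image $\eta_i([0,1])$ lies in an $\epsilon_i$-tube around $\gamma([0,1])$ and comparing with the straight-line behaviour of geodesics in a normal chart at $p$.
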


\begin{proof}
Let $p,q \in M$ be such that $p\ll q$, $\gamma$ a future-directed timelike geodesic from $p$ to $q$, and suppose that $q$ is one-sided conjugate to $p$ along $\gamma$.
Indirectly assuming that $p$ and $q$ are not conjugate along $\gamma$, then $\exp_p$ is regular around $v \coloneqq \exp_p^{-1}(q)$. In particular the exponential map is injective onto a neighborhood of $q$ and so there cannot exist points as given in the definition of one-sided conjugacy.

For the reverse implication we use a generalization, due to D.\ Szeghy~\cite{szeghy2008conjugate}, of a classical result by F.~W.\ Warner~\cite{warner1965conjugate}. Let $\gamma(t) = \exp_p(tv)$ be a timelike geodesic and let $q=\gamma(1)$ be conjugate to $p$ along
$\gamma$. Denoting by $C(p)\subseteq T_pM$ the conjugate locus of $p$ we thus have $v\in C(p)$. Let $U\subseteq T_pM$ be an open neighborhood of $v$ consisting entirely of timelike vectors. By 
\cite[Thm.\ 2.1]{szeghy2008conjugate}, together with~\cite[Rem.\ 4.1]{szeghy2008conjugate}, both the set $C^R(p)$ of
regular conjugate vectors and the set $C(p)'$ of conjugate vectors of locally minimal multiplicity
are open and dense in $C(p)$, hence so is their intersection $C^R(p)\cap C(p)'$.
Pick $v'\in C^R(p)\cap C(p)'\cap U$. Then by~\cite[Lem.\ 4.2]{szeghy2008conjugate}, $\exp_p$ is a Warner map on a suitable open neighborhood $V\subseteq U$ of $v'$ (note that by~\cite[Sec.\ 5]{szeghy2008conjugate}, the fact that $v'$ is timelike implies that $\mathcal{D}_{v'}$ is spacelike, so the assumptions of~\cite[Lem.\ 4.2]{szeghy2008conjugate} are indeed satisfied). Therefore (cf.\ \cite[Thm.\ 2.2]{szeghy2008conjugate} and the remarks following it), $\exp_p$ is not injective on any neighborhood of $v'$. In particular, it is not injective on $U$.
Consequently, there exist vectors $v_i, w_i \in U, v_i\neq w_i$ with $v_i, w_i\to v$ and $\exp_p(v_i)=\exp_p(w_i) \eqqcolon q_i$. The distinct timelike geodesics $\gamma_i(t) \coloneqq \exp_p(t v_i)$ and $\sigma_i(t) \coloneqq \exp_p(t w_i)$ then satisfy one-sided conjugacy.
\end{proof}

Next, we discuss a notion of conjugate point similar to one-sided conjugacy, but here the initial point is also allowed to vary.

\begin{Definition}[Symmetric conjugate points]
Let $p\ll q$. Then $p,q$ are called~\emph{symmetrically conjugate} along a future-directed timelike geodesic $\gamma \in \TGeo(X)$ running between them if there are $p_i \to p$ and $q_i \to q$ such that for each $i$ there are two distinct future-directed timelike geodesics $\gamma_i \neq \sigma_i \in \TGeo(X)$ from $p_i$ to $q_i$ such that $\gamma_i \to \gamma$ and $\sigma_i \to \gamma$ with respect to $d_{\Gamma}$.
\end{Definition}
Clearly, any one-sided conjugate point is also symmetrically conjugate. The converse is open even in the metric case, cf.\ \cite[Open Problem 2.5]{shankar2009conjugate}.

\begin{Remark}[Symmetric conjugate points - trivial case]
Let $X$ be a finitely $\tau$-measurable
Lorentzian pre-length space. If $\gamma, \sigma \in \TGeo(X)$ are different geodesics with the same endpoints $p \coloneqq \sigma(0) = \gamma(0)$ and $q \coloneqq \sigma(1) = \gamma(1)$, then $p,q$ are one-sided conjugate (and hence also symmetric conjugate).
\end{Remark}

\begin{Proposition}[Equivalence of notions]\label{prop:equivalence_of_conjugate_notions}
Let $(M,g)$ be a strongly causal smooth spacetime and let $\gamma$ be a future-directed timelike geodesic from $p$ to $q$. Then the following are equivalent:
\begin{enumerate}
    \item[(i)] $q$ is one-sided conjugate to $p$ along $\gamma$.
    \item[(ii)] $p,q$ are symmetrically conjugate along $\gamma$.
    \item[(iii)] $q$ is a conjugate point of $p$ along $\gamma$ in the classical sense.
\end{enumerate}
\end{Proposition}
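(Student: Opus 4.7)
The plan is to leverage what is already in place. The equivalence $(i) \Leftrightarrow (iii)$ is precisely Theorem~\ref{th:one-sided-vs-classical}, and $(i) \Rightarrow (ii)$ follows immediately by choosing the constant sequence $p_i \coloneqq p$ in the definition of symmetric conjugacy. Only the implication $(ii) \Rightarrow (iii)$ requires genuine work, and for this I would argue by contradiction.

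To establish $(ii) \Rightarrow (iii)$, suppose $p, q$ are symmetrically conjugate along $\gamma$ but $q$ is not a classical conjugate point of $p$ along $\gamma$. Writing $\gamma(t) = \exp_p(tv)$ with $v \in T_pM$, the non-conjugacy hypothesis translates to $d\exp_p|_v$ being a linear isomorphism, which by a short Jacobi field calculation is equivalent to the pair exponential map $E \colon TM \supseteq \mathcal{D} \to M \times M$, $E(x, w) \coloneqq (x, \exp_x w)$, having $dE|_{(p, v)}$ an isomorphism; indeed, identifying $T_{(p,v)}(TM) \cong T_pM \oplus T_pM$ via the Levi-Civita connection, one has $dE|_{(p,v)}(\xi, \eta) = (\xi, J(1))$, where $J$ is the Jacobi field along $\gamma$ with $J(0) = \xi$ and $(\nabla_t J)(0) = \eta$. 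By the inverse function theorem, $E$ is then a local diffeomorphism from an open neighborhood $\mathcal{W}$ of $(p, v)$ onto an open neighborhood of $(p, q)$, and in particular $E|_{\mathcal{W}}$ is injective. Now the symmetric conjugacy hypothesis furnishes sequences $p_i \to p$, $q_i \to q$ and distinct timelike geodesics $\gamma_i \neq \sigma_i$ from $p_i$ to $q_i$ with $\gamma_i, \sigma_i \to \gamma$ in $d_\Gamma$. Parametrizing these affinely as $\gamma_i(t) = \exp_{p_i}(tv_i)$ and $\sigma_i(t) = \exp_{p_i}(tw_i)$ on $[0,1]$, distinctness gives $v_i \neq w_i$ in $T_{p_i}M$, while $E(p_i, v_i) = (p_i, q_i) = E(p_i, w_i)$. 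Thus if we can show $(p_i, v_i), (p_i, w_i) \to (p, v)$ in $TM$, both points will eventually lie in $\mathcal{W}$, contradicting the injectivity of $E$ there.

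The main obstacle—also implicit in the proof of Theorem~\ref{th:one-sided-vs-classical}—is precisely this convergence of initial velocities. My plan is to extend $\gamma$ smoothly past $[0, 1]$ to an interval $[a, b]$ with $a < 0 \le 1 < b$, and then use the $d_\Gamma$-convergence of $\gamma_i, \sigma_i$ to $\gamma$, together with smooth dependence of ODE solutions on initial data, to argue that $\gamma_i$ and $\sigma_i$ likewise extend to $[a, b]$ for large $i$ and converge there in $C^0$. The Fréchet component of $d_\Gamma$ together with $\tau$-length convergence pins down the canonical affine parametrizations up to vanishing perturbations, so the extra care needed is essentially the bookkeeping of the monotone reparametrizations inherent in $d_F$. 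Once $C^0$-convergence on $[a,b]$ is in hand, Lemma~\ref{lem:geod_convergence} delivers $(p_i, v_i) \to (p, v)$ and $(p_i, w_i) \to (p, v)$ in $TM$, completing the contradiction.
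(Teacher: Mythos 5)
Your skeleton is right and matches the paper's: $(i)\Leftrightarrow(iii)$ is Theorem~\ref{th:one-sided-vs-classical}, $(i)\Rightarrow(ii)$ is immediate, and the work is in $(ii)\Rightarrow(iii)$. The paper handles $(ii)\Leftrightarrow(iii)$ by citing the Riemannian argument of Shankar--Sormani, whereas you give a self-contained argument via the pair exponential $E(x,w)=(x,\exp_x w)$ and the inverse function theorem; this is conceptually cleaner and more explicit. The Jacobi-field computation of $dE|_{(p,v)}$ and the resulting contradiction with local injectivity of $E$ are correct. But there is a genuine gap at the step you flag as ``the main obstacle'': you must show that $d_\Gamma$-convergence $\gamma_i,\sigma_i\to\gamma$ forces the initial velocities $(p_i,v_i),(p_i,w_i)\to(p,v)$ in $TM$, and your plan does not actually establish this.

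The proposed route is circular. You want to extend $\gamma$ past $[0,1]$ and then ``use smooth dependence of ODE solutions on initial data'' to get $C^0$-convergence on the larger interval and feed it into Lemma~\ref{lem:geod_convergence}; but smooth dependence on initial data requires the initial data to converge, which is precisely what is in question. Moreover, $d_\Gamma$-convergence only yields $C^0$-convergence of \emph{some} monotone reparametrizations $\gamma_i\circ\varphi_i$ to $\gamma$, not of the affine parametrizations that Lemma~\ref{lem:geod_convergence} requires, and the passage from one to the other is not mere bookkeeping: nothing a priori prevents $|v_i|_h$ (for a background Riemannian $h$) from escaping to infinity while $|v_i|_g=L_\tau(\gamma_i)$ stays bounded and the images stay Fréchet-close to $\gamma([0,1])$. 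To close this, one must invoke strong causality in an essential way: Fréchet-closeness confines the images to a compact tube around $\gamma([0,1])$, strong causality gives non-total-imprisonment and hence a uniform bound on the $h$-arclengths of causal curves in that tube, a parallel-transport estimate on the compact tube then bounds $|v_i|_h$, and finally a subsequence argument together with the uniqueness of the affine parametrization of a timelike geodesic by its endpoints (using that strong causality rules out self-intersections) identifies the limit as $v$. Without this, the contradiction with local injectivity of $E$ does not go through, since $\gamma_i,\sigma_i$ could in principle converge to $\gamma$ in $d_\Gamma$ while having initial velocities far from $v$.
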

\begin{proof}
By Theorem~\ref{th:one-sided-vs-classical}, (i)$\Leftrightarrow$(iii). As observed above, one-sided conjugate implies symmetric conjugate, i.e.\ (i)$\Rightarrow$(ii). Finally, (ii)$\Leftrightarrow$(iii) follows by the same 
proof as in~\cite[Thm.\ 2.6]{shankar2009conjugate}.
\end{proof}

\subsection{Families of timelike geodesics}
\label{subsection: Families of timelike geodesics}

\begin{Definition}[Continuous timelike families]
Let $\gamma\in \TGeo(X)$ be a future-directed timelike geodesic from $p=\gamma(0)$ to $q=\gamma(1)$. A map $F \colon U\times V\to \TGeo(X)$
with $U$ a neighborhood of $p$ and $V$ a neighborhood of $q$ is called a~\emph{timelike family about $\gamma$\/} if 
$U\ll V$ (i.e., $p' \ll q'$ for all $p' \in U$ and $q' \in V$) and $F(p',q')$ goes from $p'$ to $q'$ for each $(p',q')\in U\times V$
and $F(p,q)=\gamma$. 
We say that $F$ is~\emph{continuous about\/} $\gamma$ if $F$ is a continuous (w.r.t.\ $d_{\Gamma}$) map.
$F$ is called~\emph{continuous at\/} $\gamma$ if it is only required to be continuous (w.r.t.\ $d_{\Gamma}$) at the point $(p,q)$ itself.
\end{Definition}
Observe that since $\ll$ is an open relation (cf.~\cite[Prop.\ 2.13]{kunzinger2018lorentzian}), whenever $x\ll y$ there exist neighborhoods $U$ of $x$ and $V$ of $y$ such that $p' \ll q'$ for all $p' \in U$ and $q' \in V$.

\begin{Definition}[Unique timelike families]\label{def:unique_familiy}
Let $\gamma$ be a future-directed timelike geodesic. We say that there exists a unique timelike
family which is continuous about $\gamma$ resp.\ at $\gamma$ if
\begin{itemize}
\item[(i)] there exists a timelike family $F \colon U\times V \to \TGeo(X)$ that is continuous about $\gamma$ resp.\ at $\gamma$, and
\item[(ii)] if $F' \colon U' \times V' \to \TGeo(X)$ is another timelike family about $\gamma$ that is continuous about $\gamma$ resp.\ at $\gamma$, then there exist neighborhoods $U''\subseteq U\cap U'$ and $V''\subseteq V\cap V'$ of $\gamma(0)$
resp.\ $\gamma(1)$ such that $F$ coincides with $F'$ on $U''\times V''$.
\end{itemize}
\end{Definition}

Another way to express this is to say that there exists a unique germ of timelike
families around $(\gamma(0), \gamma(1))$ with the respective continuity property. Given a timelike family $F \colon U\times V \to \TGeo(X)$ about $\gamma$, we shall denote by $[F]$ the germ around $(\gamma(0), \gamma(1))$ determined by $F$. Note that if a timelike family is continuous about $\gamma$ and unique among timelike families that are continuous at $\gamma$, then it is also unique among timelike families that are continuous about $\gamma$.

A $C^0$-continuous family is a family as above that is continuous w.r.t.\ $d_F$ instead of $d_{\Gamma}$. As we saw before, if $X$ is localizable, locally weakly causally closed, strongly causal and locally maximizing, then these two notions coincide, cf.\ Proposition~\ref{Proposition: TopologyofGeoX}. 

The following basic observation will repeatedly be used below.

\begin{Lemma}\label{lem:no-unique-fam-conj-points} Let $\gamma\in \TGeo(X)$ connect the points $p$ and $q$ and suppose that there exists a timelike family $F \colon U\times V \to \TGeo(X)$ continuous at $\gamma$. If there isn't a unique timelike family continuous at $\gamma$, then $p$ and $q$ are symmetrically conjugate. 
\end{Lemma}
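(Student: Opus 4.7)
The plan is to directly unwrap the negation of uniqueness in Definition~\ref{def:unique_familiy}(ii) and feed the resulting sequences of endpoints and geodesics straight into the definition of symmetric conjugacy. Let $F \colon U\times V \to \TGeo(X)$ be the given timelike family continuous at $\gamma$. Non-uniqueness then means that there exists a second timelike family $F' \colon U'\times V' \to \TGeo(X)$ continuous at $\gamma$ such that $F$ and $F'$ have different germs at $(p,q)$; equivalently, for every pair of neighborhoods $U'' \subseteq U\cap U'$ of $p$ and $V'' \subseteq V\cap V'$ of $q$, there is some $(p'',q'') \in U''\times V''$ with $F(p'',q'') \neq F'(p'',q'')$ in $\TGeo(X)$.

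Next, I would exploit the fact that $(X,d)$ is metric, so there exists a countable neighborhood basis $\{U_n\times V_n\}_{n\in\N}$ of $(p,q)$ shrinking to $(p,q)$ with $U_n\subseteq U\cap U'$ and $V_n\subseteq V\cap V'$. Applying the non-uniqueness to each $U_n\times V_n$, I obtain sequences $p_n\to p$ and $q_n\to q$ together with two distinct future-directed timelike geodesics $\gamma_n \coloneqq F(p_n,q_n)$ and $\sigma_n \coloneqq F'(p_n,q_n)$ in $\TGeo(X)$ from $p_n$ to $q_n$. By construction $\gamma_n \neq \sigma_n$ as elements of $\TGeo(X)$.

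Finally, continuity of $F$ and $F'$ at $(p,q)$ with respect to $d_\Gamma$ yields $\gamma_n \to F(p,q) = \gamma$ and $\sigma_n \to F'(p,q) = \gamma$ in $d_\Gamma$. Thus the quadruple $(p_n, q_n, \gamma_n, \sigma_n)$ satisfies verbatim the requirements in the definition of symmetric conjugacy, so $p$ and $q$ are symmetrically conjugate along $\gamma$.

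There really is no main obstacle here: the argument is a one-step translation between the negation of the germ-uniqueness condition and the defining sequences of symmetric conjugacy, and the only mild subtlety is the use of a countable basis of neighborhoods of $(p,q)$ in the product of two metric spaces to extract actual sequences rather than nets. In particular, no assumption beyond finite $\tau$-measurability of $X$ (already in force throughout Subsection~\ref{Subsection: symmetric conjugate points}) is needed for the proof.
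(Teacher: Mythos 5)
Your argument is correct and follows essentially the same route as the paper: both unwind the negation of germ-uniqueness into a shrinking sequence of neighborhoods, pick a point in each where the two families disagree, and invoke $d_\Gamma$-continuity at $(p,q)$ to see that both resulting sequences of timelike geodesics converge to $\gamma$. The paper arranges the $d_\Gamma$-control on the families directly by shrinking the neighborhoods, whereas you deduce it from sequential continuity after the fact, but this is an immaterial difference.
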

\begin{proof} Our assumptions mean that (the second alternative in) (i) in Definition~\ref{def:unique_familiy} is satisfied, but (ii) is not. Hence there exists some timelike family $F' \colon U'\times V' \to \TGeo(X)$ continuous at $\gamma$ that does not coincide with $F$ on sufficiently small neighborhoods of $(p,q)$. For each $i\in \N$ pick neighborhoods $U_i\subseteq U\cap U'$ of $p$ and $V_i\subseteq V\cap V'$ such that $F|_{U_i\times V_i} \ne F'|_{U_i\times V_i}$, as well as
$U_i\subseteq B^d_{1/i}(p)$ and $V_i\subseteq B^d_{1/i}(q)$. Furthermore, by continuity of $F$ and $F'$ at $(p,q)$ we may assume 
$U_i$ and $V_i$ small enough to grant that $F(U_i\times V_i)
\subseteq B^{d_\Gamma}_{1/i}(\gamma)$ and also $F'(U_i\times V_i)
\subseteq B^{d_\Gamma}_{1/i}(\gamma)$. Pick $(p_i,q_i) \in U_i\times V_i$ such that
$\gamma_i \coloneqq F(p_i,q_i)\ne F'(p_i,q_i) \eqqcolon \sigma_i$. Then $\gamma_i$ and $\sigma_i$ are timelike
geodesics as required to establish that $p$ and $q$ are symmetrically conjugate.
\end{proof}

As an immediate consequence we obtain the following analogue of~\cite[Thm.\ 3.7]{shankar2009conjugate}
\begin{Corollary}\label{cor:not-sym-con-unique-germ}
If $p, q\in X$ are not symmetrically conjugate along $\gamma\in \TGeo(X)$ and if there exists a timelike family $F \colon U\times V \to \TGeo(X)$ continuous at $\gamma$, then the germ of $F$ at $(p,q)$ is the unique germ of a timelike family continuous at $\gamma$. 
\end{Corollary}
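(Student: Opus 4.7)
The plan is to derive the corollary as an essentially immediate contrapositive of Lemma \ref{lem:no-unique-fam-conj-points}. The hypothesis supplies a timelike family $F\colon U\times V\to \TGeo(X)$ continuous at $\gamma$, so the existence clause (i) of Definition \ref{def:unique_familiy} (in its ``continuous at $\gamma$'' version) is already witnessed. What remains is to verify clause (ii), namely that any competing germ agrees with $[F]$ on some neighborhood of $(p,q)$; once this is done, $[F]$ is by definition the unique germ of a timelike family continuous at $\gamma$.

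I would argue by contradiction. Suppose $[F]$ were not unique, i.e., there existed another timelike family $F'\colon U'\times V'\to \TGeo(X)$ continuous at $\gamma$ such that the germs $[F]$ and $[F']$ at $(p,q)$ differ. Then no matter how small we shrink the neighborhoods of $p$ and $q$, $F$ and $F'$ fail to coincide, so the scenario ``there exists a timelike family continuous at $\gamma$, but no unique such family'' of Lemma \ref{lem:no-unique-fam-conj-points} is realized. The lemma then produces sequences $p_i\to p$, $q_i\to q$ and pairs of distinct timelike geodesics $\gamma_i\ne\sigma_i$ from $p_i$ to $q_i$ with $\gamma_i,\sigma_i\to\gamma$ in $d_\Gamma$, which by definition means $p$ and $q$ are symmetrically conjugate along $\gamma$. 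This contradicts the assumption of the corollary.

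Consequently, clause (ii) must hold for $F$, and $[F]$ is the unique germ of a timelike family continuous at $\gamma$. There is no genuine obstacle in this argument: all of the technical content has been absorbed into Lemma \ref{lem:no-unique-fam-conj-points}, and the corollary is just its contrapositive packaged in germ language.
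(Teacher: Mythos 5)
Your proposal is correct and matches the paper's intent exactly: the paper presents Corollary~\ref{cor:not-sym-con-unique-germ} as an immediate consequence of Lemma~\ref{lem:no-unique-fam-conj-points} with no separate argument, and your contrapositive reasoning is precisely what makes it immediate.
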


Collecting the previous results, we obtain:
\begin{Proposition}\label{prop:symm-conj-characterization} Let $\gamma\in \TGeo(X)$ connect $p$ and $q$ and suppose there exists a timelike family $F \colon U\times V \to \TGeo(X)$ continuous at $\gamma$. Then the following are equivalent:
\begin{itemize}
\item[(i)] $p$ and $q$ are not symmetrically conjugate along $\gamma$.
\item[(ii)] $F$ is a representative of the unique germ of timelike families continuous at $\gamma$.
\end{itemize}
\end{Proposition}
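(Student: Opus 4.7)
\medskip

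The implication (i) $\Rightarrow$ (ii) will be immediate from Corollary~\ref{cor:not-sym-con-unique-germ}: since $p, q$ are not symmetrically conjugate and $F$ is a timelike family continuous at $\gamma$, the germ $[F]$ is the unique germ of a timelike family continuous at $\gamma$, so $F$ represents it.

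For (ii) $\Rightarrow$ (i), the plan is to argue by contraposition. Assume $p, q$ are symmetrically conjugate along $\gamma$, and pick witnessing sequences $p_i \to p$, $q_i \to q$ together with distinct timelike geodesics $\gamma_i, \sigma_i \in \TGeo(X)$ from $p_i$ to $q_i$ satisfying $\gamma_i, \sigma_i \to \gamma$ in $d_\Gamma$. After extracting a subsequence, I would assume the pairs $(p_i, q_i)$ are pairwise distinct, all contained in $U \times V$, and different from $(p,q)$. For each $i$, I would pick $\tau_i \in \{\gamma_i, \sigma_i\} \setminus \{F(p_i, q_i)\}$, which is possible since $\gamma_i \neq \sigma_i$; note that $\tau_i \to \gamma$.

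The key step will be to produce a second timelike family $F' \colon U \times V \to \TGeo(X)$ continuous at $\gamma$ whose germ differs from $[F]$. I would set $F'(p_i, q_i) \coloneqq \tau_i$ for each $i$ and $F'(p', q') \coloneqq F(p', q')$ elsewhere in $U \times V$. Then $F'(p, q) = \gamma$ and $F'$ is manifestly a timelike family about $\gamma$. The only nontrivial check is continuity of $F'$ at the single point $(p, q)$: given any sequence $(p_n', q_n') \to (p, q)$, I would split it according to membership in the discrete set $S \coloneqq \{(p_i, q_i) : i \in \N\}$. The part outside $S$ is sent by $F' = F$ into any prescribed $d_\Gamma$-neighborhood of $\gamma$ for large $n$, by continuity of $F$ at $(p,q)$. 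The part inside $S$, say $(p_{i_n}, q_{i_n})$, must satisfy $i_n \to \infty$ (since $(p,q)$ is the only accumulation point of the distinct sequence $S$), hence maps to $\tau_{i_n} \to \gamma$.

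Since every neighborhood of $(p, q)$ contains cofinitely many pairs $(p_i, q_i)$ on which $F \neq F'$, the germs $[F]$ and $[F']$ differ, contradicting the uniqueness assertion in (ii). The main (and really only) subtlety will be the continuity verification for $F'$, which succeeds \emph{precisely} because "continuous at $\gamma$" demands continuity only at the point $(p,q) \in U \times V$, not throughout the domain — it is exactly this flexibility that symmetric conjugacy exploits to produce a competing germ.
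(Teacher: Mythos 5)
Your implication (i)~$\Rightarrow$~(ii) is precisely the paper's argument: the paper states the Proposition with the one-line ``Collecting the previous results,'' and this direction is exactly Corollary~\ref{cor:not-sym-con-unique-germ}. Note, though, that Lemma~\ref{lem:no-unique-fam-conj-points} and Corollary~\ref{cor:not-sym-con-unique-germ} are contrapositives of one another, so the ``previous results'' only furnish (i)~$\Rightarrow$~(ii); your attempt to supply an explicit argument for (ii)~$\Rightarrow$~(i) is therefore going beyond what the paper spells out, which is to your credit.

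However, your proof of (ii)~$\Rightarrow$~(i) has a genuine gap in the very first reduction. You assert that ``after extracting a subsequence'' you may assume the pairs $(p_i,q_i)$ are pairwise distinct and different from $(p,q)$. This is only possible if infinitely many of the $(p_i,q_i)$ already differ from $(p,q)$. The definition of symmetric conjugacy permits the constant sequences $p_i=p$, $q_i=q$: in that case the witnessing data consist of infinitely many distinct geodesics $\gamma_i\neq\sigma_i$ \emph{from $p$ to $q$ themselves} accumulating at $\gamma$ in $d_\Gamma$. Your construction of $F'$ then collapses, since $S=\{(p,q)\}$ and the only place you would want to set $F'\neq F$ is the point $(p,q)$ itself, where $F'(p,q)=\gamma=F(p,q)$ is forced by the definition of a timelike family about $\gamma$. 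In that scenario your $F'$ equals $F$, so no competing germ is produced. The ``flexibility'' you identify at the end (continuity demanded only at $(p,q)$) does not help here, because a germ is determined by values on a punctured neighborhood of $(p,q)$, and in the constant-endpoint case you have no off-diagonal points at which to deviate. To close the gap you would either need to show that symmetric conjugacy can always be witnessed by sequences with $(p_i,q_i)\neq(p,q)$ infinitely often, or handle the constant case by a different argument; neither is done in your proposal (nor, as far as the written text shows, in the paper itself).

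Where your reduction does hold (infinitely many $(p_i,q_i)\neq(p,q)$), the remaining steps are correct: after passing to a subsequence the $(p_i,q_i)$ can be made pairwise distinct (each value can occur only finitely often since the sequence converges to $(p,q)$), the choice of $\tau_i\in\{\gamma_i,\sigma_i\}\setminus\{F(p_i,q_i)\}$ is always possible because $\gamma_i\neq\sigma_i$, and your continuity check for $F'$ at $(p,q)$ via the dichotomy ``inside/outside $S$'' is sound, as $(p,q)$ is indeed the unique accumulation point of the discrete set $S$.
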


\begin{Definition}[Embeddability] A sequence $(\gamma_i)_{i\in \N}$ in $\TGeo(X)$ connecting points $p_i$ and $q_i$ and converging (with respect to $d_\Gamma$) to a curve $\gamma\in \TGeo(X)$ is~\emph{embeddable\/} if there exists a timelike family $F \colon U\times V \to \TGeo(X)$ which is continuous at $\gamma$ such that, for $i$ large, $(p_i,q_i)\in U\times V$ and $F(p_i,q_i) = \gamma_i$. It is~\emph{strongly embeddable\/} if the timelike family can be chosen to be continuous about $\gamma$.
\end{Definition}
In particular a constant sequence $(\gamma)$ is embeddable resp.\ strongly embeddable if and only if there exists a timelike family about $\gamma$ that is continuous at $\gamma$ resp.\ about $\gamma$.
In this case we say that $\gamma$ itself is (strongly) embeddable. 

\begin{Proposition}\label{prop:embed-char}
Let $\gamma\in \TGeo(X)$ connect $p$ to $q$. Then the following are equivalent:
\begin{itemize}
\item[(i)] $\gamma$ is embeddable.
\item[(ii)] For all sequences $p_i\to p$ and $q_i\to q$ there exists $i_0\in \N$ such that for all $i\ge i_0$ there is a geodesic $\gamma_i\in \TGeo(X)$ from $p_i$ to $q_i$ such that 
$\gamma_i\to \gamma$ with respect to $d_\Gamma$.
\end{itemize}
\end{Proposition}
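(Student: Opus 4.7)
The implication (i) $\Rightarrow$ (ii) is immediate from the definitions. By embeddability of $\gamma$, there exists a timelike family $F \colon U \times V \to \TGeo(X)$ which is continuous at $\gamma$ and satisfies $F(p,q) = \gamma$. Given any sequences $p_i \to p$ and $q_i \to q$, for some $i_0 \in \N$ we have $(p_i, q_i) \in U \times V$ for all $i \ge i_0$; setting $\gamma_i \coloneqq F(p_i, q_i)$ produces the desired timelike geodesics from $p_i$ to $q_i$, and continuity of $F$ at $(p,q)$ in $d_\Gamma$ yields $\gamma_i \to \gamma$.

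For (ii) $\Rightarrow$ (i), the plan is a shell-based construction. First, using openness of $\ll$ (cf.~\cite[Prop.\ 2.13]{kunzinger2018lorentzian}), fix open neighborhoods $U_0$ of $p$ and $V_0$ of $q$ with $U_0 \ll V_0$. The key intermediate claim is: for every $\varepsilon > 0$ there exist open neighborhoods $U_\varepsilon \subseteq U_0$ of $p$ and $V_\varepsilon \subseteq V_0$ of $q$ such that every pair $(p', q') \in U_\varepsilon \times V_\varepsilon$ admits a timelike geodesic from $p'$ to $q'$ within $d_\Gamma$-distance $\varepsilon$ of $\gamma$. I would prove this by contradiction: if it fails for some $\varepsilon > 0$, a diagonal extraction yields a sequence $(p_n, q_n) \to (p, q)$ such that no timelike geodesic from $p_n$ to $q_n$ lies within $d_\Gamma$-distance $\varepsilon$ of $\gamma$ (possibly because no connecting geodesic exists at all). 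Either alternative directly contradicts (ii).

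Applying the claim with $\varepsilon_n = 1/n$, I choose nested neighborhoods $U_n \times V_n \subseteq U_{n-1} \times V_{n-1}$ with $U_n \subseteq B_{1/n}^d(p)$ and $V_n \subseteq B_{1/n}^d(q)$, ensuring $\bigcap_n (U_n \times V_n) = \{(p,q)\}$. I then define $F \colon U_1 \times V_1 \to \TGeo(X)$ by setting $F(p,q) \coloneqq \gamma$ and, for each point in the shell $(U_n \times V_n) \setminus (U_{n+1} \times V_{n+1})$, choosing $F(p',q')$ to be some timelike geodesic from $p'$ to $q'$ within $d_\Gamma$-distance $1/n$ of $\gamma$ (via the axiom of choice applied shell by shell). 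The inclusion $U_1 \times V_1 \subseteq U_0 \times V_0$ together with $U_0 \ll V_0$ makes $F$ a bona fide timelike family about $\gamma$. For continuity of $F$ at $(p,q)$: given $\varepsilon > 0$, choose $N$ with $1/N < \varepsilon$; any $(p_k, q_k) \neq (p,q)$ inside $U_N \times V_N$ lies in a unique shell at level $m \ge N$, giving $d_\Gamma(F(p_k, q_k), \gamma) < 1/m \le 1/N < \varepsilon$.

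The main obstacle is the pointwise selection of geodesics in (ii) $\Rightarrow$ (i): condition (ii) only supplies the \emph{existence} of some sequence of connecting geodesics converging to $\gamma$, not that every choice of connecting geodesics converges. The shell decomposition resolves this by enforcing a uniform $d_\Gamma$-error bound on successively tighter neighborhoods of $(p, q)$, which is exactly what is needed for continuity of $F$ at the single point $(p,q)$ (note that no continuity is demanded elsewhere, consistently with the weaker notion of \emph{embeddability} in Definition~\ref{def:unique_familiy}).
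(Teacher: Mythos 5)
Your proof is correct and follows essentially the same route as the paper: the (i)$\Rightarrow$(ii) direction is identical, and for (ii)$\Rightarrow$(i) you prove the same intermediate claim (existence of shrinking neighborhoods $U_\varepsilon \times V_\varepsilon$ with a uniform $d_\Gamma$-error bound) by the same contradiction argument, then assemble a timelike family via the same nested-shell decomposition $(U_n \times V_n) \setminus (U_{n+1} \times V_{n+1})$ and verify continuity at $(p,q)$ in the same way. The only cosmetic differences are that you invoke a "diagonal extraction" where a plain negation of the claim already produces the offending sequence, and you make the use of choice and the $U_0 \ll V_0$ bookkeeping explicit where the paper folds it into the construction of the $U_k, V_k$.
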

\begin{proof}
(i)$\Rightarrow$(ii): Let $F \colon U \times V$ be a timelike family continuous at $\gamma$. Then given sequences as in (ii), for $i$ large
we may set $\gamma_i \coloneqq F(p_i,q_i)$. \\
(ii)$\Rightarrow$(i): We first show that for each $k\in \N$ there exist neighborhoods $U_k$ of $p$ and $V_k$ of $q$ such that for all $(p',q')\in U_k\times V_k$ there exists a timelike geodesic $\sigma\in \TGeo(X)$ from $p'$ to $q'$ with $d_\Gamma(\gamma,\sigma)<\frac{1}{k}$. Indeed, if this were not the case, then for some $k\in \N$ there would exist a sequence $(p_i,q_i)\to (p,q)$ such that
any timelike geodesic connecting $p_i$ to $q_i$ would have $d_\Gamma$-distance from $\gamma$ greater than $\frac{1}{k}$, contradicting (ii).
Since the relation $\ll$ is open, we can arrange that $U_k\ll V_k$. Furthermore, we may assume that $U_{k+1}\subseteq U_k$ and $V_{k+1}\subseteq V_k$ for all $k$ and that $\bigcap_k U_k = \{p\}$, as well as $\bigcap_k V_k = \{q\}$. Now set $W_k \coloneqq (U_k\times V_k)\setminus (U_{k+1}\times V_{k+1})$ for each $k$ and define 
\[
F \colon U_1 \times V_1 \to \TGeo(X)
\]
as follows: Set $F(p,q) \coloneqq \gamma$, and for each $(p',q')\in (U_1\times V_1)\setminus \{(p,q)\}$ take the unique
$k\in \N$ such that $(p',q')\in W_k$ and define $F(p',q')$ to be any timelike geodesic from $p'$ to $q'$ satisfying $d_\Gamma(\gamma,F(p',q'))<\frac{1}{k}$. Then $F$ is a timelike family, and it is also continuous at $\gamma$: if $(p_i,q_i)\to (p,q)$, then for any $k\in \N$ there exists some $i_0$
such that $(p_i,q_i)\in U_k\times V_k$ for all $i\ge i_0$, implying that $d_\Gamma(\gamma,F(p_i,q_i))<\frac{1}{k}$.
\end{proof}

\begin{Lemma} Suppose that every $d_\Gamma$-convergent sequence $(\gamma_i)_{i\in \N}$ in $\TGeo(X)$ is embeddable.
If for every future-directed timelike geodesic $\gamma \in \TGeo(X)$ such that $L_{\tau}(\gamma) < R$ there exists a unique timelike family about $\gamma$ that is continuous at $\gamma$, then it is even true that for every future-directed timelike geodesic $\gamma$ with $L_{\tau}(\gamma) < R$ there exists a timelike family that is continuous about $\gamma$.
\end{Lemma}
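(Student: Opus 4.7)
The plan is to leverage the uniqueness hypothesis pointwise in order to pin down $F$ at each point near $(p,q) = (\gamma(0),\gamma(1))$. Applying the embeddability hypothesis to the constant sequence $\gamma$ (equivalently, Proposition~\ref{prop:embed-char}) first gives me a timelike family $F \colon U\times V\to \TGeo(X)$ continuous at $\gamma$, which by the uniqueness hypothesis at $\gamma$ represents the unique germ of continuous-at families there. Continuity of $F$ at $(p,q)$ with respect to $d_\Gamma$ (which controls $L_\tau$) then allows me to shrink $U\times V$ so that $L_\tau(F(p',q'))<R$ throughout, keeping every value of $F$ within the regime in which the hypothesis is available.

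The central claim is that on this shrunken domain $F$ is continuous at every $(p_0,q_0)\in U\times V$. I would fix such a point, write $\sigma_0 = F(p_0,q_0)$, and apply the hypothesis at $\sigma_0$ (which satisfies $L_\tau(\sigma_0)<R$) to produce the unique continuous-at family $G\colon U_0\times V_0\to \TGeo(X)$ with $G(p_0,q_0)=\sigma_0$. Since $G$ is already continuous at $(p_0,q_0)$, it suffices to show that $F$ coincides with $G$ on some neighborhood of $(p_0,q_0)$. To establish this coincidence I would argue by contradiction: assume there exists $(p_i,q_i)\to(p_0,q_0)$ in $(U\cap U_0)\times(V\cap V_0)$ with $F(p_i,q_i)\neq G(p_i,q_i)$ for every $i$. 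For large $i$ both values are timelike geodesics from $p_i$ to $q_i$ of $L_\tau$-length less than $R$, the first by the shrinking of $U\times V$ and the second because $L_\tau(G(p_i,q_i))\to L_\tau(\sigma_0)<R$ by continuity of $G$ at $\sigma_0$.

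Hence $p_i$ and $q_i$ are connected by two distinct timelike geodesics of $L_\tau$-length less than $R$. Invoking the trivial-case remark on symmetric conjugate points in Subsection~\ref{Subsection: symmetric conjugate points}, I conclude that $p_i,q_i$ are symmetrically conjugate along $F(p_i,q_i)$. Because the hypothesis supplies a continuous-at family at $F(p_i,q_i)$ (as $L_\tau(F(p_i,q_i))<R$), Proposition~\ref{prop:symm-conj-characterization} then implies that this family is not the unique germ of continuous-at families at $F(p_i,q_i)$, contradicting the uniqueness hypothesis applied at $F(p_i,q_i)$. This contradiction forces $F=G$ on a neighborhood of $(p_0,q_0)$, which transfers the continuity of $G$ at $(p_0,q_0)$ to $F$; since $(p_0,q_0)$ was arbitrary, $F$ itself is the desired family continuous about $\gamma$. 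The step I expect to be the most delicate is the correct invocation of the trivial-case remark to promote "two distinct timelike geodesics between the same endpoints" to symmetric conjugacy; once that is in hand, the rest is careful bookkeeping with the domains of $F$, $G$, $U_0$ and $V_0$, together with the twofold use of the uniqueness hypothesis (at $\gamma$ and at each intermediate $\sigma_0 = F(p_0,q_0)$).
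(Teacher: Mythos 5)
Your approach is genuinely different from the paper's, which argues by contradiction as follows: assuming $F$ is not continuous on any neighborhood of $(\gamma(0),\gamma(1))$, the paper extracts points $(p_{ij_i},q_{ij_i})\to(\gamma(0),\gamma(1))$ at which the values of $F$ and of local continuous-at families $F_i$ differ, shows that both $F(p_{ij_i},q_{ij_i})$ and $F_i(p_{ij_i},q_{ij_i})$ converge to $\gamma$, and then crucially invokes the embeddability hypothesis to package the sequence $(F_i(p_{ij_i},q_{ij_i}))_i$ into a second timelike family $F'$ continuous at $\gamma$ whose germ differs from that of $F$, contradicting uniqueness at $\gamma$.

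The gap in your argument is the step passing from ``two distinct timelike geodesics $F(p_i,q_i)\neq G(p_i,q_i)$ from $p_i$ to $q_i$'' to ``$p_i,q_i$ are symmetrically conjugate along $F(p_i,q_i)$''. By definition, symmetric (and one-sided) conjugacy of $p_i,q_i$ along $F(p_i,q_i)$ requires \emph{sequences} of pairs of distinct timelike geodesics, both of which converge to $F(p_i,q_i)$ in $d_\Gamma$; a single pair of distinct geodesics does not provide this, and in particular the fixed geodesic $G(p_i,q_i)$ is not a sequence converging to $F(p_i,q_i)$. The trivial-case remark in Subsection~\ref{Subsection: symmetric conjugate points} does not specify a reference geodesic and, read literally against the definitions, does not deliver what you need: for two fixed distinct geodesics $\gamma\neq\sigma$ from $p$ to $q$, the natural constant choices $\gamma_i\equiv\gamma$, $\eta_i\equiv\sigma$ fail because $\sigma\not\to\gamma$ (indeed, two timelike geodesics wrapping a flat Lorentzian cylinder between the same endpoints are not conjugate in the classical sense, so by Proposition~\ref{prop:equivalence_of_conjugate_notions} not symmetrically conjugate either). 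A tell-tale symptom is that your proof never uses the embeddability hypothesis beyond the constant sequence, which is already covered by the other assumption, whereas the remark immediately following the lemma states that this hypothesis appears to be needed to complete the proof.
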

\begin{proof}
Let $F \colon U \times V \to\TGeo(X)$ be a timelike family about a future-directed timelike geodesic $\gamma \colon [0,1] \to X$ that is continuous at $(\gamma(0),\gamma(1))$. By $d_{\Gamma}$-continuity of $F$ at $(\gamma(0),\gamma(1))$, we can shrink $U$ and $V$ to assume w.l.o.g.\ that all geodesics $F(x,y)$ satisfy $L_{\tau}(F(x,y)) < R$, $x \in U$, $y \in V$.\\
Suppose there are no neighborhoods $U',V'$ of $\gamma(0),\gamma(1)$ (with $U'\ll V'$) such that $F|_{U' \times V'}$ is continuous. Then there are sequences $p_i \to \gamma(0)$ and $q_i \to \gamma(1)$ such that $F$ is not continuous at any $(p_i,q_i)$. Let $\gamma_i \coloneqq F(p_i,q_i)$. So for any $i$ there are $\varepsilon_i>0$ and sequences $p_{ij} \to p_i$ and $q_{ij} \to q_i$ such that
\begin{align}\label{eq:nolim}
    d_\Gamma(F(p_{ij}, q_{ij}),F(p_i,q_i))>\varepsilon_i.
\end{align}
Since $L_{\tau}(\gamma_i) < R$, for each $\gamma_i$ there is a timelike family $F_i$ about $\gamma_i$ continuous at its endpoints $(p_i,q_i)$. In particular,
\begin{align}
    \lim_j F_i(p_{ij},q_{ij}) = F_i(p_i,q_i)=\gamma_i = F(p_i,q_i).
\end{align}
So by \eqref{eq:nolim} for each $i$ there is $j_i$ such that for all $j \geq j_i$,
\begin{align}
    d_\Gamma(F(p_{ij},q_{ij}), F_i(p_{ij},q_{ij})) > \varepsilon_i/2.
\end{align}
We can choose $j_i$ larger to guarantee that 
\begin{align}
    d_{\Gamma}(F_i(p_{ij_i},q_{ij_i}),\gamma_i) < 1/i
\end{align}
due to the convergence noted above, and also $(p_{ij_i},q_{ij_i}) \to (\gamma(0),\gamma(1))$ by a diagonal argument. Thus we have two distinct sequences of future-directed timelike geodesics, namely $F(p_{ij_i},q_{ij_i})$ and $F_i(p_{ij_i},q_{ij_i})$ both converging to $\gamma$, where for the second convergence we use
\begin{align}
 d_\Gamma(F_i(p_{ij_i}, q_{i,j_i}),\gamma)\leq 1/i+d_\Gamma(F(p_i,q_i),F(p,q)).
\end{align}
Now embed the sequence $(F_i(p_{ij_i}, q_{i,j_i}))_i$ of geodesics into a timelike family $F'$ that is continuous at $\gamma$. Then $[F]\ne [F']$, contradicting our assumption. 
\end{proof}

\begin{Remark} The previous result is a direct analogue of~\cite[Lem.\ 3.4]{shankar2009conjugate}. Note, however, that the embeddability assumption we make is absent from that result, although we are under the impression that it is needed to finish the proof.  
\end{Remark}

\begin{Proposition}\label{prop:embeddable_for_spacetimes} Let $(M,g)$ be a strongly causal smooth spacetime. Let $(\gamma_i)_{i\in\mathbb{N}}$ be a sequence of timelike geodesics in $M$ converging (with respect to $d_{\Gamma}$) to a limit geodesic $\gamma$. Let $p = \gamma(0)$ and $q = \gamma(1)$. 
If $p$ and $q$ are not conjugate along $\gamma$, then the sequence $(\gamma_i)$ is strongly embeddable.
\end{Proposition}
\begin{proof} 
Let $v_i = \gamma_i'(0) \in TM$ and $v = \gamma'(0) \in T_pM$. By Lemma \ref{lem:geod_convergence},
we have $v_i \to v$. Consider the map $E: TM \supseteq \mathcal{D} \to M\times M$ defined on
a suitable open neighbourhood $\mathcal{D}$ of the $0$-section in $TM$ by 
\[ E(w) = (\pi(w), \exp_{\pi(w)}(w)), \]
with $\pi: TM \to M$ the bundle projection. Then the differential of $E$ at $v$ takes the following lower-triangular block form:
\[
    T_vE \cong \begin{pmatrix}
    \mathrm{id} & 0 \\
    \star & T_v\exp_p
    \end{pmatrix}.
\]
As $p$ and $q$ are not conjugate along $\gamma$, $T_v\exp_p$ is invertible, implying that $E$ is a diffeomorphism from a neighbourhood $W \subset TM$ of $v$ onto a neighbourhood $U \times V$ of $(p, q)$ in $M\times M$. Let $H: U \times V \to W$ be the smooth inverse of $E|_{W}$.

We define the timelike family $F: U \times V \to \TGeo(M)$ by
\[ F(x, y)(t) := \exp_x(t \cdot H(x, y)). \]
This family is continuous about $\gamma$. Moreover, since $\gamma_i \to \gamma$, for sufficiently large $i$ the initial velocities $v_i$ lie in $W$ and the endpoints $(p_i, q_i)$ lie in $U \times V$. Consequently, $H(p_i, q_i) = v_i$, and $F(p_i, q_i)$ is exactly the geodesic $\gamma_i$. Thus, the sequence is strongly embeddable.
\end{proof}

The following result is a direct analogue of \cite[Prop.\ 3.5]{shankar2009conjugate}.
\begin{Theorem}\label{thm:classical-conj-vs-unique-family}
Let $(M,g)$ be a smooth strongly causal spacetime and let $\gamma \colon [0, 1] \to M$ be a future-directed timelike geodesic connecting $p$ to $q$. Then the following are equivalent:
\begin{itemize}
\item[(i)] $p$ and $q$ are not conjugate (in the classical sense) along $\gamma$.
\item[(ii)] There exists a timelike family which is continuous about $\gamma$ and which is
unique among all timelike families that are continuous at $\gamma$.
\end{itemize}
\end{Theorem}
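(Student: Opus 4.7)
The plan is to reduce the theorem to three earlier results: the equivalence of classical, one-sided, and symmetric conjugacy in strongly causal spacetimes (Proposition~\ref{prop:equivalence_of_conjugate_notions}), the strong embeddability of convergent geodesic sequences in smooth spacetimes (Proposition~\ref{prop:embeddable_for_spacetimes}), and the germ-uniqueness criterion for pairs of points that are not symmetrically conjugate (Corollary~\ref{cor:not-sym-con-unique-germ}).

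For (i)$\Rightarrow$(ii), we assume $p$ and $q$ are not classically conjugate along $\gamma$, so that $\exp_p$ is a local diffeomorphism around $v \coloneqq \exp_p^{-1}(q)$. Equivalently, the universal exponential map $E \colon (p',w) \mapsto (p',\exp_{p'}(w))$ is a local diffeomorphism at $(p,v)$. Shrinking and composing its local inverse with projection to $T_{p'}M$ produces a smooth map $(p',q') \mapsto w(p',q')$ on neighborhoods $U \ni p$, $V \ni q$ (chosen so that $U \ll V$ and $w(p',q')$ is timelike), with $\exp_{p'}(w(p',q'))=q'$. Define $F(p',q')(t) \coloneqq \exp_{p'}(t\,w(p',q'))$. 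Smooth dependence on initial data (Lemma~\ref{lem:geod_convergence}) gives $d_F$-continuity of $F$, and Proposition~\ref{Proposition: Lg vs Ltau} together with the continuous dependence of $L_g$ on $C^0$-close geodesics upgrades this to $d_\Gamma$-continuity; thus $F$ is a timelike family continuous about $\gamma$. By Proposition~\ref{prop:equivalence_of_conjugate_notions}, $p,q$ are not symmetrically conjugate along $\gamma$, so Corollary~\ref{cor:not-sym-con-unique-germ} yields that the germ of $F$ at $(p,q)$ is the unique germ of a timelike family continuous at $\gamma$.

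For (ii)$\Rightarrow$(i) we argue by contradiction: suppose (ii) holds but $p$ and $q$ are classically conjugate along $\gamma$. By Proposition~\ref{prop:equivalence_of_conjugate_notions} they are then symmetrically conjugate, so there exist $p_i \to p$, $q_i \to q$ and pairs of distinct timelike geodesics $\gamma_i \neq \sigma_i \in \TGeo(M)$ from $p_i$ to $q_i$ with $\gamma_i \to \gamma$ and $\sigma_i \to \gamma$ in $d_\Gamma$. Proposition~\ref{prop:embeddable_for_spacetimes} embeds $(\gamma_i)$ and $(\sigma_i)$ into timelike families $F, F'$ continuous about (and hence at) $\gamma$, with $F(p_i,q_i)=\gamma_i \neq \sigma_i = F'(p_i,q_i)$ for all large $i$. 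Since $(p_i,q_i)$ eventually enters every neighborhood of $(p,q)$, $F$ and $F'$ cannot agree on any such neighborhood, so they determine different germs at $(p,q)$. This contradicts the uniqueness assumed in (ii).

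The main obstacle is really just bookkeeping across the three equivalences and ensuring the setup of the auxiliary results applies in our smooth strongly causal setting (in particular, finite $\tau$-measurability, local maximality, and local weak causal closedness are automatic there). The only genuinely analytic point is that the family built from the inverse of the universal exponential in (i)$\Rightarrow$(ii) is continuous with respect to $d_\Gamma$ rather than only $d_F$, and this follows at once from the smoothness of $\exp$ and Proposition~\ref{Proposition: Lg vs Ltau}.
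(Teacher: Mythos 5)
Your proof is correct and follows essentially the same route as the paper, which simply chains Proposition~\ref{prop:embeddable_for_spacetimes} (applied to the constant sequence $(\gamma)$ to get a family continuous about $\gamma$), Proposition~\ref{prop:equivalence_of_conjugate_notions} (classical conjugacy $\Leftrightarrow$ symmetric conjugacy), and Proposition~\ref{prop:symm-conj-characterization} (not symmetrically conjugate $\Leftrightarrow$ unique germ). The only differences are presentational: in (i)$\Rightarrow$(ii) you re-derive by hand, via the universal exponential map, what Proposition~\ref{prop:embeddable_for_spacetimes} already delivers for the constant sequence $(\gamma)$; and in (ii)$\Rightarrow$(i) you unpack the contrapositive of Proposition~\ref{prop:symm-conj-characterization} explicitly by building two families $F,F'$ with different germs from the symmetric-conjugacy witnesses, rather than citing the proposition directly.
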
 
\begin{proof}
(i)$\Rightarrow$(ii): Applying Proposition~\ref{prop:embeddable_for_spacetimes} to the constant sequence $(\gamma)$
it follows immediately from the proof of that result that the timelike
family constructed there is continuous about $\gamma$ and is unique among all such families.

(ii)$\Rightarrow$(i): Proposition~\ref{prop:equivalence_of_conjugate_notions} shows that (i) is equivalent to $p$ and $q$ not
being symmetrically conjugate. The claim therefore follows from Proposition~\ref{prop:symm-conj-characterization}.
\end{proof}

\begin{Proposition}[Existence of families around unique maximizers]
\label{Proposition: Existenceoffamiliesifgeodunique}
Let $(X,d,\ll,\leq,\tau)$ be a regular, globally hyperbolic and locally maximizing Lorentzian length space. Let $p \ll q$ and suppose there is a unique maximizer $\gamma \in \TGeo(X)$ from $p$ to $q$. Then there exists a timelike family about $\gamma$ consisting of maximizers which is continuous at $\gamma$, in particular, $\gamma$ is embeddable.
\begin{proof}
    Let $U, V$ be neighborhoods of $p,q$ so that $x \ll y$ for all $x \in U$, $y \in V$. For each $(x,y) \in U \times V$, let $F(x,y)$ be a maximizer from $x$ to $y$, hence $F(p,q)=\gamma$. We claim that $F$ is continuous at $\gamma$. Indeed, if $(p_i,q_i)\to (p,q)$, then by Theorem~\ref{Theorem: Limitcurvetheoremtwoendpointcase} (see also Remark~\ref{Remark: limitcurveconvendp for Geo}) and regularity of $X$, 
    there exists a subsequence of $(F(p_i,q_i))$ that converges (in $d_{\Gamma}$) to a timelike maximizer connecting $p$ to $q$. By uniqueness, this maximizer has to coincide with $\gamma$. Since the same is true for any subsequence of $(F(p_i,q_i))$, we must have that $d_\Gamma(F(p_i,q_i),F(p,q))\to 0$ as $i\to \infty$.  
\end{proof}
\end{Proposition}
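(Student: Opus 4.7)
My plan is to construct $F$ explicitly by picking any maximizer between each pair of points in a suitable product neighborhood, setting $F(p,q) \coloneqq \gamma$, and then using the two-endpoint limit curve theorem together with the uniqueness hypothesis to verify continuity at $(p,q)$.

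First, since the relation $\ll$ is open (by \cite[Prop.~2.13]{kunzinger2018lorentzian}), I can choose open neighborhoods $U$ of $p$ and $V$ of $q$ with $U \ll V$. Since $X$ is a globally hyperbolic Lorentzian length space, for every $(x,y) \in U \times V$ with $x \ll y$ there exists a timelike maximizer from $x$ to $y$ (the Avez--Seifert-type statement in the synthetic setting; see \cite{kunzinger2018lorentzian}). Pick one such maximizer and call it $F(x,y) \in \TGeo(X)$, and for the distinguished pair set $F(p,q) \coloneqq \gamma$. By construction, $F \colon U \times V \to \TGeo(X)$ is a timelike family about $\gamma$ consisting entirely of maximizers.

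Next, I verify continuity of $F$ at $(p,q)$ with respect to $d_\Gamma$. Let $(p_i,q_i) \to (p,q)$ and set $\gamma_i \coloneqq F(p_i,q_i)$. Each $\gamma_i$ is a maximizer, hence in particular a geodesic, so Theorem~\ref{Theorem: Limitcurvetheoremtwoendpointcase} (restated as Remark~\ref{Remark: limitcurveconvendp for Geo} in the language of $\Geo(X)$) applies: since $X$ is globally hyperbolic and locally maximizing, a subsequence of the $\gamma_i$ converges in $\Geo(X)$ to a causal geodesic $\sigma$ from $p$ to $q$, and the limit maximizing clause of the same theorem shows $\sigma$ is itself a maximizer. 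Because $p \ll q$ and $X$ is regular, $\sigma \in \TGeo(X)$. The uniqueness hypothesis on the maximizer from $p$ to $q$ then forces $\sigma = \gamma$.

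The last step is the standard subsequence-of-subsequence upgrade: the argument above applies verbatim to any subsequence of $(\gamma_i)$, producing a further subsequence $d_\Gamma$-convergent to the \emph{same} limit $\gamma$; hence the whole sequence $\gamma_i$ converges to $\gamma$ in $d_\Gamma$. This proves $F$ is continuous at $\gamma$, and applying the definition of embeddability to the constant sequence $(\gamma)$ shows that $\gamma$ is embeddable. The main technical point is ensuring the limit curve inherits the correct causal character and maximality — this is precisely where regularity (to exclude null subsegments) and the limit-maximizing part of Theorem~\ref{Theorem: Limitcurvetheoremtwoendpointcase} enter in an essential way; everything else reduces to the openness of $\ll$ and the uniqueness assumption.
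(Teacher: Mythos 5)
Your proposal is correct and follows essentially the same route as the paper: define $F$ by selecting an arbitrary maximizer for each pair in $U\times V$, invoke the two-endpoint limit curve theorem (via Theorem~\ref{Theorem: Limitcurvetheoremtwoendpointcase} / Remark~\ref{Remark: limitcurveconvendp for Geo}) together with regularity to extract a convergent subsequence to a timelike maximizer from $p$ to $q$, identify the limit as $\gamma$ by uniqueness, and upgrade via the subsequence argument. The only difference is that you spell out a few steps (Avez--Seifert existence, the limit-maximizing clause, the subsequence-of-subsequences upgrade) that the paper leaves implicit.
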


\subsection{Unreachable and ultimate conjugate points}
\label{Subsection: unreachable and ultimate}

We continue to assume that $X$ is a finitely $\tau$-measurable Lorentzian pre-length space.

\begin{Definition}[Unreachable conjugate points]
Let $\gamma \in \TGeo(X)$ with $\gamma(0) = p$ and $\gamma(1) = q$. We say that $p,q$ are~\emph{unreachable conjugate along\/} $\gamma$ if $\gamma$ is not embeddable.
Otherwise, $p$ and $q$ are called~\emph{reachable\/} along $\gamma$.
\end{Definition}

Proposition~\ref{prop:embed-char} immediately yields the following characterization:
\begin{Proposition}\label{prop:unreachable-characterization} 
Let $\gamma\in \TGeo(X)$ connect $p$ and $q$. Then the following are equivalent:
\begin{itemize}
\item[(i)] $p$ and $q$ are unreachable conjugate along $\gamma$; 
\item[(ii)] There exist sequences $p_i\to p$ and $q_i\to q$ such that for no choice $\gamma_i$ of timelike
geodesic from $p_i$ to $q_i$ we have that $\gamma_i \to \gamma$ with respect to $d_\Gamma$.
\end{itemize}
\end{Proposition}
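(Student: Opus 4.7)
The plan is to derive this equivalence as a direct logical reformulation of Proposition~\ref{prop:embed-char}, via the definition of unreachable conjugate points. By that definition, condition (i) here says precisely that $\gamma$ is not embeddable, so the task reduces to checking that (ii) of the present proposition is the logical negation of condition (ii) in Proposition~\ref{prop:embed-char}.

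Spelled out, Proposition~\ref{prop:embed-char}(ii) asserts: for every pair of sequences $p_i \to p$, $q_i \to q$ there exist some $i_0 \in \N$ and a selection $(\gamma_i)_{i \ge i_0}$ of timelike geodesics with $\gamma_i$ running from $p_i$ to $q_i$, such that $\gamma_i \to \gamma$ in $d_\Gamma$. Its negation reads: there exist sequences $p_i \to p$ and $q_i \to q$ for which no $i_0$ and no selection of timelike geodesics $\gamma_i$ from $p_i$ to $q_i$ (for $i\ge i_0$) yields a sequence converging to $\gamma$. To match (ii) exactly as stated here, I would pass to the tail $(p_{i+i_0-1},q_{i+i_0-1})$, which allows us to take $i_0 = 1$; the resulting sequences then witness that no selection $(\gamma_i)_{i \in \N}$ of timelike geodesics from $p_i$ to $q_i$ converges to $\gamma$, which is the content of (ii). For the converse implication, starting from (ii) here, any candidate selection $(\gamma_i)_{i\ge i_0}$ in the sense of Proposition~\ref{prop:embed-char}(ii) is in particular the tail of some full selection $(\gamma_i)_{i\in\N}$ (extended arbitrarily, where geodesics exist, on the initial indices), and since no full selection converges to $\gamma$, no tail can either.

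I do not foresee any real obstacle: the whole argument is a quantifier manipulation on top of Proposition~\ref{prop:embed-char}, and no additional hypothesis on the Lorentzian pre-length space is invoked. The only cosmetic point worth mentioning is the passage from the "$\exists\, i_0$" formulation of Proposition~\ref{prop:embed-char}(ii) to the simpler "for no choice" phrasing of (ii), which is handled by the tail argument just described.
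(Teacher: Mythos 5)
Your proof is correct and takes the same route as the paper, which simply states that the result follows ``immediately'' from Proposition~\ref{prop:embed-char} together with the definition of unreachable conjugate points; your writeup merely spells out the quantifier negation. One small point worth tightening: for the direction in which you extend a tail selection $(\gamma_i)_{i\ge i_0}$ to a full selection, the parenthetical ``where geodesics exist'' glosses over the possibility that no timelike geodesic from $p_i$ to $q_i$ exists for some initial index $i < i_0$; in that (degenerate) case your ``full selection'' is not actually defined on all of $\N$, and the hypothesis ``no full selection converges'' could be vacuously satisfied without blocking the tail. The cleanest fix is to read condition (ii) of the proposition as quantifying over selections $(\gamma_i)$ defined for all sufficiently large $i$ (equivalently, over tails), which is surely what is intended; then the two conditions are exact negations of one another with no extension argument needed.
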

\begin{Lemma}\label{lem:unreachable_implies_standard_in_spacetimes}
Let $(M,g)$ be a strongly causal
spacetime. If $p\ll q$ are unreachable conjugate along a timelike geodesic $\gamma$, then they are conjugate (in the classical sense) along $\gamma$. 
\end{Lemma}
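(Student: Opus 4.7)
The plan is to prove the contrapositive: assuming that $p$ and $q$ are not conjugate in the classical sense along $\gamma$, I will show that $\gamma$ is embeddable, so that $p, q$ are in fact reachable along $\gamma$. This is essentially a one-step deduction from the machinery already assembled in Subsection~\ref{subsection: Families of timelike geodesics}.

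The key input is Theorem~\ref{thm:classical-conj-vs-unique-family}, which asserts that in a strongly causal spacetime, non-conjugacy of $p$ and $q$ along $\gamma$ (in the classical Jacobi-field sense) is equivalent to the existence of a timelike family $F \colon U \times V \to \TGeo(X)$ that is continuous about $\gamma$ and unique among timelike families continuous at $\gamma$. So under the contrapositive hypothesis we obtain such an $F$, which, being continuous about $\gamma$, is in particular continuous at $(p,q)$ and satisfies $F(p,q) = \gamma$. By the characterization of embeddability for a single curve (the constant sequence $(\gamma)$ is embeddable if and only if there exists a timelike family about $\gamma$ that is continuous at $\gamma$), this exactly says that $\gamma$ is embeddable, hence $p, q$ are reachable along $\gamma$, contradicting the assumed unreachability.

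Alternatively, one could appeal directly to Proposition~\ref{prop:embeddable_for_spacetimes}, applied to the constant sequence $(\gamma)$: it produces a timelike family continuous about $\gamma$ with $F(p,q) = \gamma$, showing that every timelike geodesic in a strongly causal spacetime is strongly embeddable. From this viewpoint the set of unreachable conjugate pairs in a smooth strongly causal spacetime is in fact empty, and the lemma holds vacuously. The formulation via Theorem~\ref{thm:classical-conj-vs-unique-family} is preferable because it makes the logical relation to the classical notion transparent.

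There is no real obstacle here; the content of the lemma is that the synthetic notion of unreachable conjugacy is a (possibly strict) weakening of the classical notion in the smooth setting, and this is immediate once the equivalences of Proposition~\ref{prop:symm-conj-characterization} and Theorem~\ref{thm:classical-conj-vs-unique-family} are in place. The only thing to be careful about is not to conflate continuity at $\gamma$ with continuity about $\gamma$ when quoting Theorem~\ref{thm:classical-conj-vs-unique-family}, but since continuous-about implies continuous-at, the direction needed here is the trivial one.
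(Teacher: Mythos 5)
Your main argument—the contrapositive via Theorem~\ref{thm:classical-conj-vs-unique-family}, which under non-conjugacy supplies a timelike family continuous about $\gamma$, hence continuous at $\gamma$, which is precisely embeddability of $\gamma$—is exactly the paper's proof and is correct.

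Your alternative route overclaims, and the overclaim should have been a red flag. You argue that Proposition~\ref{prop:embeddable_for_spacetimes}, applied to the constant sequence $(\gamma)$, makes every timelike geodesic in a strongly causal spacetime strongly embeddable, so that unreachable conjugate pairs are empty and the lemma vacuous. That conclusion is false: the anti-de Sitter example immediately following the lemma exhibits unreachable conjugate pairs, and the same construction works in the universal cover, which \emph{is} strongly causal. A cleaner globally hyperbolic instance is $\R\times S^n$ ($n\ge 2$) with metric $-dt^2+g_{S^n}$: taking antipodal $N,S\in S^n$ and $a>\pi$ close to $\pi$, the points $p=(0,N)$ and $q=(a,S)$ are joined by an $(n{-}1)$-parameter family of distinct timelike geodesics with identical endpoints and identical $\tau$-length $\sqrt{a^2-\pi^2}$. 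Any non-constant $d_\Gamma$-convergent sequence drawn from this family is not embeddable, since no single value $F(p,q)$ can equal infinitely many distinct $\gamma_i$; this alone shows Proposition~\ref{prop:embeddable_for_spacetimes} cannot hold in the generality stated, and moving $q_i\to q$ along one member of the family while $p_i\equiv p$ shows, via Proposition~\ref{prop:unreachable-characterization}, that $p,q$ are genuinely unreachable conjugate along any other member. The gap in the proof of Proposition~\ref{prop:embeddable_for_spacetimes} is at the Tietze extension step: nothing forces the extended $H$ to satisfy $\exp_x(H(x,y))=y$ off the closed set $C$, which is what is needed for $F(x,y)(t)=\exp_x(tH(x,y))$ to be a curve from $x$ to $y$, i.e.\ for $F$ to qualify as a timelike family at all. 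Stick to your first route; the lemma is not vacuous.
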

\begin{proof}
If $p$ and $q$ were not conjugate in the classical sense, then
by Theorem~\ref{thm:classical-conj-vs-unique-family} there would exist a
timelike family about $\gamma$ that is continuous at $\gamma$, contradicting
Proposition~\ref{prop:unreachable-characterization} (iii).
\end{proof}

\begin{Example}
    Consider $n$-dimensional anti-de Sitter space $H^{1,n}(K)$ of conjugate radius $\frac{\pi}{\sqrt{-K}}$, $K < 0$. Then any two timelike related points $p \ll q$ with $\tau(p,q) = \frac{\pi}{\sqrt{-K}}$ are unreachable conjugate along any timelike maximizer between them. Indeed, any future-directed timelike geodesic $\gamma$ containing $p$ and $q$ lies in the intersection of $H^{1,n}(K)$ with some plane $P$ connecting $p$, $q$ and $0$. Now let $Q\ne P$ be another plane through $0$, $p$ and $q$ such that $Q$ cuts out from $H^{1,n}(K)$ a future-directed timelike geodesic $\sigma$ from $p$ to $q$.  Then for points $p_i, q_i \in \sigma$ with $p\ll p_i$, $q_i\ll q$, $p_i\to p$ and $q_i\to q$, $\sigma$ is the only future-directed timelike geodesic connecting them.
\end{Example}

\begin{Corollary}\label{cor:unique-max-reachable}
    Let $(X,d,\ll,\leq,\tau)$ be a regular, globally hyperbolic and locally maximizing Lorentzian length space. Let $p \ll q$ and suppose there exists a unique maximizer $\gamma \in \TGeo(X)$ from $p$ to $q$. Then $p,q$ are reachable.
\begin{proof}
    This follows from Proposition~\ref{Proposition: Existenceoffamiliesifgeodunique}.
\end{proof}
\end{Corollary}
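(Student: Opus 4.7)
The plan is essentially a one-line deduction from the immediately preceding Proposition~\ref{Proposition: Existenceoffamiliesifgeodunique}. By definition, $p$ and $q$ are \emph{reachable} along $\gamma$ precisely when they are not unreachable conjugate, and being unreachable conjugate along $\gamma$ means exactly that $\gamma$ is not embeddable. So the entire claim reduces to showing that the maximizer $\gamma$ is embeddable.

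First, I would invoke Proposition~\ref{Proposition: Existenceoffamiliesifgeodunique} directly: since $(X,d,\ll,\leq,\tau)$ is regular, globally hyperbolic and locally maximizing, and since $\gamma$ is the unique maximizer from $p$ to $q$, that proposition produces a timelike family $F \colon U\times V \to \TGeo(X)$ consisting of maximizers with $F(p,q)=\gamma$, which is continuous at $\gamma$ with respect to $d_\Gamma$. In particular, $\gamma$ is embeddable in the sense of Definition~\ref{def:unique_familiy}'s associated notion of embeddability.

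Finally, applying the characterization of unreachability (Proposition~\ref{prop:unreachable-characterization}): since $\gamma$ is embeddable, the defining condition for $p$ and $q$ to be unreachable conjugate along $\gamma$ fails, which by definition means that $p$ and $q$ are reachable along $\gamma$. No further work is needed.

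The only potential subtlety (and hence the ``main obstacle'', though it is already handled inside Proposition~\ref{Proposition: Existenceoffamiliesifgeodunique} itself) is that one must remember embeddability requires only continuity \emph{at} $\gamma$, not about $\gamma$; the proposition delivers exactly this, using the limit curve theorem with converging endpoints (Theorem~\ref{Theorem: Limitcurvetheoremtwoendpointcase} and Remark~\ref{Remark: limitcurveconvendp for Geo}) together with regularity to guarantee that every subsequential limit of the chosen maximizers coincides with the unique $\gamma$. So in practice the proof is a single sentence: apply Proposition~\ref{Proposition: Existenceoffamiliesifgeodunique} to obtain embeddability of $\gamma$, and conclude reachability from Proposition~\ref{prop:unreachable-characterization}.
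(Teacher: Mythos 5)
Your proposal is correct and coincides with the paper's proof, which simply cites Proposition~\ref{Proposition: Existenceoffamiliesifgeodunique}. You unpack the same one-line deduction—that proposition yields a timelike family continuous at $\gamma$, hence $\gamma$ is embeddable, hence $p,q$ are reachable—so no new content is added, nor is any needed.
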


\begin{Definition}[Ultimate conjugate points]
Let $(X,d,\ll,\leq,\tau)$ be a finitely $\tau$-measurable Lorentzian pre-length space and $\gamma \in \TGeo(X)$. Then $\gamma(0),\gamma(1)$ are called~\emph{ultimate conjugate points\/} if they are unreachable conjugate points or symmetric conjugate points (along $\gamma$).
\end{Definition}

\begin{Example} In~\cite[Ex.\ 4.7]{shankar2009conjugate}, an example (due to V.\ Schroeder) of a geodesic $\sigma \colon [0,1] \to N$ in a
complete Riemannian manifold $(N,h)$ (specifically, a two-dimensional ellipsoid) is given, with the following properties: $\sigma$ is the unique geodesic in $N$ connecting $x=\sigma(0)$ and $y=\sigma(1)$, while $x$ and $y$ are conjugate along $\sigma$. We may ``Lorentzify'' this example as follows: Let $M \coloneqq \R\times N$ with Lorentzian metric $g=-dt^2 + h_x$. Then $(M,g)$ is globally hyperbolic (each slice $\{t=t_0\}$ is a Cauchy surface), and the following correspondences hold (cf., e.g., \cite[Ch.\ 7]{ON83}): Any geodesic in $(M,g)$ is of the form $s\mapsto (as+b,\beta)$, where $a,b\in \R$ and $\beta$ is a geodesic in $(N,h)$. Jacobi fields in $(M,g)$ take the form $s\mapsto (T(s),J(s))$, where $T$ is affine linear and $J$ is a Jacobi field in $(N,h)$. Thus if we set $\gamma(s) \coloneqq (as,\sigma(s))$ with $\sigma$ as above, then for $a>0$ sufficiently large, $\gamma$ is a timelike geodesic connecting $p=(0,x)$ to $q=(a,y)$. It is the unique timelike geodesic in $(M,g)$ connecting $p$ and $q$, and these points are conjugate along $\gamma$. Since $\sigma$ is minimizing from $x$ to $y$ in $N$, $\gamma$ is a timelike maximizer from $p$ to $q$ in $(M,g)$. By Corollary~\ref{cor:unique-max-reachable}, $p$ and $q$ are reachable along $\gamma$. Consequently, they provide an example of points along a timelike geodesic that are (by Proposition~\ref{prop:equivalence_of_conjugate_notions}) symmetrically (hence ultimate) conjugate, but not unreachable conjugate.
\end{Example}

\begin{Corollary}
Let $(M,g)$ be a smooth, strongly causal spacetime and $\gamma \colon [0,1] \to M$ a timelike geodesic. Then $\gamma(0),\gamma(1)$ are conjugate in the sense of Jacobi fields if and only if they are ultimate conjugate.
\begin{proof} This is immediate from Proposition~\ref{prop:equivalence_of_conjugate_notions} and Lemma 
\ref{lem:unreachable_implies_standard_in_spacetimes}.
\end{proof}
\end{Corollary}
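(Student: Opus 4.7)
The plan is to unwind the definition of \emph{ultimate conjugate} and then invoke the two cited results in each of the two cases.

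First, I would argue the ``only if'' direction. Assume that $\gamma(0)$ and $\gamma(1)$ are conjugate along $\gamma$ in the Jacobi field (classical) sense. By the equivalence (iii)$\Leftrightarrow$(i) in Proposition~\ref{prop:equivalence_of_conjugate_notions}, this implies that $\gamma(1)$ is one-sided conjugate to $\gamma(0)$ along $\gamma$, and as noted right after the definition of symmetric conjugate points, any one-sided conjugate pair is in particular symmetrically conjugate. By the definition of ultimate conjugate points, being symmetrically conjugate suffices, so $\gamma(0)$ and $\gamma(1)$ are ultimate conjugate.

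For the ``if'' direction, I would split into the two alternatives allowed by the definition of ultimate conjugate. Suppose $\gamma(0)$ and $\gamma(1)$ are ultimate conjugate along $\gamma$. If they are symmetrically conjugate along $\gamma$, then Proposition~\ref{prop:equivalence_of_conjugate_notions} (specifically (ii)$\Leftrightarrow$(iii)) yields that they are conjugate in the classical sense. If instead they are unreachable conjugate along $\gamma$, then Lemma~\ref{lem:unreachable_implies_standard_in_spacetimes} gives that they are classically conjugate along $\gamma$. In either case, the conclusion holds.

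Since both directions follow by direct appeal to the previously established results, there is no real obstacle; the only thing to be careful about is the logical structure of the definition of ultimate conjugate, which is a disjunction, so that the ``if'' direction must handle the unreachable case and the symmetric case separately (using Lemma~\ref{lem:unreachable_implies_standard_in_spacetimes} and Proposition~\ref{prop:equivalence_of_conjugate_notions} respectively), while the ``only if'' direction only uses Proposition~\ref{prop:equivalence_of_conjugate_notions} together with the elementary observation that one-sided conjugate implies symmetric conjugate.
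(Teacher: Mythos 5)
Your proof is correct and is precisely the argument that the paper's one-line proof is summarizing: the ``only if'' direction uses Proposition~\ref{prop:equivalence_of_conjugate_notions} together with the observation that one-sided conjugacy implies symmetric conjugacy, and the ``if'' direction splits the disjunction in the definition of ultimate conjugate, handling the symmetric case via Proposition~\ref{prop:equivalence_of_conjugate_notions} and the unreachable case via Lemma~\ref{lem:unreachable_implies_standard_in_spacetimes}. No differences in approach.
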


\begin{Proposition}
    Let $(X,d,\ll,\leq,\tau)$ be a regular, locally maximizing, globally hyperbolic Lorentzian length space. Let $\gamma \in \TGeo(X)$ such that $p \coloneqq \gamma(0)$ and $q \coloneqq \gamma(1)$ are not ultimate conjugate along $\gamma$. Then there exists a continuous timelike family around $\gamma$ whose germ is unique among all timelike families continuous at $\gamma$.
    \begin{proof}
        By assumption, $p,q$ are reachable and not symmetric conjugate. By definition of reachability, there exists a timelike family $F \colon U \times V \to \TGeo(X)$ continuous at $\gamma$. By Proposition~\ref{prop:symm-conj-characterization}, $F$ is a representative of the unique germ of timelike families continuous at $\gamma$.
    \end{proof}
\end{Proposition}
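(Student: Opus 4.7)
The plan is to unpack the hypothesis that $p,q$ are not ultimate conjugate along $\gamma$ into its two constituent pieces and then chain together existing results. By the definition of ultimate conjugacy, its negation supplies both that $\gamma$ is \emph{embeddable} (i.e., $p,q$ are reachable along $\gamma$) and that $p,q$ are not \emph{symmetric conjugate} along $\gamma$.

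First I would invoke embeddability. Directly from the definition, this produces a timelike family $F \colon U \times V \to \TGeo(X)$ about $\gamma$ that is continuous at the point $(p,q)$. Second, I would appeal to Proposition~\ref{prop:symm-conj-characterization}: since $p,q$ are not symmetric conjugate along $\gamma$, the germ of $F$ at $(p,q)$ is the unique germ of timelike families continuous at $\gamma$. Together, these two steps deliver both existence of $F$ and germ-uniqueness in the conclusion, so the bulk of the statement reduces to bookkeeping of definitions.

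The one subtlety I would flag is whether the statement demands continuity on the full neighborhood $U \times V$ (\emph{continuous about $\gamma$} in the paper's terminology) or merely continuity at the single point $(p,q)$ (\emph{continuous at $\gamma$}). The argument above yields only the weaker conclusion. If the stronger form is intended, I would attempt to upgrade using the unnamed Lemma preceding Proposition~\ref{prop:embeddable_for_spacetimes}, which converts ``continuous at $\gamma$'' to ``continuous about $\gamma$'' provided (a) every $d_\Gamma$-convergent sequence in $\TGeo(X)$ is embeddable, and (b) germ-uniqueness holds at every nearby geodesic of bounded $\tau$-length. Under the present hypotheses of regularity, global hyperbolicity, and local maximizability, condition (a) should follow from Theorem~\ref{Theorem: Limitcurvetheoremtwoendpointcase} together with Proposition~\ref{Proposition: TopologyofGeoX}, while (b) requires propagating the ``not ultimate conjugate'' property from $\gamma$ itself to all nearby timelike geodesics on a small enough $U \times V$. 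I expect this localization of the non-ultimate-conjugacy hypothesis, rather than the two main ingredients, to be the genuinely delicate point; an analogue of Proposition~\ref{Proposition: Existenceoffamiliesifgeodunique} applied pointwise in $U \times V$, combined with the openness of $\ll$, should suffice but would need to be carried out carefully.
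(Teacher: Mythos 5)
Your core argument is identical to the paper's: decompose ``not ultimate conjugate'' into reachable plus not symmetric conjugate, extract a family $F$ continuous at $\gamma$ from reachability, and invoke Proposition~\ref{prop:symm-conj-characterization} for germ-uniqueness. The paper's own proof consists of exactly these three sentences.

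The subtlety you flag is real, and worth being clear about: the statement's phrase ``a continuous timelike family around $\gamma$'' reads most naturally as ``continuous about $\gamma$'' (i.e., on the full domain $U\times V$), but the paper's proof, like yours, only delivers ``continuous at $\gamma$'' (continuity at the single point $(p,q)$). So this is an imprecision in the statement rather than a gap you introduced — both proofs establish the same thing. Your suggested upgrade route via the unnamed Lemma preceding Proposition~\ref{prop:embeddable_for_spacetimes} is the natural one, and you correctly diagnose that its hypothesis (b) — propagating germ-uniqueness, hence non-symmetric-conjugacy, to all nearby geodesics below a length bound — is the genuinely delicate point and is not carried out anywhere in the paper. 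Condition (a), embeddability of every $d_\Gamma$-convergent sequence, also does not follow immediately from Theorem~\ref{Theorem: Limitcurvetheoremtwoendpointcase} alone: that theorem yields a convergent subsequence of connecting geodesics but not automatically the existence of a suitable family about the limit; Proposition~\ref{Proposition: Existenceoffamiliesifgeodunique} gives embeddability under a uniqueness hypothesis that one would again have to localize. In short, your proof is as complete as the paper's; the extra discussion correctly identifies what would be needed for the stronger reading, and that extra work is not done by the paper either.
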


\subsection{Causal cut loci}
\label{Subsection: causalcutloci}

\begin{Definition}[Causal cut points]
Let $(X,d,\ll,\leq,\tau)$ be a finitely $\tau$-measurable Lorentzian pre-length space and $p,q \in X$. We say $q$ is a~\emph{future causal cut point\/} of $p$ if there exist at least two distinct maximizing elements in $\Geo(X)$ connecting $p$ to $q$. Moreover, if $X$ is regular, we call a future causal cut point~\emph{timelike\/} if $p \ll q$ and~\emph{null\/} if $p < q$.
\end{Definition}

Recall that geodesics in regular Lorentzian pre-length spaces are either timelike or null by definition. Hence, the definition of future causal cut points implies that in such spaces, timelike cut points are joined by two distinct timelike maximizers, and similarly null cut points are joined by two distinct null maximizers.

\begin{Definition}[Initial cut loci]
\label{Definition: firstcutlocus}
Let $(X,d,\ll,\leq,\tau)$ be a finitely $\tau$-measurable Lorentzian pre-length space and $p \in X$. Given $\gamma \in \Geo(X)$, we say $q \coloneqq \gamma(1)$ is an~\emph{initial future causal cut point\/} of $p=\gamma(0)$ along $\gamma$ if
\begin{enumerate}
    \item there exist future causal cut points $q_i$ of $p$ converging to $q$;
    \item there is no point $q' \in \gamma([0,1]) \setminus \{q\}$ with property (i).
\end{enumerate}
In the case where there is always a $q'$ as in (ii), we take $p$ to be the initial future causal cut point of itself along $\gamma$. We denote the set of initial future causal cut points of $p$ along causal geodesics by $\mathrm{IniCut}_C^+(p) \subseteq X$, called the~\emph{initial future causal cut locus\/} of $p$.
If $X$ is regular, we define the~\emph{initial future timelike cut locus} $\mathrm{IniCut}_T^+(p)$ and the~\emph{initial future null cut locus} $\mathrm{IniCut}_N^+(p)$ by replacing all instances of causal with timelike resp.\ null. It is then clear that $\mathrm{IniCut}_C^+(p)$ is the disjoint union of $\mathrm{IniCut}_T^+(p)$ and $\mathrm{IniCut}_N^+(p)$.
\end{Definition}

\begin{Remark}[Cut points in smooth spacetimes]
Let $(M,g)$ be a smooth, globally hyperbolic spacetime. Recall that if $\gamma \colon [0,1] \to M$ is a future causal geodesic, then $\gamma(1)$ is in the future causal cut locus (in the standard sense) of $p$ if and only if there exists another maximizer from $p$ to $q$ ($\gamma$ is also maximizing up to $q$) or $q$ is the first conjugate point to $p$ along $\gamma$ (the former case corresponds $q_i = q$ in Definition~\ref{Definition: firstcutlocus}, see e.g.\ \cite[Prop.\ 3.2.30]{Treude:2011}). 
Hence, $\mathrm{IniCut}_C^+(p)$ agrees with the usual notion of future causal cut locus in this setting. Similarly, $\mathrm{IniCut}_T^+(p)$ and $\mathrm{IniCut}_N^+(p)$ agree with the usual notions of future timelike and future null cut locus, respectively.
\end{Remark}

\begin{Remark}[Comparison with~\cite{shankar2009conjugate}]
In~\cite[Def.\ 7.2]{shankar2009conjugate}, the authors require the $q_i$ from Definition~\ref{Definition: firstcutlocus} (in their metric setting) to lie on $\gamma$. 
This notion, however, does not coincide with the standard notion on smooth Riemannian manifolds.
Indeed, the two-dimensional torus of revolution provides a counterexample: The cut locus of a point $p$ slightly above the outer equator contains an arc lying on a parallel whose endpoints are conjugate but cannot be reached by more than one minimizer from $p$, see \cite{GMST:05}, (iii) on p.\ 104 and Fig.\ 4. Consider one of these (two) points, call it $q$ and suppose it is reached from $p$ by a geodesic $\gamma$. Since $\gamma$ beyond $q$ cannot coincide with the arc it does not contain any meeting points of minimizers from $p$ (although $q$ is a limit of such points lying on the arc).
\end{Remark}

\begin{Definition}[Initial injectivity radii]
Let $(X,d,\ll,\leq,\tau)$ be a finitely $\tau$-measurable Lorentzian pre-length space. For $p \in X$, we define the~\emph{initial future causal injectivity radius} of $X$ at $p$ to be $\mathrm{IniInj}_C^+(p) \coloneqq d(p,\mathrm{IniCut}_C^+(p))$. The~\emph{initial future causal injectivity radius} of $X$ is then $\mathrm{IniInj}_C^+(X) \coloneqq \inf_{p \in X} \mathrm{IniInj}_C^+(p)$. If $X$ is in addition regular, we define in complete analogy the~\emph{initial future timelike/null injectivity radii} $\mathrm{IniInj}_T^+(p),\mathrm{IniInj}_T^+(X), \mathrm{IniInj}_N^+(p)$ and $\mathrm{IniInj}_N^+(X)$.
\end{Definition}

\begin{Definition}[Unique injectivity radii]
Let $(X,d,\ll,\leq,\tau)$ be a finitely $\tau$-measurable and geodesic Lorentzian pre-length space. Then the~\emph{future causal unique injectivity radius} $\mathrm{UniqueInj}_C^+(X)$ is the supremum over all $r \geq 0$ so that all causally related points of $d$-distance less than $r$ can be connected by unique maximizers. If $X$ is in addition regular, we define and denote the corresponding timelike and null notions in an analogous way.
\end{Definition}

Our main result of this subsection is the following compatibility property.

\begin{Proposition}
Let $(X,d,\ll,\leq,\tau)$ be a finitely $\tau$-measurable and geodesic Lorentzian pre-length space. Then $\mathrm{IniInj}_C^+(X) = \mathrm{UniqueInj}_C^+(X)$ (timelike, null as well).
\end{Proposition}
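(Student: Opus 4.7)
The plan is to establish the causal equality $\mathrm{IniInj}_C^+(X) = \mathrm{UniqueInj}_C^+(X)$ by proving both inequalities, and then to obtain the timelike and null versions by the identical arguments after restricting throughout to timelike (respectively null) geodesics and cut points.

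For $\mathrm{IniInj}_C^+(X) \geq \mathrm{UniqueInj}_C^+(X)$, I would fix $p \in X$ and any $q \in \mathrm{IniCut}_C^+(p)$. Unpacking Definition~\ref{Definition: firstcutlocus}, whichever clause places $q$ in the initial cut locus (the non-degenerate one via condition (i), or the degenerate one with $q = p$) supplies a sequence of future causal cut points $q_n$ of $p$ with $q_n \to q$. Each $q_n$ admits two distinct maximizers from $p$, so by the very definition of $\mathrm{UniqueInj}_C^+(X)$ one has $d(p, q_n) \geq \mathrm{UniqueInj}_C^+(X)$. Passing to the limit yields $d(p, q) \geq \mathrm{UniqueInj}_C^+(X)$, and the inequality follows upon taking infima first in $q$ and then in $p$. (In the degenerate case this argument forces both radii to vanish simultaneously.)

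For the reverse inequality, fix $\varepsilon > 0$ and, using the supremum in the definition of $\mathrm{UniqueInj}_C^+(X)$, choose $x \leq y$ with $d(x, y) < \mathrm{UniqueInj}_C^+(X) + \varepsilon$ admitting two distinct maximizers; then $y \in \mathrm{Cut}(x) \subseteq \overline{\mathrm{Cut}(x)}$. Fix one such maximizer $\alpha \colon [0, 1] \to X$ and set
\[
s_* \coloneqq \min\{s \in [0, 1] : \alpha(s) \in \overline{\mathrm{Cut}(x)}\},
\]
which exists because the displayed set is closed (preimage of a closed set under the continuous map $\alpha$) and contains $1$. If $s_* = 0$, then $x \in \overline{\mathrm{Cut}(x)}$, the degenerate clause places $x \in \mathrm{IniCut}_C^+(x)$, and $\mathrm{IniInj}_C^+(X) = 0$. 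Otherwise, reparametrizing $\alpha|_{[0, s_*]}$ to $[0, 1]$ produces a geodesic $\gamma$ along which condition (i) holds at $\gamma(1) = \alpha(s_*) \in \overline{\mathrm{Cut}(x)}$ and condition (ii) follows from the minimality of $s_*$; hence $\alpha(s_*) \in \mathrm{IniCut}_C^+(x)$.

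The main technical hurdle is then the comparison $d(x, \alpha(s_*)) \leq d(x, y)$, since in this generality the auxiliary metric $d$ need not be monotone along a $\tau$-maximizer. To secure it I would exploit that $\alpha|_{[0, s_*]}$ is itself a $\tau$-maximizer (sub-segments of maximizers are maximizers, via the reverse triangle inequality for $\tau$) and combine this with the fact that any sequence of cut points approaching $\alpha(s_*)$ lies at $d$-distance at least $\mathrm{UniqueInj}_C^+(X)$ from $x$, bounding $d(x, \alpha(s_*))$ from above by $d(x, y)$. Once the comparison is in place, $\mathrm{IniInj}_C^+(X) \leq d(x, \alpha(s_*)) \leq d(x, y) < \mathrm{UniqueInj}_C^+(X) + \varepsilon$, and letting $\varepsilon \to 0$ completes the proof.
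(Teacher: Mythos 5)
Your first inequality (that $\mathrm{IniInj}_C^+(X) \geq \mathrm{UniqueInj}_C^+(X)$) is essentially the paper's argument and is fine: every initial cut point is a limit of cut points, each of which is at $d$-distance at least $\mathrm{UniqueInj}_C^+(X)$ from $p$, and one passes to the limit.

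For the reverse inequality, you take a genuinely different and ultimately incomplete route. You truncate a maximizer $\alpha$ from $x$ to $y$ at $s_* = \min\{s : \alpha(s)\in \overline{\mathrm{Cut}(x)}\}$ in order to manufacture an initial cut point $\alpha(s_*)$, and you then correctly identify that the needed estimate $d(x,\alpha(s_*)) \le d(x,y)$ is not automatic, since $d$ has no reason to be monotone along a $\tau$-maximizer. The ``fix'' you sketch does not close this gap: the observation that cut points accumulating at $\alpha(s_*)$ lie at $d$-distance at least $\mathrm{UniqueInj}_C^+(X)$ from $x$ gives only a \emph{lower} bound $d(x,\alpha(s_*)) \ge \mathrm{UniqueInj}_C^+(X)$, not the upper bound $d(x,\alpha(s_*)) \le d(x,y)$ the argument requires, and the fact that $\alpha|_{[0,s_*]}$ is a maximizer says nothing about $d$. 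So the ``main technical hurdle'' remains open in your write-up.

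The paper avoids this hurdle entirely by not truncating. Reading Definition~\ref{Definition: firstcutlocus} as stated, with condition (ii) being that $q = \gamma(1)$ is the \emph{only} point on $\gamma([0,1])$ satisfying (i), one simply applies the definition to the full maximizer $\gamma$ from $p$ to $q$: since $q$ itself is a cut point, it satisfies (i); either it is the unique such point on $\gamma$, so $q \in \mathrm{IniCut}_C^+(p)$ and $\mathrm{IniInj}_C^+(p) \le d(p,q)$, or the degenerate clause applies and $p \in \mathrm{IniCut}_C^+(p)$, giving $\mathrm{IniInj}_C^+(p) = 0 \le d(p,q)$. Either way one obtains $\mathrm{IniInj}_C^+(X) \le \mathrm{IniInj}_C^+(p) \le d(p,q) < \mathrm{UniqueInj}_C^+(X) + \varepsilon$ with no need to compare $d(x,\alpha(s_*))$ to $d(x,y)$. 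Your $s_*$-minimization implicitly re-reads (ii) as ``no \emph{earlier} point on $\gamma$ satisfies (i),'' which is a natural interpretation but is not the one that makes the distance bound come out for free; under the definition as written, the truncation step is both unnecessary and the source of the gap.
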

\begin{proof}
Let $r \geq 0$ be such that all causally related points of distance at most $r$ can be connected by a unique maximizer, let $p \in X$ and $q \in
\mathrm{IniCut}_C^+(p)$.
Then by definition, either there exist two maximizers from $p$ to $q$ (in which case $d(p,q) \geq r$), or, arbitrarily close to $q$ there exist cut points of $p$. In the latter case, if $d(p,q) < r$, let $\varepsilon > 0$ and $\Tilde{q}$ a cut point of $p$ so that $d(p,\Tilde{q}) \leq d(p,q) + \varepsilon < r$, which is a contradiction. So in any case, $d(p,q) \geq r$, and taking the infimum over $q$ and then $p$, as well as the supremum over $r \geq 0$, we get
\begin{align}\label{eq:first-ultimate}
    \mathrm{IniInj}_C^+(X)
    \geq \mathrm{UniqueInj}_C^+(X).
\end{align}
Conversely, let $R \coloneqq \mathrm{UniqueInj}_C^+(X)$ and let $\varepsilon > 0$ be arbitrary. By definition, there exists $p \in X$ and $q \in J^+(p) \cap B_{R+\varepsilon}(p)$ so that there exist two distinct maximizers from $p$ to $q$. Therefore,
\[ 
\mathrm{IniInj}_C^+(X)
\le 
\mathrm{IniInj}_C^+(p)
\le d(p,q) < R+\eps.
\]
Letting $\eps\to 0$ shows that the converse of \eqref{eq:first-ultimate}.
\end{proof}

\section{Applications}
\label{Section: applications}

In this section, we give two applications that use of our notions of conjugate points, namely a timelike Rauch comparison theorem (in analogy with Shankar--Sormani~\cite[Thm.\ 10.1]{shankar2009conjugate}), as well as a consequence of the Lorentzian Cartan--Hadamard Theorem established by Er\"{o}s--Gieger~\cite{Eroes-Gieger25} (for the metric Cartan--Hadamard Theorem, cf.\ Bridson--Haefliger~\cite[Thm.\ 4.1]{bridson2013metric}).

\subsection{A timelike Rauch comparison theorem}
\label{subsection: Timelike Rauch}

\begin{Theorem}[Timelike Rauch comparison theorem]
\label{Theorem: Timelike Rauch}
Let $(X,d,\ll,\leq,\tau)$ be a chronological, regular and locally timelike maximizing Lorentzian pre-length space with timelike curvature $\leq \kappa$ and let $\gamma \in \TGeo(X)$ be such that $p \coloneqq \gamma(0)$ and $q \coloneqq \gamma(1)$ are symmetric conjugate along $\gamma$. Then $L_{\tau}(\gamma) \geq D_{\kappa}$.
\begin{proof}

    Suppose that there exists a timelike geodesic $\gamma \in \TGeo(X)$ with $L_{\tau}(\gamma) < D_{\kappa}$ whose endpoints are symmetric conjugate. Pick a representative, i.e., fix a parametrization $\gamma \colon [0,1] \to X$.

Cover $\gamma([0,1])$ by a finite collection $\mathcal{U}$ of locally timelike maximizing comparison neighborhoods, and let $L$ denote the Lebesgue number of the open cover $\{U\cap \gamma([0,1]) \colon U\in \mathcal{U}\}$ of $\gamma([0,1])$. Choose $\delta>0$ such that $d(\gamma(s),\gamma(t))<L$ for any $s,t\in [0,1]$ with $|t-s|<\delta$. Then picking points $0=t_0<\dots < t_N=1$ in such a way that $|t_{i+1}-t_i|<\delta/3$ for each $i$, we can guarantee that each segment $\gamma([t_i,t_{i+3}])$ lies entirely in one element of $\mathcal{U}$ ($i=0,\dots,N-3$). 

\begin{figure}[ht!] % [h!] is an optional placement specifier (here: "here, if possible!")
    \centering
    \includegraphics[width=0.8\linewidth,
                 height=0.5\textheight,
                 keepaspectratio]{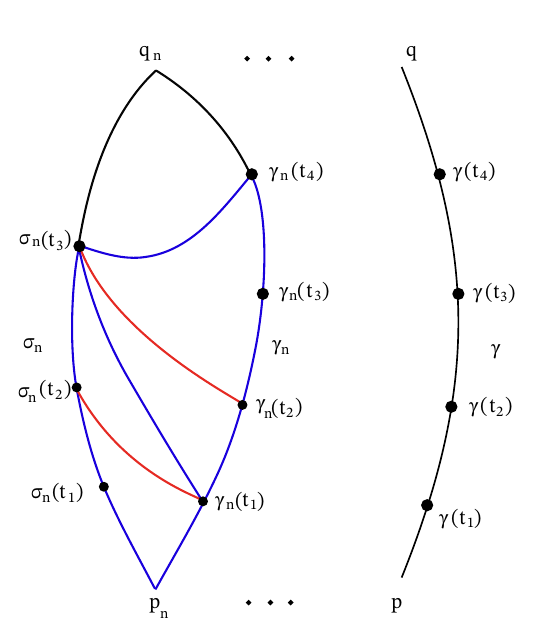}
    \caption{Geometric setup in the proof of Theorem \ref{Theorem: Timelike Rauch}}
    \label{fig:rauch-illustration} 
\end{figure}

Since the endpoints $p,q$ of $\gamma$ are symmetric conjugate, we may find sequences $(\gamma_n),(\sigma_n) \subseteq \TGeo(X)$ with $\gamma_n \neq \sigma_n$ for all $n$ but with the same endpoints $p_n,q_n$ and $\gamma_n,\sigma_n \to \gamma$ in the $d_{\Gamma}$-topology. It follows that there exist representatives (denoted in the same way) converging to $\gamma$ uniformly and in $\tau$-length. For $n$ large, we therefore also have that, for each $i$, the segments $\gamma_n([t_i,t_{i+3}])$ and $\sigma_n([t_i,t_{i+3}])$ lie in the same element of $\mathcal{U}$ as $\gamma([t_i,t_{i+3}])$.

In the remainder of the proof we additionally fix $n$ sufficiently large to secure that $\max(L_\tau(\gamma_n),L_\tau(\sigma_n))<D_\kappa$ and that $p_n\ll \gamma_n(t_1) \ll \sigma_n(t_2) \ll \gamma_n(t_3) \ll \dots \ll q_n$, as well as 
$p_n\ll \sigma_n(t_1) \ll \gamma_n(t_2) \ll \sigma_n(t_3) \ll \dots \ll q_n$ (cf.\ Figure \ref{fig:rauch-illustration}).

    We now consider the timelike triangle $\Delta(p_n,\gamma_n(t_1),\sigma_n(t_3))$ (with the obvious sides; for the side from $\gamma_n(t_1)$ to $\sigma_n(t_3)$ we choose a timelike maximizer entirely in the corresponding comparison neighborhood). This triangle can be subdivided into the smaller timelike triangles $\Delta(p_n,\gamma_n(t_1),\sigma_n(t_2))$ and $\Delta(\gamma_n(t_1),\sigma_n(t_2),\sigma_n(t_3))$.

    This puts us into the situation of~\cite[Prop.\ 2.42]{beran2023splitting}, from which we conclude that the stacking of the corresponding comparison triangles of the sub-triangles in $\lm{\kappa}$ is concave. Analogously, we subdivide the next timelike triangle 
    $\Delta(\gamma_n(t_1),\sigma_n(t_3),\gamma_n(t_4))$ into the timelike sub-triangles $\Delta(\gamma_n(t_1),\gamma_n(t_2),\sigma_n(t_3))$ and $\Delta(\gamma_n(t_2),\sigma_n(t_3),\gamma_n(t_4))$ and iterate this procedure until we reach the endpoint $q_n$.
    In this way, we obtain piecewise timelike geodesics $\bar{\gamma}_n,\bar{\sigma}_n$ in $\lm{\kappa}$ with common endpoints whose Lorentzian arclength is the same as that of $\gamma_n,\sigma_n$, respectively. In particular, it is $< D_{\kappa}$. 

    We may assume that $\bar{\gamma}_n,\bar{\sigma}_n$ are parametrized to unit speed.
    Consider now a breakpoint $p$ of $\bar{\gamma}_n$, and for the moment call $\bar{\alpha}_n$ and $\bar{\beta}_n$ the geodesic sub-segments of $\bar{\gamma}_n$ meeting at $p$. Then replace $\bar{\beta}_n$ by its image under an isometry of 
    $\lm{\kappa}$ whose derivative maps the initial velocity vector of $\bar{\beta}_n$ to the terminal velocity vector of 
    $\bar{\alpha}_n$, thereby straightening the concatenation of $\bar{\alpha}_n$ and $\bar{\beta}_n$ into an unbroken geodesic
    segment. We repeat this procedure at each breakpoint of $\bar{\gamma}_n$, and analogously for $\bar{\sigma}_n$.
    Concavity of the stacking implies that these unbroken geodesics will remain within the region bounded by the original stacking and therefore also have to meet. Moreover, since we applied an isometry in each step, the resulting unbroken geodesic segments retain the Lorentzian length of $\bar{\gamma}_n$ and $\bar{\sigma}_n$, respectively. Consequently, we have found two different geodesic segments in $\lm{\kappa}$ that meet at $\bar{\tau}$-length strictly less than $D_{\kappa}$, a contradiction.
\end{proof}
\end{Theorem}

\subsection{Nonexistence of conjugate points under nonpositive timelike curvature}
\label{subsection: Conjugate points in spaces with timelike curvature bounds}

\begin{Theorem}
\label{Theorem: cartan hadamard adjacent}
Let $(X,d,\ll,\leq,\tau)$ be a Lorentzian pre-length space that is globally hyperbolic, strongly causal, locally causally closed, timelike path-connected,
and regular. Suppose that $X$ has curvature bounded above by $0$ in the sense of timelike triangle comparison and let $\gamma \in \TGeo(X)$. Then there exists a timelike family about $\gamma$ that is continuous at $\gamma$. This family is unique among timelike families about $\gamma$ that are continuous at $\gamma$. In particular, the endpoints of $\gamma$ are reachable (and are thus not ultimate conjugate).
\end{Theorem}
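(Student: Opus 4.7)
The plan is to derive all three claims from the Lorentzian Cartan--Hadamard theorem of Erős--Gieger together with the timelike Rauch comparison theorem (Theorem~\ref{Theorem: Timelike Rauch}) proved above.

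First I would invoke the Erős--Gieger Lorentzian Cartan--Hadamard theorem, whose hypotheses are met by our assumptions (globally hyperbolic, regular, strongly causal, timelike path-connected, timelike curvature $\leq 0$), to conclude that any two timelike related points in $X$ are joined by a \emph{unique} timelike maximizer. In particular, $\gamma$ itself must be this unique maximizer from $p \coloneqq \gamma(0)$ to $q \coloneqq \gamma(1)$. Proposition~\ref{Proposition: Existenceoffamiliesifgeodunique} (existence of families around unique maximizers) then produces a timelike family $F \colon U \times V \to \TGeo(X)$ consisting of maximizers which is continuous at $\gamma$. By definition, this means $\gamma$ is embeddable, so its endpoints are reachable.

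For the uniqueness of this family, I would appeal to Theorem~\ref{Theorem: Timelike Rauch} with curvature bound $\kappa = 0$. Since $D_0 = \infty$ while $L_\tau(\gamma)$ is finite (global hyperbolicity implies $X$ is a Lorentzian length space, hence localizable and finitely $\tau$-measurable), the contrapositive of Rauch forces $p$ and $q$ \emph{not} to be symmetric conjugate along $\gamma$. Combined with the already established existence of a timelike family continuous at $\gamma$, Proposition~\ref{prop:symm-conj-characterization} yields that $F$ represents the unique germ of timelike families continuous at $\gamma$. This is the desired uniqueness statement. The final assertion is then immediate: reachability (from the first step) together with the absence of symmetric conjugacy (from the Rauch step) is, by definition, precisely the negation of ultimate conjugacy.

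The main obstacle I would expect is purely bookkeeping of hypotheses: verifying that the statement of the Erős--Gieger theorem can be read off our assumption list in the exact form of \emph{uniqueness of the maximizer between arbitrary timelike related points}, and checking that the ``locally timelike maximizing'' hypothesis required by Theorem~\ref{Theorem: Timelike Rauch} is automatic here --- presumably via condition~(ii) in the definition of timelike curvature bound (which provides maximizers in comparison neighborhoods for all pairs with $\tau < D_0 = \infty$) combined with global hyperbolicity and regularity. Once these technical compatibilities are confirmed, the proof reduces to the chain of implications above with no further computation.
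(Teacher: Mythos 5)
The proposal has a genuine gap at the first step. You read the Erős--Gieger Cartan--Hadamard theorem as asserting that any two timelike related points in $X$ are joined by a unique timelike maximizer globally. This is not what that theorem says, and it is in fact false under the stated hypotheses: the flat Lorentzian cylinder $(\mathbb{R}\times S^1, -dt^2+d\theta^2)$ is globally hyperbolic, strongly causal, locally causally closed, timelike path-connected, regular, and has timelike curvature $\le 0$, yet the points $(0,0)$ and $(T,\pi)$ with $T>\pi$ are joined by two distinct timelike maximizers (winding clockwise or counterclockwise by half a turn). Cartan--Hadamard-type conclusions about global uniqueness require simple connectivity, which is not assumed here. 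What Erős--Gieger actually provides (and what the paper extracts via their Remark~2.2 and Theorem~3.1) is a \emph{local tubular} statement: an open neighborhood $U$ of $\gamma([0,1])$ and endpoint neighborhoods $U_0\ll U_1$ such that for each $(p',q')\in U_0\times U_1$ there is a \emph{unique} timelike geodesic from $p'$ to $q'$ \emph{with image in $U$}, and this geodesic varies continuously. This local uniqueness immediately gives both existence of a continuous family (once one upgrades $d_F$-convergence to $d_\Gamma$-convergence by covering with comparison neighborhoods) and the uniqueness of its germ (any other family continuous at $\gamma$ must, by continuity, eventually land in $U$, whereupon the local uniqueness forces agreement). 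No appeal to the Rauch theorem is needed.

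A second, related problem: your argument tacitly assumes $\gamma$ is a maximizer (``$\gamma$ itself must be this unique maximizer''), but the theorem is stated for an arbitrary $\gamma\in\TGeo(X)$; timelike geodesics need not be maximizing between their endpoints. Proposition~\ref{Proposition: Existenceoffamiliesifgeodunique} only applies when $\gamma$ is the unique maximizer from $p$ to $q$, so even if the global uniqueness held it would not cover non-maximizing $\gamma$. The paper's local-tubular argument sidesteps this entirely, since it applies to any timelike geodesic, not just maximizers. The Rauch step you propose for the uniqueness of the germ is itself sound in spirit (via Proposition~\ref{prop:symm-conj-characterization}), but it is rendered unnecessary by the direct argument, and it would also require verifying the ``locally timelike maximizing'' hypothesis, which you correctly flag but do not resolve.
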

\begin{proof} By~\cite[Rem.\ 2.2]{Eroes-Gieger25}, $X$ is locally concave. Therefore, \cite[Thm.\ 3.1]{Eroes-Gieger25} shows, in particular, that there exists an open neighborhood $U$ of $\gamma([0,1])$ and open neighborhoods $U_0$ of $\gamma(0)$ and $U_1$ of $\gamma(1)$ with $U_0 \ll U_1$ such that the following properties hold:
\begin{itemize}
\item[(i)] For any $(p,q)\in U_0\times U_1$ there exists a unique timelike geodesic $\gamma_{pq} \colon [0,1]\to U$ with $\gamma_{pq}(0)=p$ and $\gamma_{pq}(1)=q$.
\item[(ii)] If $p_k\in U_0$ with $p_k\to \gamma(0)$ and $q_k \in U_1$ with $q_k \to q$, then $\gamma_{p_kq_k}$ 
converges uniformly to $\gamma$. In particular, also $d_F(\gamma_{p_kq_k},\gamma) \to 0$.
\end{itemize}
Cover $\gamma([0,1])$ by comparison neighborhoods $V_1,\dots,V_N$ such that for a suitable partition $0=t_0<\dots<t_N=1$, 
$\gamma([t_i,t_{i+1}])\subseteq V_{i+1}$ for all $i=0,\dots,N-1$. By (ii), there exists some $k_0$ such that also $\gamma_{p_kq_k}([t_i,t_{i+1}])\subseteq V_{i+1}$ for all $k\ge k_0$ and all $i$. The proof of~\cite[Prop.\ 2.3]{Eroes-Gieger25} shows that 
then each $\gamma_{p_kq_k}|_{[t_i,t_{i+1}]}$ as well as $\gamma|_{[t_i,t_{i+1}]}$ are timelike maximizers. Since $\tau|_{V_i\times V_i}$ is
continuous, we obtain
\begin{align*}
L_\tau(\gamma_{p_kq_k}|_{[t_i,t_{i+1}]}) = \tau(\gamma_{p_kq_k}(t_i),\gamma_{p_kq_k}(t_{i+1})) \to \tau(\gamma(t_i),\gamma(t_{i+1}))
= L_\tau (\gamma|_{[t_i,t_{i+1}]}).
\end{align*}
Consequently, 
\[
L_\tau(\gamma_{p_kq_k}) = \sum_{i=0}^{N-1} L_\tau(\gamma_{p_kq_k}|_{[t_i,t_{i+1}]}))
\to \sum_{i=0}^{N-1} L_\tau(\gamma|_{[t_i,t_{i+1}]})) = L_\tau(\gamma),
\]
implying that in fact $\gamma_{p_kq_k} \to \gamma$ with respect to $d_\Gamma$. 

It follows that $F \colon U_0\times U_1 \to \TGeo(X)$, $(p,q) \mapsto \gamma_{pq}$ is a timelike family about $\gamma$ that is
continuous at $\gamma$. If $\tilde F \colon W_0\times W_1 \to \TGeo(X)$ is another timelike family about $\gamma$ that is continuous at $\gamma$, then $d_\Gamma$-continuity of $\tilde F$ at $\gamma$ implies that there exist open neighborhoods $\tilde W_i
\subseteq W_i\cap U_i$ ($i=0,1$) such that $\tilde F(\tilde W_0\times \tilde W_1) \subseteq U$. Then (i) implies that 
$F=\tilde F$ on $\tilde W_0\times \tilde W_1$.

\end{proof}

\section{Outlook}
\label{section: Outlook}

In this work, we have introduced and studied timelike conjugate points in Lorentzian (pre-)length spaces. We have compared various notions with one another and with their Jacobi field counterpart in the smooth setting. Moreover, we have given applications (Theorems~\ref{Theorem: Timelike Rauch} and~\ref{Theorem: cartan hadamard adjacent}) illustrating their use.

There are several avenues for future work on conjugate points in the synthetic Lorentzian setting. First, a study of conjugate points along null geodesics would be of great interest. Here, care should be taken not to disrupt the compatibility with the smooth case. 

Moreover, a link between conjugate points and singularity theorems should be established. In this context, there are two tasks: Obtaining conjugate points from curvature bounds, and then using conjugate points to obtain singularities. A version of the latter was investigated by Minguzzi in the context of closed cone structures~\cite{minguzzi2019causality}.

\begin{appendices}

\section{The Fr\'echet distance between curves}
\label{Section: appendix frechet}

In this Appendix, we introduce the Fr\'echet distance between curves and prove some basic properties that were used in the main part of the paper. This notion indeed goes back to Fr\'echet and has found many applications (cf., e.g., \cite{Alt-Godau}), but we are not aware of a reference that rigorously establishes these fundamentals.

Let $(X,d)$ be a metric space. By a path in $X$ we mean an element $\gamma$ of the space of continuous maps $C([0,1],X)$ from $[0,1]$ into $X$ (we use the interval $[0,1]$ for convenience). Two paths $\gamma, \tilde{\gamma}$ are called equivalent if there exists a parametrization $\varphi \colon [0,1] \to [0,1]$ which is strictly increasing and surjective (hence continuous) such that $\gamma=\tilde{\gamma}\circ \varphi$. A curve is an equivalence class of paths under this equivalence relation, and we will
write $[\gamma]$ for the curve represented by the path $\gamma$. A path is called~\emph{nowhere constant\/} if it is not constant on any non-trivial sub-interval of $[0,1]$. This notion clearly descends to the space of curves as well. Given two curves $[\gamma],[\sigma]$, we define the~\emph{Fr\'echet distance\/} between them as follows:
\begin{equation}\label{eq:Frechet-distance}
    d_F([\gamma],[\sigma]) \coloneqq \inf_{\varphi,\psi} \max_{t \in [0,1]} d(\gamma(\varphi(t)),\sigma(\psi(t))),
\end{equation}
where $\varphi, \psi$ run over the set of all parametrizations as defined above. Obviously, we can equivalently write
\begin{equation*}
    d_F([\gamma],[\sigma]) \coloneqq \inf_{\varphi} \max_{t\in [0,1]} d(\gamma(\varphi(t)),\sigma(t)). 
\end{equation*}

We begin by establishing some properties of (spaces of) pairs of non-decreasing continuous maps:

\begin{Lemma}\label{Lem:parametrization} Let $\alpha \colon [0, 1] \to [0, 1]^2$ be a continuous, coordinate-wise non-decreasing map given by $\alpha(t) = (\varphi(t), \psi(t))$ with $\alpha(0) = (0, 0)$ and $\alpha(1) = (1, 1)$. Then there exists a coordinate-wise non-decreasing and $1$-Lipschitz continuous curve $\tilde{\alpha} = (\tilde{\varphi},\tilde{\psi}) \colon [0,2] \to [0,1]^2$ that has the same image as $\alpha$.
\end{Lemma}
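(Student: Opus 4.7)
The plan is to reparametrize $\alpha$ by its ``$\ell^{1}$-arclength''. I set $L(t) \coloneqq \varphi(t) + \psi(t)$; this is a continuous, non-decreasing surjection $L \colon [0,1] \to [0,2]$, thanks to $\varphi(0) = \psi(0) = 0$ and $\varphi(1) = \psi(1) = 1$. The reparametrized map will then be defined by $\tilde{\alpha}(s) \coloneqq \alpha(t)$ for any $t \in L^{-1}(s)$, and what remains is to verify that this prescription is well-defined, continuous, coordinate-wise non-decreasing, $1$-Lipschitz, and has the same image as $\alpha$.

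Well-definedness is the first and, in fact, the only genuinely nontrivial step: if $L(t_{1}) = L(t_{2})$ with $t_{1} < t_{2}$, then $\varphi + \psi$ is constant on $[t_{1},t_{2}]$, and since both summands are non-decreasing, each of $\varphi$ and $\psi$ must individually be constant on this interval. Hence $\alpha$ is constant on $[t_{1},t_{2}]$, so $\alpha(t_{1}) = \alpha(t_{2})$. Continuity of $\tilde{\alpha}$ then follows by a standard subsequential argument: if $s_{n} \to s$ and $t_{n} \in L^{-1}(s_{n})$, any subsequence of $(t_{n})$ admits, by compactness of $[0,1]$, a further subsequence converging to some $t^{\ast}$ with $L(t^{\ast}) = s$; by well-definedness $\alpha(t^{\ast}) = \tilde{\alpha}(s)$, so $\tilde{\alpha}(s_{n}) = \alpha(t_{n}) \to \tilde{\alpha}(s)$.

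For monotonicity and the Lipschitz estimate, given $s_{1} \le s_{2}$ in $[0,2]$ one chooses preimages $t_{1} \le t_{2}$ with $L(t_{i}) = s_{i}$. Then $\tilde{\varphi}(s_{2}) - \tilde{\varphi}(s_{1}) = \varphi(t_{2}) - \varphi(t_{1}) \ge 0$, and analogously for $\tilde{\psi}$, which settles coordinate-wise non-decreasingness. Furthermore,
\[
\bigl(\tilde{\varphi}(s_{2}) - \tilde{\varphi}(s_{1})\bigr) + \bigl(\tilde{\psi}(s_{2}) - \tilde{\psi}(s_{1})\bigr) = L(t_{2}) - L(t_{1}) = s_{2} - s_{1},
\]
so each of these two non-negative differences is bounded by $s_{2} - s_{1}$; in particular $\tilde{\alpha}$ is $1$-Lipschitz in the supremum norm on $[0,1]^{2}$. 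Equality of images $\tilde{\alpha}([0,2]) = \alpha([0,1])$ is immediate from the construction, since by definition every value of $\tilde{\alpha}$ is of the form $\alpha(t)$, while conversely $\alpha(t) = \tilde{\alpha}(L(t))$. The main (if mild) obstacle is thus the elementary observation on non-decreasing functions with constant sum that underlies the well-definedness step; everything else is formal.
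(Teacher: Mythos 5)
Your proposal is correct and follows essentially the same route as the paper: reparametrize $\alpha$ by the ``$\ell^1$-arclength'' $s\mapsto\varphi(s)+\psi(s)$, and exploit the elementary fact that two non-decreasing functions with constant sum on an interval must each be constant there. The paper packages this via the explicit generalized inverse $\theta(u)=\inf\{t:\varphi(t)+\psi(t)\ge u\}$ and then verifies image equality, whereas you define $\tilde\alpha$ by picking an arbitrary preimage and checking well-definedness directly — these are interchangeable. One small remark: your displayed $\ell^1$ identity $\|\tilde\alpha(s_2)-\tilde\alpha(s_1)\|_1=s_2-s_1$ already yields the $1$-Lipschitz bound in the Euclidean norm (which is what the paper records), so stating only the sup-norm conclusion undersells your own computation.
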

\begin{proof} The function $s \colon [0,1]\to [0,2]$, $s(t) \coloneqq \varphi(t)+\psi(t)$ is continuous, non-decreasing, and surjective by the
intermediate value theorem. Let $\theta \colon [0,2] \to [0,1]$ be the generalized inverse of $s$,
\[
\theta(u) \coloneqq \inf \{ t\in [0,1] \mid s(t) \ge u \}.
\]
Since $s$ is surjective, $\theta$ is well-defined, and using continuity of $s$ one readily checks that $s(\theta(u)) = u$ for each $u\in [0,2]$. Now set, for $u\in [0,2]$:
\begin{align*}
    \tilde \alpha(u) \coloneqq \alpha(\theta(u)) = (\varphi(\theta(u)),\psi(\theta(u))) \eqqcolon (\tilde\varphi(u),\tilde\psi(u)).
\end{align*}
We first show that $\tilde\alpha([0,2]) = \alpha([0,1])$: Obviously, $\tilde\alpha([0,2]) \subseteq \alpha([0,1])$. Conversely, let
$p \coloneqq \alpha(t_0)$, $t_0\in [0,1]$ and $u_0 \coloneqq s(t_0)$. Setting $t_* \coloneqq \theta(s(t_0))$, we then have $\tilde\alpha(u_0) = \alpha(t_*)$. The definition of $\theta$, together with continuity and monotonicity of $s$ imply that $s(t_0)=s(t_*)$,
which we can also write as
\[
(\varphi(t_0)-\varphi(t_*)) + (\psi(t_0)-\psi(t_*)) = 0.
\]
Here, since $t_*\le t_0$ and since both $\varphi$ and $\psi$ are non-decreasing, both summands have to vanish, implying that $\alpha(t_0) = \alpha(t_*) =\tilde \alpha(u_0)$, establishing the claim.

Finally, to show that $\tilde \alpha$ is $1$-Lipschitz, let $0\le u<v\le 2$. Then using monotonicity of $\theta$, $\varphi$ and $\psi$,
hence of $\tilde \varphi$ and $\tilde\psi$,
\begin{align*}
\| \tilde\alpha(v) - \tilde \alpha(u)\|_1 &= |\tilde\varphi(v) - \tilde\varphi(u)| + |\tilde\psi(v)-\tilde\psi(u)|
= (\tilde\varphi(v) + \tilde\psi(v)) - (\tilde\varphi(u) + \tilde\psi(u))\\ 
&= s(\theta(v)) - s(\theta(u)) = v-u.
\end{align*}
Consequently, for any $u,v\in [0,2]$ we have
\[
\| \tilde\alpha(v) - \tilde \alpha(u)\|_2 \le \| \tilde\alpha(v) - \tilde \alpha(u)\|_1 \le |v-u|.
\]
\end{proof}
\begin{Lemma}\label{Lem:parametrizations-Hausdorff-compact}
Let $N$ be the set of continuous, coordinate-wise non-decreasing maps $\alpha=(\varphi,\psi) \colon [0,1]\to [0,1]^2$ with $\alpha(0) = (0,0)$
and $\alpha(1)=(1,1)$. Then $\mathcal{N} \coloneqq \{\alpha([0,1]) \mid \alpha\in N\}$ is compact with respect to the Hausdorff distance $d_H$.
\end{Lemma}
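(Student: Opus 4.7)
The strategy is to reduce to a standard Arzelà--Ascoli argument by first showing that every element of $\mathcal{N}$ admits a uniformly Lipschitz parametrization. Given $\alpha \in N$, Lemma~\ref{Lem:parametrization} furnishes a coordinate-wise non-decreasing, $1$-Lipschitz map $\tilde\alpha \colon [0,2]\to [0,1]^2$ with the same image as $\alpha$ and satisfying $\tilde\alpha(0) = (0,0)$, $\tilde\alpha(2) = (1,1)$. Rescaling the parameter by a factor of two produces $\hat\alpha \colon [0,1]\to [0,1]^2$, $\hat\alpha(t) \coloneqq \tilde\alpha(2t)$, which is coordinate-wise non-decreasing, $2$-Lipschitz, sends $0$ to $(0,0)$ and $1$ to $(1,1)$, and has the same image as $\alpha$. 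Writing $\hat N$ for the set of all such rescaled parametrizations, we have $\mathcal{N} = \{\hat\alpha([0,1]) \mid \hat\alpha \in \hat N\}$.

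Next, I would apply Arzelà--Ascoli to $\hat N \subseteq C([0,1], [0,1]^2)$ equipped with the uniform metric. The family $\hat N$ is uniformly bounded (its values lie in $[0,1]^2$) and equicontinuous (all its elements share the Lipschitz constant $2$), hence relatively compact in the uniform topology. Moreover, each defining property of $\hat N$ -- coordinate-wise monotonicity, the $2$-Lipschitz bound, and the fixed endpoint conditions at $0$ and $1$ -- is preserved under uniform limits, so $\hat N$ is closed and therefore uniformly compact.

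Finally, I would verify that the image map $\Psi \colon \hat N \to \mathcal{N}$, $\hat\alpha \mapsto \hat\alpha([0,1])$, is continuous when $\mathcal{N}$ is regarded as a subset of the space of non-empty compact subsets of $[0,1]^2$ endowed with the Hausdorff distance $d_H$. If $\hat\alpha_n \to \hat\alpha$ uniformly, then for any $\varepsilon > 0$ and all sufficiently large $n$ one has $\|\hat\alpha_n(t) - \hat\alpha(t)\| < \varepsilon$ for every $t \in [0,1]$. Consequently, every point of $\hat\alpha_n([0,1])$ lies within distance $\varepsilon$ of $\hat\alpha([0,1])$, and conversely, which yields $d_H(\hat\alpha_n([0,1]), \hat\alpha([0,1])) < \varepsilon$. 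Since $\mathcal{N} = \Psi(\hat N)$ is the continuous image of a compact set, it is compact.

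I do not anticipate any serious obstacle. The one mildly delicate point is the initial reduction step: the previous lemma produces parametrizations on $[0,2]$, so we must rescale to bring the domain back to $[0,1]$, at the cost of merely doubling the Lipschitz constant. Once a uniform Lipschitz bound on a common parameter interval is in place, Arzelà--Ascoli together with the elementary fact that uniform convergence of maps forces Hausdorff convergence of their images delivers the conclusion.
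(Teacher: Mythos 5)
Your proof is correct and follows essentially the same route as the paper: extract a uniformly Lipschitz reparametrization for each element of $\mathcal{N}$ via Lemma~\ref{Lem:parametrization}, invoke Arzel\`a--Ascoli, and push compactness forward along the ($1$-Lipschitz, hence continuous) image map into $(\mathcal{K}([0,1]^2), d_H)$. The only difference is cosmetic: you rescale the $[0,2]$-parametrizations to $[0,1]$ at the cost of a Lipschitz constant $2$, while the paper stays on $[0,2]$ with constant $1$.

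One small imprecision worth flagging: you define $\hat N$ as ``the set of all such rescaled parametrizations'' arising from Lemma~\ref{Lem:parametrization}, but then argue closedness of $\hat N$ by observing that monotonicity, the Lipschitz bound, and the endpoint conditions are preserved under uniform limits. That argument shows the set of \emph{all} coordinate-wise non-decreasing, $2$-Lipschitz maps $[0,1]\to[0,1]^2$ with the prescribed endpoints is closed, which is in general a strictly larger set than your $\hat N$ (the maps coming from the lemma also satisfy $\tilde\varphi(v)+\tilde\psi(v)-\tilde\varphi(u)-\tilde\psi(u)=v-u$). The fix is trivial -- simply take $\hat N$ to be that larger closed set from the outset, as the paper does with its set $L$: it still lies inside $N$ (so its images lie in $\mathcal{N}$) and still contains all the rescaled parametrizations (so it reaches every element of $\mathcal{N}$), and the rest of your argument goes through verbatim.
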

\begin{proof} Denote by $L$ the set of all coordinate-wise non-decreasing $1$-Lipschitz maps $\eta \colon [0,2]\to [0,1]^2$ with $\eta(0)=(0,0)$ and $\eta(1)=(1,1)$. By the Theorem of Arzel\`a--Ascoli, $L$ is relatively compact in $C([0,2],[0,1]^2)$. 
Since it is clearly also closed with respect to uniform convergence, it is indeed compact in $C([0,2],[0,1]^2)$.

Denote by $\mathcal{K}([0,1]^2)$ the space of compact subsets of $[0,1]^2$, and equip it with the Hausdorff distance $d_H$. Consider the map 
\begin{align*}
    \iota \colon C([0,2],[0,1]^2) &\to \mathcal{K}([0,1]^2)\\
    \eta &\mapsto \eta([0,2]).
\end{align*}
Since $d_H(\iota(\eta), \iota(\zeta)) \le \sup_{u\in [0,2]} \|\eta(u) - \zeta(u)\|_2 $, it follows that $\iota$ is continuous (indeed, $1$-Lipschitz). Thus $\mathcal{L} = \iota(L)$ is compact with respect to $d_H$. Finally, by Lemma~\ref{Lem:parametrization}, $\mathcal{N} = \mathcal{L}$, concluding the proof.
\end{proof}

Based on these preparations, we can now show:
\begin{Proposition}\label{Prop:Frechet-distance} The Fr\'echet distance $d_F$ defines a metric on the space of nowhere constant curves in $(X,d)$.  
\end{Proposition}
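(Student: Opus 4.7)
The axioms of non-negativity and symmetry are immediate from \eqref{eq:Frechet-distance}, and well-definedness on equivalence classes follows because the set of parametrizations of $[0,1]$ is closed under composition and inversion, so replacing a representative $\gamma$ by $\gamma \circ \chi$ simply reindexes the infimum. For the triangle inequality I would work with the single-parametrization form $d_F([\gamma],[\sigma]) = \inf_{\varphi}\max_t d(\gamma(\varphi(t)), \sigma(t))$ and exploit that every parametrization is a homeomorphism of $[0,1]$. Given $\varepsilon > 0$, pick $\varphi_1$ near-optimal for $d_F([\gamma],[\rho])$ and $\varphi_2$ near-optimal for $d_F([\rho],[\sigma])$; substituting $t \mapsto \varphi_2(s)$ in the first estimate and applying the triangle inequality of $d$ pointwise shows that $\varphi_1 \circ \varphi_2$ realizes the comparison between $\gamma$ and $\sigma$ with uniform error at most $d_F([\gamma],[\rho]) + d_F([\rho],[\sigma]) + 2\varepsilon$, finishing the step as $\varepsilon \to 0$.

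The crux of the proposition is the implication $d_F([\gamma],[\sigma]) = 0 \Rightarrow [\gamma]=[\sigma]$. I would choose minimizing pairs $(\varphi_n,\psi_n)$ and view the combined maps $\alpha_n \coloneqq (\varphi_n,\psi_n) \colon [0,1] \to [0,1]^2$. A direct application of Arzel\`a-Ascoli is impossible since strictly increasing surjections of $[0,1]$ need not be equicontinuous; to bypass this, I invoke Lemma \ref{Lem:parametrization} to replace each $\alpha_n$ by a $1$-Lipschitz, coordinate-wise non-decreasing $\tilde\alpha_n = (\tilde\varphi_n,\tilde\psi_n) \colon [0,2] \to [0,1]^2$ with the same image. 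Since $\max_t d(\gamma(\varphi_n(t)),\sigma(\psi_n(t)))$ depends only on $\mathrm{im}(\alpha_n)$, it equals the analogous supremum for $\tilde\alpha_n$. Arzel\`a-Ascoli then yields a uniform subsequential limit $\tilde\alpha = (\tilde\varphi,\tilde\psi)$ with $\tilde\varphi, \tilde\psi$ continuous non-decreasing surjections from $[0,2]$ onto $[0,1]$, and uniform continuity of $\gamma$ and $\sigma$ elevates the approximate identity to $\gamma \circ \tilde\varphi = \sigma \circ \tilde\psi$ on $[0,2]$.

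The main obstacle is promoting the pair $(\tilde\varphi,\tilde\psi)$ of non-strict monotone surjections to a single genuine reparametrization. I would set $\varphi(s) \coloneqq \tilde\psi(u)$ for any $u \in \tilde\varphi^{-1}(s)$. The nowhere-constancy hypothesis enters decisively in two ways. First, for well-definedness: if $\tilde\varphi$ is constant on $[u_1,u_2]$, then $\sigma \circ \tilde\psi = \gamma \circ \tilde\varphi$ is constant there, so by nowhere-constancy of $\sigma$, $\tilde\psi$ must be constant on $[u_1,u_2]$ as well. Second, for strict monotonicity: if $s_1 < s_2$ admitted preimages $u_1 \le u_2$ with $\tilde\psi(u_1) = \tilde\psi(u_2)$, then $\tilde\psi$ would be constant on $[u_1,u_2]$ while $\tilde\varphi$ sweeps $[s_1,s_2]$, making $\gamma$ constant on $[s_1,s_2]$ and violating nowhere-constancy of $\gamma$. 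Continuity of $\varphi$ follows from compactness of fibers of $\tilde\varphi$ combined with the uniqueness of $\tilde\psi$ on each fiber, and the boundary values $\varphi(0)=0$, $\varphi(1)=1$ are forced by the same nowhere-constancy argument applied at the endpoints. Hence $\varphi$ is a parametrization with $\gamma = \sigma \circ \varphi$, so $[\gamma] = [\sigma]$.
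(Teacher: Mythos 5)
Your proof is correct and follows the same overall strategy as the paper's: use Lemma~\ref{Lem:parametrization} to obtain compactness in the family of monotone reparametrization pairs, pass to a limit $(\tilde\varphi,\tilde\psi)$ satisfying $\gamma\circ\tilde\varphi=\sigma\circ\tilde\psi$, and then exploit nowhere-constancy to upgrade to a genuine strictly increasing parametrization. There are two small but genuine variations. First, you apply Lemma~\ref{Lem:parametrization} to each $\alpha_n$ individually, observe that $\max_t d(\gamma(\varphi_n(t)),\sigma(\psi_n(t)))$ depends only on $\mathrm{im}(\alpha_n)$, and invoke Arzel\`a--Ascoli directly on the $1$-Lipschitz curves $\tilde\alpha_n$; this bypasses Lemma~\ref{Lem:parametrizations-Hausdorff-compact} entirely and is arguably more direct than the paper's route via $d_H$-compactness of the set of images. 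Second, in the final step you factor $\tilde\psi$ through the fibers of $\tilde\varphi$ to produce a single parametrization $\varphi$ with $\gamma=\sigma\circ\varphi$, whereas the paper introduces the auxiliary averaging map $\tau=(\varphi+\psi)/2$ and factors \emph{both} components through $\tau$ to obtain a symmetric pair $(\hat\varphi,\hat\psi)$ with $\gamma\circ\hat\varphi=\sigma\circ\hat\psi$; your version is slightly more economical since it yields the reparametrization directly rather than as $\hat\psi\circ\hat\varphi^{-1}$. One minor point: your continuity justification (``compactness of fibers combined with uniqueness of $\tilde\psi$ on each fiber'') is a terse paraphrase of a sequential-compactness or quotient-map argument; either should be spelled out, and the quotient-map route (as the paper uses for $\tau$) is the cleanest, since $\tilde\varphi$ is a continuous surjection from a compact space to a Hausdorff space and is thus a quotient map.
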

\begin{proof} Since parametrizations form a group, the Fr\'echet distance is well defined. Symmetry of $d_F$ is clear and the triangle inequality follows readily from the definition. Also $d_F([\gamma],[\gamma])=0$ is obvious. 

The only non-trivial point therefore is to show that $d_F([\gamma],[\sigma])=0$ implies $[\gamma]=[\sigma]$, i.e., that $\gamma$ is a reparametrization of $\sigma$. From the definition of $d_F$ we obtain sequences of parametrizations $(\varphi_n,\psi_n)$ such that, for each $n\in\N$:
\begin{equation}\label{eq:phin-psin}
    \max_{t\in [0,1]} d(\gamma(\varphi_n(t)),\sigma(\psi_n(t))) < \frac{1}{n}.
\end{equation}

Set $\alpha_n \coloneqq (\varphi_n,\psi_n)$, and $A_n \coloneqq \alpha_n([0,1])$. Then by Lemma~\ref{Lem:parametrizations-Hausdorff-compact}, $(A_n)_{n\in \N}$
is a sequence in the $d_H$-compact set $\mathcal{N}$, hence possesses a $d_H$-convergent subsequence, which we may without loss of generality assume to be $(A_n)_{n\in \N}$ itself. Let $A\in \mathcal{N}$ be its limit. Then there is some $\alpha = (\varphi,\psi)\in N$ with $A=\alpha([0,1])$. Given any $t\in [0,1]$, there exists a sequence $(t_n)_{n\in \N}$ in $[0,1]$ such that 
\[
A_n\ni (\varphi_n(t_n),\psi_n(t_n)) \to (\varphi(t),\psi(t)) \qquad (n\to \infty).
\]
By \eqref{eq:phin-psin} we have $d(\gamma(\varphi_n(t_n)),\sigma(\psi_n(t_n))) < \frac{1}{n}$, so letting $n\to \infty$ we obtain
$d(\gamma(\varphi(t)),\sigma(\psi(t)))=0$, implying that $\gamma\circ \varphi = \sigma\circ \psi$. 

We are left with the task of constructing from $\varphi$ and $\psi$ new continuous surjective maps $\hat\varphi$, $\hat\psi \colon [0,1]\to [0,1]$ that are strictly monotonically increasing, hence qualify as parametrizations and satisfy $\gamma\circ \hat\varphi = \sigma\circ \hat\psi$. This we do
in several steps.

We first claim that $\varphi$ is constant on any non-trivial interval $[t_1,t_2]$ of $[0,1]$ if and only if $\psi$ is constant on $[t_1,t_2]$.
By symmetry, it suffices to show the `only if' part of the claim. Since $\varphi$ is constant on $[t_1,t_2]$, so is
$\gamma\circ \varphi = \sigma\circ \psi$.
Also, $\psi$ being continuous and non-decreasing, the image of $[t_1, t_2]$ under $\psi$ is the closed interval $J = [\psi(t_1), \psi(t_2)]$.
The path $\sigma$ is therefore constant on $J$. By our assumption on $\sigma$ this forces $J$ to be a single point, hence
$\psi(t_1) = \psi(t_2)$. Since $\psi$ is non-decreasing, $\psi$ is indeed constant on $[t_1, t_2]$.

Define $\tau \colon [0,1] \to [0,1]$ by
$$
\tau(t) = \frac{\varphi(t) + \psi(t)}{2}.
$$
Then $\tau$ is continuous, non-decreasing and surjective $[0,1]\to [0,1]$. 

Next we show that $\tau(t)$ is constant on an interval $[t_1, t_2]$ if and only if both $\vp(t)$ and $\ps(t)$ are constant on that interval. For the non-trivial direction of this claim, suppose that $\tau$ is constant on $[t_1, t_2]$. Then for any $t, t'$ with $t_1 \le t < t' \le t_2$, we have $\tau(t) = \tau(t')$, hence
\[
\frac{\vp(t) + \ps(t)}{2} = \frac{\vp(t') + \ps(t')}{2} \implies (\vp(t') - \vp(t)) + (\ps(t') - \ps(t)) = 0
\]
Since $\vp$ and $\ps$ are non-decreasing, we conclude that $\vp(t) = \vp(t')$ and $\ps(t) = \ps(t')$. Thus both $\vp$ and $\ps$ are constant on $[t_1, t_2]$.

For any $s \in [0,1]$, define $\hvp(s) \coloneqq \vp(t)$ and $\hps(s) \coloneqq \psi(t)$ for any $t \in \tau^{-1}(s)$. Since $\vp$, $\psi$ are constant on the fibers $\tau^{-1}(s)$ of $\tau$, we obtain well-defined functions 
$\hvp \colon [0,1] \to [0,1]$ and $\hps \colon [0,1] \to [0,1]$ such that
\[
\vp(t) = \hvp(\tau(t)) \quad \text{and} \quad \ps(t) = \hps(\tau(t)) \quad \text{for all } t \in [0,1].
\]
By construction, $\g\circ\vp = \sigma\circ \psi$, so also $\g(\hvp(\tau(t))) = \s(\hps(\tau(t)))$ for all $t\in [0,1]$.
Surjectivity of $\tau$ then gives $\g\circ \hvp = \sigma\circ \hps$. Also, since $\varphi$, $\psi$ are surjective, so are $\hvp$ and $\hps$.

Since $\tau$ is a continuous map from a compact space ($[0,1]$) to a Hausdorff space (again $[0,1]$), it is a quotient map. By the universal property of quotient maps, since $\vp$ and $\ps$ are continuous and constant on the fibers of $\tau$, the induced maps $\hvp$ and $\hps$ are also continuous.

It remains to show strict monotonicity, which we do for $\hat\varphi$. Let $0 \le s_1 < s_2 \le 1$. We must show $\hvp(s_1) < \hvp(s_2)$.
Since $\tau$ is non-decreasing and surjective from $[0,1]$ to $[0,1]$, we can find $t_1, t_2 \in [0,1]$ such that $\tau(t_1) = s_1$, $\tau(t_2) = s_2$, and $t_1 < t_2$. Then
\[
\hat\varphi(s_1) = \varphi(t_1) \le \varphi(t_2) = \hat\varphi(s_2).
\]
Let us indirectly assume that equality holds: $\hvp(s_1) = \hvp(s_2)$. Then $\vp(t_1) = \vp(t_2)$ and since $\vp$ is non-decreasing, it must be constant on the entire interval $[t_1, t_2]$. By what was shown above, then $\ps$ must also be constant on $[t_1, t_2]$.
But then also $\tau(t)$ must be constant on $[t_1, t_2]$. In particular, $\tau(t_1) = \tau(t_2)$, which implies $s_1 = s_2$, a contradiction.
This proves that $\hvp$ is strictly monotonically increasing, as claimed.
\end{proof}
\end{appendices}

\section*{Acknowledgments}

The authors would like to thank Christina Sormani and Nathalie Tassotti for helpful discussions.

This research was funded in part by the Austrian Science Fund (FWF) [Grant DOI 10.55776/EFP6 and 10.55776/J4913]. For open access purposes, the authors have applied a CC BY public copyright license to any author accepted manuscript version arising from this submission.

%\bibliography{bibliography} 
%\bibliographystyle{acm}
%\addcontentsline{toc}{section}{References}
%\bibliographystyle{alpha}

\end{document}